\documentclass[12pt]{article}%
\usepackage{tikz}
\usepackage{enumitem}
\usepackage{graphicx}
\usepackage{float}
\usepackage{subcaption}
\usepackage{amsmath}
\usepackage{amsfonts}
\usepackage{amssymb}%
\setcounter{MaxMatrixCols}{30}
%TCIDATA{OutputFilter=latex2.dll}
%TCIDATA{Version=5.50.0.2960}
%TCIDATA{CSTFile=LaTeX article (bright).cst}
%TCIDATA{Created=Wed Apr 14 14:59:09 1999}
%TCIDATA{LastRevised=Sunday, June 28, 2020 14:31:26}
%TCIDATA{<META NAME="GraphicsSave" CONTENT="32">}
%TCIDATA{<META NAME="SaveForMode" CONTENT="1">}
%TCIDATA{BibliographyScheme=Manual}
%TCIDATA{<META NAME="DocumentShell" CONTENT="General\Blank Document">}
%TCIDATA{Language=American English}
%BeginMSIPreambleData
\providecommand{\U}[1]{\protect\rule{.1in}{.1in}}
%EndMSIPreambleData
\usetikzlibrary{calc}
\usetikzlibrary{intersections}
\newtheorem{theorem}{Theorem}[section]

\newtheorem{definition}[theorem]{Definition}
\newtheorem{example}[theorem]{Example}

\newtheorem{lemma}[theorem]{Lemma}

\newtheorem{problem}[theorem]{Problem}
\newtheorem{proposition}[theorem]{Proposition}
\newtheorem{remark}[theorem]{Remark}

\newenvironment{proof}[1][Proof]{\textbf{#1.} }{\ \rule{0.5em}{0.5em}}
\begin{document}

\author{Lawrence Reeves\\School of Mathematics and Statistics\\University of Melbourne\\Victoria 3010, Australia.\\email:lreeves@unimelb.edu.au
\and Peter Scott\\Mathematics Department\\University of Michigan\\Ann Arbor, Michigan 48109, USA.\\email:pscott@umich.edu
\and Gadde A.~Swarup\\718 High Street Road\\Glen Waverley\\Victoria 3150, Australia.\\email: anandaswarupg@gmail.com }
\title{Comparing decompositions of Poincar\'{e} duality pairs}
\maketitle

\begin{abstract}
Analogues of JSJ decompositions were developed for Poincar\'{e} duality pairs
in \cite{SS04}. These decompositions depend only on the group. Our focus will
be on describing the edge splittings of these decompositions more precisely.
We use our results to compare these decompositions with two other closely
related decompositions.

\end{abstract}
\date{}

\section{Introduction}

In this paper, we consider algebraic analogues of previous work in the
topology of $3$--manifolds related to the JSJ\ decomposition introduced by
Jaco and Shalen \cite{JacoShalen} and Johannson \cite{Johannson}. In
\cite{JacoShalen} and \cite{Johannson}, the authors considered a compact
orientable Haken $3$--manifold $M$ with incompressible boundary, and
constructed the characteristic submanifold $V(M)$ as a maximal Seifert pair
embedded in $M$. The frontier of $V(M)$ is a family of disjoint essential
annuli and tori in $M$, which decompose $M$ into pieces either in $V(M)$ or
its complement. In \cite{NS}, \cite{Sc1}, \cite{Sc2}, \cite{SS02} and
\cite{SS05}, the emphasis turned to annuli and tori rather than Seifert pairs.
In \cite{NS}, the authors gave a new approach to constructing this
decomposition of $M$ in which embedded essential annuli and tori were the main
subject of interest. They defined an embedded essential annulus or torus in
$M$ to be \textit{canonical} if it can be isotoped to be disjoint from any
other embedded essential annulus or torus in $M$. This led to a finer
decomposition of $M$, which they called the Waldhausen decomposition
(W-decomposition), from which the JSJ decomposition can be obtained in a
natural way. In \cite{SS02}, the interest was in possibly singular essential
annuli and tori in $M$. The authors defined an embedded essential annulus or
torus in $M$ to be \textit{topologically canonical} if it has intersection
number zero with any (possibly singular) essential annulus or torus in
$(M,\partial M)$. Most canonical annuli and tori in a $3$--manifold are also
topologically canonical, and the exceptions can be precisely described. The
existence of these exceptions explains why the W-decomposition of $M$ is in
general finer than the JSJ decomposition. In \cite{SS02}, the authors also
defined an algebraic analogue in which a splitting of $\pi_{1}(M)$ given by an
essential embedded annulus or torus in $M$ is \textit{algebraically canonical}
if it has intersection number zero with any almost invariant subset of
$\pi_{1}(M)$ which is over $\mathbb{Z}$ or $\mathbb{Z}\times\mathbb{Z}$. (See
\cite{SS01} for a discussion of the idea of intersection numbers of almost
invariant sets.) They showed that topologically canonical splittings are not
quite the same as algebraically canonical ones, and gave some examples to
demonstrate this. If $M$ has empty boundary, there is no difference.

In \cite{SS05}, (which is a revised version of \cite{SS04}), an analogue of
the JSJ decomposition of $3$--manifolds was developed for orientable $PD(n+2)$
pairs, with $n\geq1$. The decomposition for a $PD(n+2)$ pair $(G,\partial G)$
is simply the decomposition $\Gamma_{n,n+1}(G)$ of \cite{SS03}, which is
defined for many almost finitely presented groups $G$. (If $\partial G$ is
empty, so that $G$ is a $PD(n+2)$ group, then $\Gamma_{n,n+1}(G)$ is equal to
the decomposition $\Gamma_{n+1}(G)$.) See \cite{SS03errata} for corrections to
\cite{SS03}. Thus this decomposition depends only on the group $G$ and not on
$\partial G$. In the case when $\partial G$ is empty, Kropholler
\cite{Kropholler} had earlier obtained this decomposition. In \cite{SS03}, the
decomposition $\Gamma_{n,n+1}(G)$ was constructed as the regular neighbourhood
of the family consisting of all almost invariant (a.i.) subsets of $G$ over
$VPCn$ subgroups together with all a.i. subsets of $G$ over $VPC(n+1)$
subgroups which do not cross any a.i. subset of $G$ over a $VPCn$ subgroup. (A
group is $VPCn$ if it is virtually polycyclic of Hirsch length $n$.) This
regular neighbourhood is reduced, meaning that two adjacent vertices cannot
both be isolated, except when the graph is a loop with just these two
vertices. (A vertex $w$ of a $G$--tree $T$ is called \emph{isolated} if it has
valence $2$, and these two edges have the same stabilizer as $w$. The image of
$w$ in $G\backslash T$ is also called \emph{isolated}.) In general, the edge
groups of $\Gamma_{n,n+1}$ may not even be finitely generated, but it is shown
in \cite{SS05} that, in the case of $PD(n+2)$ pairs, the edge groups are all
either $VPCn$ or $VPC(n+1)$.

It was shown in \cite{SS05} that if $G$ is the fundamental group of a compact
orientable Haken $3$--manifold $M$ with incompressible boundary, then
$\Gamma_{1,2}(G)$ differs from the JSJ decomposition of $M$ only in some small
Seifert pieces which have no crossing annuli or tori. One can easily move from
$\Gamma_{1,2}(G)$ to its completion $\Gamma_{1,2}^{c}(G)$, and this completion
corresponds to the JSJ decomposition of $M$. The difference between
$\Gamma_{1,2}(G)$ and its completion $\Gamma_{1,2}^{c}(G)$ is related to
special properties of small Seifert fibre spaces. If the boundary of $M$ is
empty, then similar comments apply to $\Gamma_{2}(G)$ and its completion
$\Gamma_{2}^{c}(G)$. In the general case of a $PD(n+2)$ pair $(G,\partial G)$
we denote the completion of $\Gamma_{n,n+1}(G)$ by $\Gamma_{n,n+1}^{c}(G)$ or
just $\Gamma_{n,n+1}^{c}$ when the group $G$ is clear from the context. The
completion $\Gamma_{n,n+1}^{c}(G)$ is essentially obtained from $\Gamma
_{n,n+1}(G)$ by re-labelling some $V_{1}$--vertices as $V_{0}$--vertices and
then adding isolated $V_{1}$--vertices as needed to keep the graph bipartite.
In particular, the edge splittings of $\Gamma_{n,n+1}$\ are the same as the
edge splittings of $\Gamma_{n,n+1}^{c}$. Again the completed decomposition
depends only on the group $G$ and not on $\partial G$.

Our focus in this paper will be on the edge splittings of these decompositions
of a group $G$. This is closely related to the approach in \cite{SS02} in the
case of $3$--manifolds. Our main result, Theorem \ref{mainresult}, is similar
to that in \cite{SS02} but is in the setting of the decomposition
$\Gamma_{n,n+1}(G)$ of a $PD(n+2)$ pair $(G,\partial G)$. Our result is more
detailed than that in \cite{SS02}, and gives a precise description of the
special cases which arise. These results are new even in the setting of
$3$--manifolds, and they yield a substantial refinement of the results in
\cite{SS02}.

There are other natural approaches to finding JSJ\ decompositions of a
$PD(n+2)$ pair $(G,\partial G)$. The analogue of \cite{NS} would be to
consider a maximal family of splittings of $G$ by annuli and tori which cross
no other such splitting. (In this paper, we use the word "cross" to mean "has
non-zero intersection number with".) Another approach would be simply to
consider the regular neighbourhood of the family of all almost invariant
subsets of $G$ which are over $VPCn$ or $VPC(n+1)$ subgroups. For general
groups, neither of these decompositions need exist. However, in this paper, we
use the results of \cite{SS05} and of Theorem \ref{mainresult} to show that
both decompositions exist in the setting of Poincar\'{e} duality pairs, and we
compare these three different decompositions. The differences between them
leads to a detailed study of various small $2$--dimensional orbifolds and
fibrations over them by $VPCn$ groups. We think that this clarifies how these
various natural decomposition come about. We also discuss the special case of
$PD3$ pairs where the descriptions are somewhat simpler. This seems to make an
analogue of Johannson's Deformation Theorem possible for $PD3$ pairs, and also
seems relevant to some questions raised by Wall in sections 6 and 10 of
\cite{Wall2}.

In section \ref{section:prelim}, we describe the notions of annuli, tori, and
their associated almost invariant sets as in \cite{SS05}. We will also recall
some constructions and results from \cite{SS05} which we will use. In section
\ref{section:examples}, we describe examples generalizing those given in
\cite{SS02}. We also completely characterize all such examples. In section
\ref{section:mainresult}, we prove our main result, Theorem \ref{mainresult}.
In section \ref{section:comparisons}, we compare three different versions of
JSJ decompositions of $PD$--pairs. In section \ref{section:dimension3}, we
discuss how our results become substantially simpler when applied to the case
of $PD3$ pairs, and we compare our results with those in \cite{NS}. In section
\ref{section:relatedquestions}, we discuss some related questions.

We will also use earlier definitions and results from \cite{BE01},
\cite{BE02}, \cite{Brown} and \cite{Kropholler}. There are two survey articles
from around 2000, \cite{Davis} and \cite{Wall2}, which contain a number of
problems related to $PD$ groups and pairs.

\section{Preliminaries\label{section:prelim}}

We will consider orientable $PD(n+2)$ pairs $(G,\partial G)$ and first
describe, following \cite{SS05}, annuli and tori in $(G,\partial G)$ and their
associated almost invariant sets.

Let $H$ be a $VPC(n+1)$ subgroup of $G$. Note that as $G$ is a $PD\ $group, it
is torsion free. Hence $H$ is also torsion free, and so is a $PD(n+1)$ group.
The double $DG$ of $G$ is an orientable $PD(n+2)$ group, and so the pair
$(DG,H)$ has two ends if $H$ is orientable, and only one end otherwise. In the
first case, $DG$ contains two complementary nontrivial $H$--almost invariant
subsets $X$ and $X^{\ast}$, and any nontrivial $H$--almost invariant subset of
$DG$ is equivalent to one of these. Let $Y$ denote the intersection $X\cap G$.
Thus $Y$ and its complement $Y^{\ast}$ in $G$ are $H$--almost invariant
subsets of $G$. Further they are nontrivial unless $H$ is peripheral in
$(G,\partial G)$, i.e. $H$ is conjugate into a group in $\partial G$. We say
that $Y$ is \textit{dual to} $H$. If $H$ is an orientable $VPC(n+1)$ subgroup
of $G$, we call it a torus in $G$. Note that the $H$--almost invariant set $Y$
dual to $H$ is automatically adapted to $\partial G$. Conversely, suppose that
$H$ is a $VPC(n+1)$ subgroup of $G$, and $Y$ is a nontrivial $H$--almost
invariant subset of $G$ which is adapted to $\partial G$. Then $Y$ extends to
a nontrivial $H$--almost invariant subset of $DG$. It follows that $H$ must be
orientable and hence a torus in $G$.

The case of annuli requires more work. An annulus in a $PD(n+2)$ pair is a
certain type of orientable $PD(n+1)$ pair. We need to consider two types of
annulus. One type is $\Lambda_{H}=(H,\{H,H\})$, where $H$ is an orientable
$PDn$ group which is also $VPCn$. We call this an untwisted annulus. The other
type is $\Lambda_{H}=(H,H_{0})$, where $H$ is a non-orientable $PDn$ group
which is $VPCn$, and $H_{0}$ is the orientation subgroup of $H$. We call this
a twisted annulus. Corresponding to these, we have $K(\pi,1)$ spaces which we
denote by $(A,\partial A)$. Similarly, we denote the $K(\pi,1)$ pair
corresponding to $(G,\partial G$) by $(M,\partial M)$. Note that when $n=1$,
the only $PD1$ group is $\mathbb{Z}$, and this is orientable. Thus twisted
annuli do not appear in the theory of $3$--manifolds. For simplicity, we
assume that $G$ is finitely presented so that we can identify certain
cohomology groups of $G$ with the cohomology groups with compact supports of
various covers of $M$. In the general case, when $G$ is almost finitely
presented, we have to take the appropriate `finitely supported' cohomology
groups of covers of $M$.

An annulus in $(G,\partial G)$ is an injective homomorphism of group pairs
$\Theta:\Lambda_{H}\rightarrow(G,\partial G)$. This means that $\Theta$ maps
$H$ to $G$ and also maps each group in $\partial\Lambda_{H}$ to a conjugate of
some group in $\partial G$. Such $\Theta$ induces a continuous map
$\theta:(A,\partial A)\rightarrow(M,\partial M)$. Note that in the untwisted
case, such a map $\theta$ is determined up to homotopy by choosing a copy of
$H$ in two conjugates of groups in $\partial G$, such that the two copies of
$H$ are conjugate in $G$. And in the twisted case, $\theta$ is determined up
to homotopy by choosing a copy of $H$ in $G$ and a conjugate of some group in
$\partial G$ such that the intersection of $H$ with this conjugate contains
$H_{0}$. Thus an annulus can be thought of purely algebraically. We call the
annulus `essential' if $\theta$ cannot be homotoped relative to $\partial A$
into $\partial M$. It is clear that the essentiality of an annulus is also a
purely algebraic property. An untwisted annulus is essential if and only if
the images of the two boundary groups are not conjugate in a group in
$\partial G$. And a twisted annulus is essential if and only if $H_{0}$ lies
in a boundary group $K$ in $\partial G$, and $H\cap K=H_{0}$.

We next show how to associate an almost invariant set to an essential annulus.
Consider the lift $\theta_{H}:(A,\partial A)\rightarrow(M_{H},\partial M_{H})$
to the cover $M_{H}$ of $M$ with fundamental group $H$. Let $S$ be the
component of $\partial M_{H}$ containing $\theta_{H}(\partial_{0}A)$, where
$\partial_{0}A$ is one specified component of $\partial A$ in the untwisted
case and is $\partial A$ in the twisted case.

In the untwisted case, since $\theta_{H}(A)$ cannot be homotoped rel $\partial
A$ into $\partial M$, the other component $\partial_{1}A$ must be mapped by
$\theta_{H}$ into some other component $T$ of $\partial M_{H}$. Thus both $S$
and $T$ have fundamental groups isomorphic to $H$ and the images of the
fundamental cycle of $H$ generate $H_{n}(S)$ and $H_{n}(T)$, both with
$\mathbb{Z}$ and $\mathbb{Z}_{2}$ coefficients. Let $[A]$ denote the
fundamental cycle of $A$ in $H_{n+1}(A,\partial A)$. Then in the boundary map
\[
H_{n+1}(M_{H},\partial M_{H})\rightarrow H_{n}(\partial M_{H})\simeq
H_{n}(S)\oplus H_{n}(T)\oplus\cdots
\]
we see that the projection of the image of $[A]$ to each of the first two
direct summands of $H_{n}(\partial M_{H})$ is a generator.

In the twisted case, $\pi_{1}(S)$ must be isomorphic to $H_{0}$ since $\theta$
cannot be homotoped into $\partial M$ relative to $\partial A$. Thus the
projection of the image of $[A]$ to the summand $H_{n}(S)$ is a generator. So,
with $\mathbb{Z}$ or $\mathbb{Z}_{2}$ coefficients, we have $(\theta
_{H})_{\ast}([A])$ is non-zero in $H_{n+1}(M_{H},\partial M_{H})$. Also the
image with $\mathbb{Z}_{2}$--coefficients is the specialisation of the image
with $\mathbb{Z}$--coefficients. Denote these images by $\alpha$ and
$\bar{\alpha}$, and denote the duals of these images in $H_{c}^{1}%
(M_{H};\mathbb{Z})$ and $H_{c}^{1}(M_{H};\mathbb{Z}_{2})$ by $\beta$ and
$\bar{\beta}$. Again $\bar{\beta}$ is the specialisation of $\beta$.

Next, we want to relate $\bar{\beta}$ to the ends of the pair $(G,H)$. We have
that $H$ is of infinite index in $G$, so that $H_{f}^{0}(M_{H};\mathbb{Z}%
_{2})=0$ (finite cohomology is used as in section 7.4 of \cite{Brown}). Thus
$H_{f}^{1}(M_{H};\mathbb{Z}_{2})$ fits into the exact sequence:
\[
0\rightarrow\mathbb{Z}_{2}\rightarrow H_{e}^{0}(M_{H};\mathbb{Z}%
_{2})\rightarrow H_{f}^{1}(M_{H};\mathbb{Z}_{2})\overset{\overline
{r}}{\rightarrow}H^{1}(M_{H};\mathbb{Z}_{2}).
\]
Here $H_{e}^{0}$ is the $0$--the cohomology of the space of ends and the last
map is the restriction map from finite cohomology to ordinary cohomology.
Group theoretically, the above sequence identifies with the following. Let
$P[H\backslash G]$ denote the power set of right cosets $Hg$ of $H$ in $G$,
and let $E[H\backslash G]$ denote $P[H\backslash G]/\mathbb{Z}_{2}[H\backslash
G]$. The above sequence can be identified with
\[
0\rightarrow\mathbb{Z}_{2}\rightarrow H^{0}(G;E[H\backslash G])\overset{\delta
}{\rightarrow}H^{1}(G;\mathbb{Z}_{2}[H\backslash G])\overset{\overline
{r}}{\rightarrow}H^{1}(H;\mathbb{Z}_{2}).
\]
Thus $\bar{\beta}$ gives an element of $H^{1}(G;\mathbb{Z}_{2}[H\backslash
G])$ which we continue to denote by $\bar{\beta}$. In order to show that this
element gives a nontrivial almost invariant set, we need to know that it is
non-zero in $H^{1}(G;\mathbb{Z}_{2}[H\backslash G])$, and is in the kernel of
$\bar{r}$. We already know that $\bar{\beta}$ is non-zero since we started
with an essential annulus. Thus it remains to show that $\bar{r}(\bar{\beta
})=0$. Consider the following diagram:
\[%
\begin{array}
[c]{ccc}%
H^{1}(G;\mathbb{Z}[H\backslash G]) & \overset{r}{\rightarrow} & H^{1}%
(H;\mathbb{Z})\\
\downarrow\rho &  & \downarrow\rho\\
H^{1}(G;\mathbb{Z}_{2}[H\backslash G]) & \overset{\overline{r}}{\rightarrow} &
H^{1}(H;\mathbb{Z}_{2})
\end{array}
\]

In Theorem 2 of \cite{Swarup}, Swarup showed that $r$ is the zero map. Since
$\bar{\beta}=\rho(\beta)$ it follows that $\bar{r}(\bar{\beta})=0$, although
in general $\bar{r}$ is not the zero map. Thus we see that $\bar{\beta}%
=\delta(e)$ for some element $e$ of $H^{0}(G;E[H\backslash G])$. Since the
kernel of $\delta$ is just $\mathbb{Z}_{2}$, the element $e$ defines a
nontrivial $H$--almost invariant subset $Y$ of $G$ up to equivalence and
complementation. This completes our association of an almost invariant set $Y$
with an essential annulus $\theta$. We say that $Y$ is dual to $\theta$. It
turns out that given a nontrivial almost invariant subset $X$ of $G$ which is
over a $VPCn$\ group $H$, there is a subgroup $H^{\prime}$ of finite index in
$H$ such that $X$ is a finite sum of almost invariant sets over $H^{\prime}$
each dual to an annulus.

For future reference, we give some more terminology. As usual $(G,\partial G)$
is an orientable $PD(n+2)$ pair. If the almost invariant subset of $G$ dual to
an essential annulus or torus is associated to a splitting $\sigma$, we will
say that $\sigma$ is \textit{dual to} the same essential annulus or torus. If
$\Gamma$ is a graph of groups structure for $G$, nd $v$ is a vertex of
$\Gamma$, then an essential annulus or torus in $(G,\partial G)$ is enclosed
by $v$ if the dual almost invariant subset of $G$ is enclosed by $v$. Finally
if $\theta$ and $\phi$ are each an essential annulus or torus in $(G,\partial
G)$, we will say that $\theta$ and $\phi$ cross if the dual almost invariant
subsets of $G$ cross.

In \cite{SS03}, the authors considered an almost finitely presented group $G$
and an integer $n$ such that $G$ has no nontrivial almost invariant subsets
over $VPCk$ subgroups for $k<n$. Then it was shown that the family
$\mathcal{F}_{n,n+1}$ of all equivalence classes of almost invariant subsets
of $G$ over $VPCn$\ groups, and all $n$-canonical almost invariant subsets
over $VPC(n+1)$ groups has an algebraic regular neighbourhood, denoted
$\Gamma_{n,n+1}(G)$. In this setting a $H$--almost invariant subset of $G$ is
$n$--canonical if it does not cross any almost invariant subset over a
$VPCn$\ subgroup. If $(G,\partial G)$ is a $PD(n+2)$ pair, it was shown by
Kropholler and Roller (Lemma 4.3 of \cite{Kropholler}) that $G$ has no
nontrivial almost invariant subsets over $VPCk$ subgroups for $k<n$, so that
the decomposition $\Gamma_{n,n+1}(G)$ exists. In \cite{SS05}, the authors
showed that almost invariant subsets of $G$ over $VPC(n+1)$ subgroups which do
not cross any almost invariant subset over a $VPCn$ subgroup are automatically
adapted to $\partial G$. Further, if we enlarge the family $\mathcal{F}%
_{n,n+1}$ to include all almost invariant sets over $VPC(n+1)$ subgroups which
are adapted to $\partial G$, the new family $\mathcal{G}_{n,n+1}$ has the same
regular neighbourhood $\Gamma_{n,n+1}(G)$.

If $M$ is a compact orientable Haken $3$--manifold with incompressible (i.e.
$\pi_{1}$--injective) boundary, the characteristic submanifold $V(M)$ of $M$
is a compact submanifold whose frontier consists of incompressible annuli and
tori in $M$. This decomposition of $M$ into pieces is called the JSJ
decomposition. The components of $V(M)$ are Seifert fibre spaces or
$I$--bundles. Cutting $M\ $along the frontier of $V(M)$ yields a graph of
groups structure $\Gamma(M)$ for $G=\pi_{1}(M)$ whose edge groups are
isomorphic to $\mathbb{Z}$ or to $\mathbb{Z}\times\mathbb{Z}$. This graph is
bipartite as each component of the frontier of $V(M)$ lies in the boundary of
a component of $V(M)$ and a component of the complement. In \cite{SS05}, the
authors showed that if $(G,\partial G)$ is a $PD(n+2)$ pair, then
$\Gamma_{n,n+1}(G)$ has many properties in common with $\Gamma(M)$, with
$V_{0}$--vertices of $\Gamma_{n,n+1}(G)$ corresponding to the components of
$V(M)$. For the complete details the reader is referred to \cite{SS05}, but we
will need to recall some of the definitions for use in this paper.

In \cite{SS05}, an important part is played by groups which are $VPCn$%
--by--Fuchsian. Such a group has a $VPCn$ normal subgroup whose quotient is
the orbifold fundamental group of a compact $2$--orbifold. Further the
quotient is assumed not to be virtually cyclic. A $V_{0}$--vertex $v$ of
$\Gamma_{n,n+1}$ such that $G(v)$ is $VPCn$--by--Fuchsian corresponds to a
component $W$ of $V(M)$ which is a Seifert fibre space. Topologically there
are different cases depending on how $W$ meets $\partial M$, and extra
conditions are imposed on the edges of $\Gamma_{n,n+1}(G)$ which are incident
to $v$ to reflect this. This is what is meant by saying that $v$ is \textit{of
Seifert type}. If $v$ is a $V_{0}$--vertex of $\Gamma_{n,n+1}$ such that
$G(v)$ is $VPCn$--by--$\pi_{1}^{orb}(X)$, where $X$ is a $2$--orbifold with
virtually cyclic fundamental group, we say that $v$ is \textit{of solid torus
type} if $\pi_{1}^{orb}(X)$ is finite, and \textit{of torus type} otherwise.
The terminology reflects the type of the corresponding components of $V(M)$.
Again conditions need to be imposed on the edges of $\Gamma_{n,n+1}(G)$ which
are incident to $v$.

There are some other important special cases. In \cite{SS05}, the authors
defined a $V_{1}$--vertex of $\Gamma_{n,n+1}(G)$ to be of \textit{special
Seifert type }if it has only one incident edge $e$ which is dual to an
essential torus, and $G(e)$ is of index $2$ in $G(v)$. Also a $V_{1}$--vertex
of $\Gamma_{n,n+1}(G)$ is of \textit{special solid torus type}, if $v$ is of
solid torus type and does not enclose any crossing annuli. In Lemma 8.5 of
\cite{SS05}, the authors gave a complete list of possible such vertices. The
authors also considered the completion $\Gamma_{n,n+1}^{c}$\ of $\Gamma
_{n,n+1}$. This is obtained from $\Gamma_{n,n+1}$ by re-labelling as $V_{0}%
$--vertices those $V_{1}$--vertices of special Seifert type or of special
solid torus type, then adding isolated $V_{1}$--vertices to keep the graph
bipartite. If the result is not reduced, we reduce it by collapsing edges.

Now the main theorem of \cite{SS05} can be stated.

\begin{theorem}
[Theorem 3.14 of \cite{SS05}]\label{thm21} Let $(G,\partial G)$ be an
orientable $PD(n+2)$ pair such that $G$ is not $VPC$. Let $\mathcal{F}%
_{n,n+1}$ denote the family of equivalence classes of all nontrivial almost
invariant subsets of $G$ which are over a $VPCn$ subgroup, together with the
equivalence classes of all $n$--canonical almost invariant subsets of $G$
which are over a $VPC(n+1)$ subgroup. Finally let $\Gamma_{n,n+1}$ denote the
reduced algebraic regular neighbourhood of $\mathcal{F}_{n,n+1}$ in $G$, and
let $\Gamma_{n,n+1}^{c}$ denote the completion of $\Gamma_{n,n+1}$. Thus
$\Gamma_{n,n+1}$ and $\Gamma_{n,n+1}^{c}$ are bipartite graphs of groups
structures for $G$, with vertices of $V_{0}$--type and of $V_{1}$--type.

Then $\Gamma_{n,n+1}$ and $\Gamma_{n,n+1}^{c}$ have the following properties:

\begin{enumerate}
\item Each $V_{0}$--vertex $v$ of $\Gamma_{n,n+1}$ satisfies one of the
following conditions:

\begin{enumerate}
\item $v$ is isolated, and $G(v)$ is $VPC$ of length $n$ or $n+1$, and the
edge splittings associated to the two edges incident to $v$ are dual to
essential annuli or tori in $G$.

\item $v$ is of $VPC(n-1)$--by--Fuchsian type, and is of $I$--bundle type.

\item $v$ is of $VPCn$--by--Fuchsian type, and is of interior Seifert type.

\item $v$ is of commensuriser type. Further $v$ is of Seifert type, or of
torus type, or of solid torus type.
\end{enumerate}

\item The $V_{0}$--vertices of $\Gamma_{n,n+1}^{c}$ obtained by the completion
process are of special Seifert type or of special solid torus type.

\item Each edge splitting of $\Gamma_{n,n+1}$ and of $\Gamma_{n,n+1}^{c}$ is
dual to an essential annulus or torus in $G$.

\item Any nontrivial almost invariant subset of $G$ over a $VPC(n+1)$ group
and adapted to $\partial G$ is enclosed by some $V_{0}$--vertex of
$\Gamma_{n,n+1}$, and also by some $V_{0}$--vertex of $\Gamma_{n,n+1}^{c}$.

\item If $H$ is a $VPC(n+1)$ subgroup of $G$ which is not conjugate into
$\partial G$, then $H$ is conjugate into a $V_{0}$--vertex group of
$\Gamma_{n,n+1}^{c}$.
\end{enumerate}
\end{theorem}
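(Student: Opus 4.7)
The plan is to invoke the general algebraic regular neighbourhood machinery of \cite{SS03} to produce $\Gamma_{n,n+1}$ as a reduced bipartite $G$--tree with enclosing $V_{0}$--vertices and cross-point $V_{1}$--vertices, and then to exploit the orientable $PD(n+2)$--pair structure of $(G,\partial G)$ to refine the abstract vertex classification and to identify each edge splitting as dual to an essential annulus or torus. The existence and bipartite structure are free from \cite{SS03} once one checks that $\mathcal{F}_{n,n+1}$ is an admissible family; admissibility is automatic because $G$ is assumed not $VPC$ and, by Kropholler--Roller, $G$ carries no nontrivial a.i. subsets over $VPCk$ for $k<n$.

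For part (3), I would combine the general fact that each edge group of $\Gamma_{n,n+1}$ is the stabilizer of some a.i. set in the family with the structural result (used throughout \cite{SS05}) that, in the $PD$ pair setting, edge groups remain $VPCn$ or $VPC(n+1)$. For a $VPC(n+1)$ edge group $H$, the dual a.i. set is automatically adapted to $\partial G$, so by the duality construction reviewed in the preliminaries, $H$ is orientable and hence a torus in $G$. For a $VPCn$ edge group $H$, I would recover the essential annulus from the dual a.i. set $Y$: the two ends of the pair $(G,H)$ (or the one end, in the twisted case) locate the two conjugates of boundary groups that carry the image of $H$, and essentiality follows from the nontriviality of $Y$ up to complementation and equivalence.

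The classification in part (1) is where the real work lies. I would begin with the dichotomy between vertices enclosing a single equivalence class (up to commensurator translates) and vertices enclosing a large family of mutually crossing a.i. sets. In the first case one reads off that $G(v)$ is the commensurizer of a $VPCn$ or $VPC(n+1)$ subgroup and obtains the Seifert, torus, or solid torus subcases of (d) from whether the associated $2$--orbifold quotient is infinite and non-toroidal, toroidal, or finite. In the second case, an algebraic analogue of the Torus Theorem forces a $VPC(n-1)$--by--Fuchsian or $VPCn$--by--Fuchsian structure on $G(v)$; the I--bundle versus interior Seifert distinction is decided by whether the Fuchsian boundary is fully absorbed into $\partial G$ via the peripheral structure of $(G,\partial G)$. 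The isolated case (a) is a degenerate byproduct of reducedness when a valence--$2$ vertex has both incident edges carrying $G(v)$; these are precisely the places where a single splitting of $G$ contributes two edges to $\Gamma_{n,n+1}$.

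Part (4) follows because \cite{SS05} shows that any a.i. set over a $VPC(n+1)$ subgroup adapted to $\partial G$ is $n$--canonical and hence lies in the enlarged family $\mathcal{G}_{n,n+1}$, whose regular neighbourhood coincides with $\Gamma_{n,n+1}$; enclosure is a defining property of the regular neighbourhood. Part (5) then follows: if $H$ is $VPC(n+1)$ and not peripheral, the duality construction of the preliminaries produces a nontrivial $H$--almost invariant subset $Y$ adapted to $\partial G$, which by (4) is enclosed by some $V_{0}$--vertex, forcing $H$ itself into a conjugate of that vertex group; passage to $\Gamma_{n,n+1}^{c}$ only relabels certain $V_{1}$--vertices and inserts isolated $V_{1}$--vertices, so it does not disturb this conjugacy statement. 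Part (2) is essentially definitional from the description of the completion process recalled before the theorem. The main obstacle throughout will be part (1): verifying that the list (a)--(d) is exhaustive and that the subcases of (d) exactly match the orbifold trichotomy requires a careful analysis of how $\partial G$ interacts with the enclosing vertex group, and it is here that the full strength of the Algebraic Torus Theorem and the commensurizer techniques of \cite{SS03} must be deployed.
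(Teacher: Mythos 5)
The statement you are asked to prove is quoted verbatim from \cite{SS05} (it is Theorem 3.14 there), and this paper does not prove it: the text merely states it as the main theorem of \cite{SS05} and then goes on to use it. There is therefore no proof in the present paper to compare your sketch against; any genuine evaluation would have to be against the argument in \cite{SS05} itself. Your outline is a plausible high-level account of the ingredients used there (regular neighbourhood existence from \cite{SS03} once the $PDk$-vanishing of Kropholler--Roller makes the family admissible; a commensuriser/Fuchsian dichotomy for $V_0$--vertices; the preliminary observation that adapted $VPC(n+1)$ a.i.\ sets are tori, and that $VPCn$ edge groups arise from annuli).

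One concrete issue worth flagging in your part (4) reasoning: you claim that ``\cite{SS05} shows that any a.i.\ set over a $VPC(n+1)$ subgroup adapted to $\partial G$ is $n$--canonical and hence lies in $\mathcal{G}_{n,n+1}$.'' The implication actually recorded in the preliminaries of the present paper (and used in the proof of Lemma~\ref{ThereisXnotadaptedtodGcrossinge}) is the opposite one: $n$--canonical implies adapted. Moreover the intermediate step is unnecessary, since $\mathcal{G}_{n,n+1}$ is by \emph{definition} the enlargement of $\mathcal{F}_{n,n+1}$ by all adapted $VPC(n+1)$ a.i.\ sets, so an adapted set is in $\mathcal{G}_{n,n+1}$ without any canonicity hypothesis. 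What actually requires proof is that $\mathcal{G}_{n,n+1}$ has the \emph{same} regular neighbourhood as $\mathcal{F}_{n,n+1}$, and your sketch takes this for granted. Similarly, in part (5) you treat passage from $\Gamma_{n,n+1}$ to its completion $\Gamma_{n,n+1}^{c}$ as a harmless relabelling, but the statement is asserted only for $\Gamma_{n,n+1}^{c}$ precisely because the relabelling of special Seifert and special solid torus $V_1$--vertices as $V_0$--vertices is what rescues the conjugacy conclusion in the borderline cases; your remark that the relabelling ``does not disturb'' the statement inverts the logical role the completion plays.
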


\begin{remark}
Recall that a vertex $w$ of a $G$--tree $T$ is called \emph{isolated} if it
has valence $2$, and these two edges have the same stabilizer as $w$. The
image of $w$ in $G\backslash T$ is also called \emph{isolated}.
\end{remark}

Notice that any vertex $v$ of $\Gamma_{n,n+1}$\ or $\Gamma_{n,n+1}^{c}$\ has
two types of "boundary" subgroups. The first type comes from the edge groups
of the decomposition and the family of all these subgroups will be denoted by
$\partial_{1}v$. The second type comes from the decomposition of $\partial G$
by edges of the decompositions and this family will be denoted by
$\partial_{0}v$. The first type gives us $PD(n+1)$\ pairs in $(G,\partial G)$,
namely annuli or tori, and the second type gives us $PD(n+1)$\ pairs which are
contained in $\partial G$. In the three-dimensional topological case,
$\partial_{0}v$ and $\partial_{1}v$ correspond to surfaces which can be
amalgamated to yield the boundary of the $3$--manifold $M(v)$ which
corresponds to $v$. But this boundary may be compressible, and so need not
yield a $PD3$--pair. In the general case we get a triple $(G(v);\partial
_{0}v,\partial_{1}v)$ which corresponds to a Poincar\'{e} triad (\cite{Wall1})
but this theory in the case of groups has not been worked out.

We should also discuss the reason for excluding $VPC$ groups from
consideration in Theorem \ref{thm21}. For simplicity we will consider the case
when $\partial G$ is empty, so that $G$ is a $PD(n+2)$ group. Thus
$\mathcal{F}_{n,n+1}$ consists of the equivalence classes of all almost
invariant subsets of $G$ which are over a $VPC(n+1)$ subgroup, so that
$\Gamma_{n,n+1}(G)=\Gamma_{n+1}(G)$. As $G$ is $VPC$ and $PD(n+2)$, it must be
$VPC(n+2)$. As $\partial G$ is empty, cases 1b) and 1d) of Theorem \ref{thm21}
cannot arise. Also as $G\ $is $VPC$, the condition of being of $VPCn$%
--by--Fuchsian type in case 1c) can never occur. It should be replaced by the
condition of being $VPCn$--by--$VPC2$, to have a statement with some chance of
holding. By the definition of $VPC$, any $VPC(n+2)$ group $G$ contains some
$VPC(n+1)$ subgroup, and hence must contain a torus $T$. If $G$ admits a
second torus $T^{\prime}$ which crosses $T$, then $\Gamma_{n+1}(G)$ consists
of a single $V_{0}$--vertex. But this vertex need not satisfy the modified
condition 1c) in the statement of Theorem \ref{thm21}. For example, in section
7 of \cite{Hillman}, the author gives two examples of torsion free $VPC4$
groups which are orientable $PD4$ groups, and do not contain any normal $VPC2$
subgroup. As these examples are finite extensions of $\mathbb{Z}^{4}$, they
contain many subgroups isomorphic to $\mathbb{Z}^{3}$, and hence many tori, so
that $\Gamma_{3}(G)$ consists of a single $V_{0}$--vertex, which cannot
satisfy condition 1a) or the modified condition 1c) in the statement of
Theorem \ref{thm21}. Note that torsion free $VPC3$ groups which are orientable
$PD3$ groups, do satisfy the modified version of Theorem \ref{thm21}. For any
such group is the fundamental group of a closed orientable $3$--manifold $M$
which admits a geometric structure modeled on $E^{3}$, $Nil$ or $Solv$. In the
first two cases, $M\ $is a Seifert fibre space, and $\Gamma_{2}(G)$ consists
of a single $V_{0}$--vertex of $VPC1$--by--$VPC2$ type. In the third case,
either $\Gamma_{2}(G)$ consists of a single isolated $V_{0}$--vertex and a
single isolated $V_{1}$--vertex joined by two edges, so that $\Gamma_{2}(G)$
is a loop, or $\Gamma_{2}(G)$ consists of a single isolated $V_{0}$--vertex,
joined to two $V_{1}$--vertices of special Seifert type.

We recall Definition 5.1 and Proposition 5.3 from \cite{SS05}.

\begin{definition}
An orientable $PD(n+2)$ pair $(G,\partial G)$ is \emph{atoroidal} if any
orientable $VPC(n+1)$ subgroup of $G$ is conjugate into one of the groups in
$\partial G$.
\end{definition}

\begin{proposition}
\label{prop23} Let $(G,\partial G)$ be an orientable atoroidal $PD(n+2)$ pair,
where $n\geq1$. Let $A$ and $B$ be $VPC(n+1)$ groups in $\partial G$, possibly
$A=B$. Let $S$ and $T$ be $VPCn$ subgroups of $A$ and $B$ respectively, and
let $g$ be an element of $G$ such that $gSg^{-1}=T$. Then one of the following holds:

\begin{enumerate}
\item $A$ and $B$ are the same element of $\partial G$, and $g\in A$.

\item $A$ and $B$ are distinct elements of $\partial G$, are the only groups
in $\partial G$, and $A=G=B$. Thus $(G,\partial G)$ is the trivial pair
$(G,\{G,G\})$.

\item $A$ and $B$ are the same element of $\partial G$. Further $A$ is the
only group in $\partial G$, and has index $2$ in $G$.
\end{enumerate}
\end{proposition}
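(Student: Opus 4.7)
The strategy is to encode the conjugation $gSg^{-1}=T$ as an untwisted annulus in $(G,\partial G)$ and then analyze the essentiality of this annulus, using the atoroidal hypothesis to eliminate all but the three listed configurations.

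I would first reduce to the orientable case by replacing $S$ with its orientation subgroup $S_{0}$ of index at most $2$, and $T$ with $gS_{0}g^{-1}$, so that $S$ and $T$ become orientable $PDn$ groups. The data $(S\subset A,\ T\subset B,\ gSg^{-1}=T)$ then defines an untwisted annulus $\Theta:\Lambda_{S}\to(G,\partial G)$ in the sense of Section~\ref{section:prelim}, whose two boundary copies of $S$ are conjugate in $G$ via the element $g$.

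If $\Theta$ is inessential, the algebraic essentiality criterion supplies some $C\in\partial G$ and some $c\in C$ (after conjugating $A$ and $B$ suitably into $C$) with $cSc^{-1}=T$. Then $h:=c^{-1}g$ normalizes $S$, and I would argue $h\in A$ by the following normalizer argument using atoroidality. If $h\notin A$, then $\langle S,h\rangle$ is a $VPC(n+1)$ subgroup of $G$ strictly containing $S$; after passing to its orientation subgroup, atoroidality forces it into some peripheral $VPC(n+1)$ subgroup $A'$. Since both $A$ and $A'$ contain (a conjugate of) $S$, a peripheral uniqueness statement of the form ``two peripheral $VPC(n+1)$ subgroups that share a $VPCn$ subgroup must agree up to conjugation'' shows $A'=A$ up to conjugation, giving $h\in A$ and hence $g=ch\in A$, which is case~1. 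The only way the peripheral uniqueness can fail in an atoroidal pair is precisely via the degenerate configurations producing cases~2 and~3, and tracking the failure identifies these cases exactly: if the two conjugacy classes of peripheral $VPC(n+1)$ subgroups become identified by having $A=B=G$ (with $A\neq B$ as elements of $\partial G$) we land in case~2, while if a single peripheral subgroup $A$ of index $2$ accommodates the ``outside'' conjugation by $g$ we land in case~3.

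If $\Theta$ is essential, the dual nontrivial almost invariant subset $Y$ of $G$ over $S$ combines with $A$ and $B$ to cut out a sub-$PD(n+2)$-pair of $(G,\partial G)$ built as a bundle over a compact $1$-dimensional orbifold with fibre a $PD(n+1)$ group containing $S$. Atoroidality is then applied to a ``long'' orientable $VPC(n+1)$ subgroup built from $S$ and a traversal of the annulus: such a subgroup must be peripheral, and the only way this is compatible with the annulus being essential is that the sub-pair exhausts all of $(G,\partial G)$, producing case~2 (two distinct peripheral copies of $G$) in the untwisted subcase and case~3 ($[G:A]=2$, $\partial G=\{A\}$) in the twisted subcase; the subcases are distinguished by whether the two ends of the annulus lie on distinct elements of $\partial G$ or on the same element. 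The main obstacle is this essential case together with the peripheral uniqueness statement above: both require careful cohomological bookkeeping of the orientations of the $VPC$ groups involved and of the peripheral embeddings of $A$ and $B$.
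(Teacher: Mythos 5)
The paper does not actually prove Proposition~\ref{prop23}: it is recalled verbatim from \cite{SS05} (it is Proposition~5.3 there), so there is no internal proof to compare against. Judged on its own terms, your proposal captures the right objects — the conjugation $gSg^{-1}=T$ does encode an annulus, and atoroidality is the right hypothesis to exploit — but the central steps are asserted rather than argued, and two of them have genuine holes.

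First, in the inessential case you claim that if $h=c^{-1}g$ normalises $S$ and $h\notin A$, then $\langle S,h\rangle$ is a $VPC(n+1)$ subgroup, so that atoroidality applies. This does not follow: if $h$ has finite order modulo $S$ (which is not excluded by $h\notin A$), then $\langle S,h\rangle$ is a torsion-free $VPCn$ group and the atoroidality hypothesis says nothing about it. You would need a separate argument ruling out such ``small'' roots of $S$ outside $A$, and that argument is exactly where the $PD$ structure and the cases~2 and~3 must appear. Second, the ``peripheral uniqueness statement'' (two peripheral $VPC(n+1)$ subgroups sharing a $VPCn$ subgroup agree up to conjugation) is the crux of the matter, and you simply posit it, adding that ``tracking the failure identifies these cases exactly.'' But proving that statement, and classifying how it can fail, is essentially the content of Proposition~\ref{prop23} itself; invoking it makes the argument circular. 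Third, in the essential case the construction is left completely vague: it is not clear what ``a `long' orientable $VPC(n+1)$ subgroup built from $S$ and a traversal of the annulus'' means. Unless $g$ normalises $S$ (which is the inessential situation), there is no natural $VPC(n+1)$ subgroup $\langle S,g\rangle$ to feed to the atoroidality hypothesis, and the alleged ``sub-$PD(n+2)$-pair \ldots\ built as a bundle over a compact $1$-dimensional orbifold'' is asserted without any construction. So while the high-level case division (does the conjugating element live in $A$ or not; do the two ends lie in the same element of $\partial G$ or not) correctly anticipates the shape of the conclusion, each branch relies on an unproved claim whose proof would itself require most of the Poincar\'e duality bookkeeping that the proposition encapsulates.
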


In \cite{SS05}, the above proposition was applied to the $V_{1}$--vertices of
the torus decomposition of a $PD(n+2)$ pair $(G,\partial G)$. More precisely,
let $V$ be a $V_{1}$--vertex of the torus decomposition $T_{n+1}(G,\partial
G)$, and let $K$ denote the associated group $G(V)$. Let $\partial_{1}K$
denote the family of subgroups of $K$ associated to the edges of
$T_{n+1}(G,\partial G)$ incident to $V$, let $\partial_{0}K$ denote the family
of subgroups of $K$ which lie in $\partial G$, and let $\partial K$ denote the
union $\partial_{1}K\cup\partial_{0}K$ of these two families. Then
$(K,\partial K)$ is an orientable atoroidal $PD(n+2)$ pair. Each group in
$\partial_{1}K$ is $VPC(n+1)$, so the above proposition can be applied to any
annulus in $(K,\partial K)$ with ends in $\partial_{1}K$.

We now generalize this idea to apply to $V_{1}$--vertices of $\Gamma
_{n,n+1}^{c}(G)$.

\begin{proposition}
\label{rem24}Let $(G,\partial G)$ be an orientable $PD(n+2)$ pair, where
$n\geq1$. Let $K$ be the group associated to a $V_{1}$--vertex $V$ of
$\Gamma_{n,n+1}^{c}(G)$, and let $\partial_{1}K$ denote the family of
subgroups of $K$ associated to the edges of $\Gamma_{n,n+1}(G)$ incident to
$V$. Let $A$ and $B$ be groups in $\partial_{1}K$, possibly $A=B$. Let $S$ and
$T$ be $VPCn$ subgroups of $A$ and $B$ respectively, and let $g$ be an element
of $K$ such that $gSg^{-1}=T$. Then one of the following holds:

\begin{enumerate}
\item $A$ and $B$ are the same element of $\partial_{1}K$, and $g\in A$.

\item $A$ and $B$ are distinct elements of $\partial_{1}K$, are the only
groups in $\partial_{1}K$, and $A=K=B$. Thus $V$ is an isolated $V_{1}%
$--vertex $V$ of $\Gamma_{n,n+1}^{c}(G)$.
\end{enumerate}
\end{proposition}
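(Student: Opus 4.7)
The plan is to reduce Proposition \ref{rem24} to Proposition \ref{prop23} by equipping the vertex group $K$ with a natural orientable atoroidal $PD(n+2)$ pair structure. Let $\partial_{0}K$ denote the family of subgroups of $K$ coming from $\partial G$ via the decomposition $\Gamma_{n,n+1}^{c}$, and set $\partial K=\partial_{0}K\cup\partial_{1}K$. The first step is to verify that $(K,\partial K)$ is an orientable $PD(n+2)$ pair. This should follow from the general Poincar\'{e} duality machinery for vertex groups of splittings of $PD$ pairs developed in \cite{SS05}: every vertex group of an annulus--torus decomposition of a $PD(n+2)$ pair inherits such a structure, with the peripheral family consisting of the original boundary pieces (giving $\partial_{0}K$) together with the annuli or tori dual to the incident edges (giving $\partial_{1}K$).

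The second step, which I expect to be the main obstacle, is to show $(K,\partial K)$ is atoroidal. Let $H\subseteq K$ be a $VPC(n+1)$ subgroup. Lift $V$ to a vertex $\tilde V$ of the Bass--Serre tree $T$ of $\Gamma_{n,n+1}^{c}$ with $\mathrm{Stab}(\tilde V)=K$, so that $H$ fixes $\tilde V$. By part $(5)$ of Theorem \ref{thm21}, some $G$-conjugate of $H$ lies either in a $V_{0}$-vertex group of $\Gamma_{n,n+1}^{c}$ or in a group of $\partial G$. In the first case, $H$ fixes both $\tilde V$ and a lift of a $V_{0}$-vertex; the bipartite structure of $T$ forces $H$ to fix the whole arc between them, so $H$ is conjugate within $K$ into an edge stabilizer incident to $\tilde V$, i.e.\ into a group in $\partial_{1}K$. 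In the second case, a parallel analysis of how each peripheral group of $G$ is carried by the tree shows $H$ is $K$-conjugate into $\partial_{0}K$.

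The third step is to apply Proposition \ref{prop23} to $(K,\partial K)$ with the data $A$, $B$, $S$, $T$, $g$. When $A$ and $B$ are both $VPC(n+1)$, its three conclusions translate as follows: conclusion $(1)$ is exactly conclusion $(1)$ of Proposition \ref{rem24}; conclusion $(2)$ forces $\partial_{0}K=\emptyset$ and $\partial_{1}K=\{A,B\}$ with $A=K=B$, so $V$ has valence $2$ with both edge stabilizers equal to $K$, which is precisely the condition for $V$ to be isolated; conclusion $(3)$ would force $V$ to have valence $1$ with a single incident edge whose group has index $2$ in $K$ and is dual to an essential torus, which is the definition of special Seifert type, contradicting the fact that the completion procedure relabels every such $V_{1}$-vertex of $\Gamma_{n,n+1}$ as a $V_{0}$-vertex in $\Gamma_{n,n+1}^{c}$. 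The remaining case, in which one of $A,B$ is only $VPCn$, is handled by the same Bass--Serre tree analysis (noting that $S$ then has finite index in $A$, so the conjugation $gSg^{-1}=T$ gives a commensuration between the edge groups), with the analogue of case $3$ excluded because such a configuration would force $V$ to be of special solid torus type, and hence again absent from $\Gamma_{n,n+1}^{c}$.
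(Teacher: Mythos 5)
Your plan to equip $(K,\partial K)$ with an orientable $PD(n+2)$ pair structure and then apply Proposition \ref{prop23} to it directly hits an obstacle that the paper explicitly flags. Splitting a $PD(n+2)$ pair along a torus is covered by Bieri--Eckmann, but splitting along an \emph{annulus} means cutting along a $PD(n+1)$ \emph{pair} whose group is only $VPCn$, and the result need not be a $PD(n+2)$ pair. The paper says this outright in the proof you were asked to reproduce: ``$(K,\partial K)$ need not be a $PD$--pair. This is because the groups in $\partial_{0}K$ and $\partial_{1}K$ need not be $PD$--groups.'' (Indeed, whether such a split yields $PD$ pairs is raised as an open problem in section \ref{section:relatedquestions}.) Since Proposition \ref{prop23} is stated for orientable atoroidal $PD(n+2)$ pairs, your steps two and three have no foundation as written. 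You also identified atoroidality as ``the main obstacle,'' but it is really the $PD$--pair structure that fails.

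The paper repairs exactly this defect by doubling before invoking Proposition \ref{prop23}. It forms the double $DK$ of $K$ along the non-torus groups of $\partial_{0}K$, takes $\partial DK$ to be the induced doubles of the groups in $\partial_{1}K$ together with the torus groups of $\partial_{0}K$, and cites Lemma 8.7 of \cite{SS05} to conclude that $(DK,\partial DK)$ is an orientable atoroidal $PD(n+2)$ pair with every group in $\partial DK$ honestly $VPC(n+1)$. Proposition \ref{prop23} is then applied inside $DK$, with $A$ and $B$ replaced by $A'$ and $B'$ (equal to $A$, $B$ if tori, and to $DA$, $DB$ if annuli), the same $S$, $T$, and the same $g\in K\subset DK$. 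This doubling also cleanly absorbs the case you deferred to the end, where $A$ or $B$ is a $VPCn$ annulus group: $A'=DA$ is $VPC(n+1)$, so the hypotheses of Proposition \ref{prop23} apply uniformly. Your translation of the three conclusions and the observation that case (3) is ruled out because $\Gamma_{n,n+1}^{c}$ contains no $V_{1}$--vertices of special Seifert or special solid torus type are correct; once the doubling step is inserted, the rest of your argument matches the paper's.
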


\begin{remark}
This result fails if we consider the uncompleted decomposition $\Gamma
_{n,n+1}(G)$. For example, if $V$ is a $V_{1}$--vertex of $\Gamma_{n,n+1}(G)$
of special Seifert type, or of solid torus type such that $V\ $has valence
$1$, and the edge group has index $2$ or $3$ in $G(V)$, then $V$ does not
satisfy either of the conclusions 1)-2).
\end{remark}

\begin{proof}
Let $\partial_{0}K$ denote the family of subgroups of $K$ coming from the
decomposition of $\partial G$ induced by the edge splittings of $\Gamma
_{n,n+1}^{c}(G)$. For later use, we note that any essential annulus in
$(G,\partial G)$ is enclosed by a $V_{0}$--vertex of $\Gamma_{n,n+1}^{c}(G)$.
Thus if an essential annulus $\Lambda$ in $(G,\partial G)$ is enclosed by the
$V_{1}$--vertex $V$ with associated group $K$, it cannot be essential in $K$.
This is because there is an edge $e$ of $\Gamma_{n,n+1}^{c}(G)$ incident to
$V$ such that the associated edge splitting is dual to an annulus
$\Lambda^{\prime}$ covered by $\Lambda$. Note that the group associated to $e$
lies in $\partial_{1}K$.

Now let $\partial K$ denote the union of the two families $\partial_{0}K$ and
$\partial_{1}K$. (Recall that groups in $\partial_{1}K$ are $PD(n+1)$\ pairs
in $(G,\partial G)$, and groups in $\partial_{0}K$ are $PD(n+1)$\ pairs which
are contained in $\partial G$.) The pair $(K,\partial K)$ is again atoroidal,
in the sense that any orientable $VPC(n+1)$ subgroup of $K$ is conjugate into
one of the groups in $\partial K$, but $(K,\partial K)$ need not be a
$PD$--pair. This is because the groups in $\partial_{0}K$ and $\partial_{1}K$
need not be $PD$--groups. Now we let $DK$ denote the double of $K\ $along the
family $\Sigma$ of groups in $\partial_{0}K$ which are not tori, and let
$\partial DK$ denote the family consisting of the induced double of
$\partial_{1}K$ together with the double of the family of torus groups in
$\partial_{0}K$. Note that as $\partial_{1}K$ consists of essential annuli and
tori in $(G,\partial G)$, each group in the induced double of $\partial_{1}K$
is $VPC(n+1)$. For the double of an essential annulus, this is proved in
section 2 of \cite{SS05}. Lemma 8.7 of \cite{SS05} tells us that $(DK,\partial
DK)$ is an orientable atoroidal $PD(n+2)$ pair.

Now we proceed as follows. The group $A$ in $\partial_{1}K$ yields the group
$A^{\prime}$ in $\partial DK$, where $A^{\prime}$ equals $A$ if $A$ is a
torus, and equals the double $DA$ of $A$ if $A$ is an annulus. Similarly the
group $B$ in $\partial_{1}K$ yields the group $B^{\prime}$ in $\partial DK$.
The $VPCn$ subgroups $S$ and $T$ of $A$ and $B$ are subgroups of $A^{\prime}$
and $B^{\prime}$ respectively, and the element $g$ of $K$ such that
$gSg^{-1}=T$ lies in $DK$. Now we apply Proposition \ref{prop23}, to obtain
one of the three cases listed there. Case 1) of Proposition \ref{prop23}
implies that case 1) of Proposition \ref{rem24} holds, and case 2) of
Proposition \ref{prop23} implies that case 2) of Proposition \ref{rem24}
holds. Finally case 3) of Proposition \ref{prop23} implies that either $V$ is
of special Seifert type or of special solid torus type. Neither case can occur
as such vertices cannot be $V_{1}$--vertices of $\Gamma_{n,n+1}^{c}(G)$. This
completes the proof of Proposition \ref{rem24}.
\end{proof}

\section{Examples of almost invariant sets\label{section:examples}}

The discussion in \cite{SS05} was mostly about almost invariant sets which are
adapted to the boundary. However, in \cite{SS02}, Scott gave examples of
almost invariant sets over orientable $VPC2$ subgroups of a $PD3$ pair which
are not adapted to the boundary. This gave rise to the concept of special
canonical torus which was used to show that the JSJ-decomposition of
orientable $3$--manifolds is algebraic, meaning that it depends only on the
fundamental group of the manifold, not the boundary. As discussed earlier, we
will say that an embedded essential annulus or torus in a $3$--manifold $M$
with incompressible boundary is \textit{topologically canonical} if it has
intersection number zero with any (possibly singular) essential annulus or
torus in $(M,\partial M)$. We will say that a splitting of $\pi_{1}(M)$ given
by an essential annulus or torus is \textit{algebraically canonical} if it has
intersection number zero with any almost invariant subset of $\pi_{1}(M)$
which is over $\mathbb{Z}$ or $\mathbb{Z}\times\mathbb{Z}$. See \cite{SS01}
for a discussion of the idea of intersection numbers. Now we recall Scott's example.

\begin{example}
[Scott's example]\label{scottexample}

This is Example 2.13 of \cite{SS02}. Let $F$ be an orientable surface with at
least two boundary components and let $C$ denote one of the boundary
components. Thus $\pi_{1}(F)$ is free, and $\pi_{1}(C)$ is a free factor of
$\pi_{1}(F)$. If the rank of $\pi_{1}(F)$ is at least $3$, then it is easy to
see that there is a nontrivial splitting of $\pi_{1}(F)$ as an amalgamated
free product over $\pi_{1}(C)$. Similar considerations apply to express
$\pi_{1}(F)$ as an HNN extension if it has rank $2$.

We now take two copies $F_{1}$, $F_{2}$ of $F$ and consider the two
$3$--manifolds $M_{i}=F_{i}\times S^{1}$, each with a boundary component
$T_{i}$ corresponding to $C_{i}\times S^{1}$. Form a $3$--manifold $M$ by
gluing the $M_{i}$'s along $T_{i}$ so that the fibrations do not match. The
resulting torus $T$ is a topologically canonical torus in the JSJ splitting of
$M$. If each $\pi_{1}(F_{i})$ has rank at least $3$, we have $\pi_{1}%
(M_{i})=A_{i}\underset{H_{i}}{\ast}B_{i}$, $i=1,2$, where $H_{i}=\pi_{1}%
(T_{i})$. If $G$ denotes $\pi_{1}(M)$, and $H$ denotes the subgroup
$H_{1}=H_{2}$, and $A=A_{1}\underset{H}{\ast}A_{2}$, $B=B_{1}\underset{H}{\ast
}B_{2}$, we have a splitting $G=A\underset{H}{\ast}B$ of $G$ that crosses the
splitting associated to $T$. Thus although $T$ is topologically canonical, it
is not algebraically canonical. Notice that embedded essential annuli in
$M_{1}$ and $M_{2}$, disjoint from $T$, yield splittings of $G$ over the
fibres of $M_{1}$ and $M_{2}$, so that $G$ also has splittings over
incommensurable cyclic subgroups of $H$.
\end{example}

We construct some more examples. Consider $M_{i}=T_{i}\times I\cup N_{i}$,
where $N_{i}$ is an orientable $3$--manifold attached to $T_{i}\times\{1\}$
along at least two disjoint annuli in $\partial N_{i}$. Now form $M$ by
identifying the $M_{i}$'s along $T_{i}\times\{0\}$, and let $T$ denote the
torus $T_{1}\times\{0\}=T_{2}\times\{0\}$. Assume that the annuli in
$T_{1}\times\{1\}$ and $T_{2}\times\{1\}$ used to construct $M_{1}$ and
$M_{2}$ carry incommensurable subgroups of $\pi_{1}(T)=H$. Thus $G=\pi_{1}(M)$
splits over incommensurable cyclic subgroups of $H$. Again, $T$ is a
topologically canonical torus in the JSJ decomposition of $M$. Now we form
$H$--almost invariant subsets of $G=\pi_{1}(M)$ as follows. Consider the cover
$M_{H}$ of $M$ with $\pi_{1}(M_{H})=H$, so that each $T_{i}\times I$ lifts to
$M_{H}$, and the pre-image $\widetilde{N_{i}}$ of each $N_{i}$ is
disconnected. Let $C_{i}$, $i=1,2$, be one of the annuli in this lift of
$T_{i}\times\{1\}$ used to construct $M_{i}$, and let $N_{i}^{\prime}$ be the
component of $\widetilde{N_{i}}$ attached to $C_{i}$. For $i=1,2$, let $X_{i}$
be the set of vertices of $M_{H}$ in $T_{i}\times\lbrack0,1]$ together with
the vertices in $N_{i}^{\prime}$. This gives us two sets $X_{1}$, $X_{2}$ on
different sides of $T$. To $X_{1}$ we add the vertices in $\widetilde{N_{2}%
}-N_{2}^{\prime}$ and to $X_{2}$ we add the vertices in $\widetilde{N_{1}%
}-N_{1}^{\prime}$ to obtain two $H$--almost invariant sets $Y_{1}$, $Y_{2}$.
Clearly $Y_{2}=Y_{1}^{\ast}$ and crosses the almost invariant set $X$
determined by $T$, namely $X$ consists of all vertices of $M_{H}$ on one side
of $T$.

We can mix the above two examples to get one manifold of each type on each
side of $T$. These types fall under the heading of $V_{0}$--vertices of
commensuriser type of Theorem \ref{thm21}. The first examples are special
cases of the so called peripheral Seifert type in \cite{SS05}, that is, those
Seifert type pieces in the decompositions $\Gamma_{n,n+1}$ and $\Gamma
_{n,n+1}^{c}$\ of a $PD(n+2)$ pair $(G,\partial G)$ which "intersect the
boundary". The second are called toral type 2), in which one component of
$\partial(T\times I)$ is an edge of the decomposition and the other boundary
component intersects $\partial G$ in a number of parallel annuli. Note that in
\cite{SS05}, the definition of Seifert type requires the base orbifold to have
fundamental group which is not virtually cyclic. The terminology torus type is
used if this base group is virtually cyclic. These examples suggest the
following definition.

\begin{definition}
\label{defnofspecialcanonicaltorus}Let $(G,\partial G)$ be an orientable
$PD(n+2)$ pair, such that $G$ is not $VPC$. A splitting of $G$ over a
$VPC(n+1)$ subgroup $H$ is called a \textbf{\emph{special canonical torus}} if
it has intersection number zero with any essential annulus in $(G,\partial
G)$, and $G$ splits over incommensurable $VPCn$ subgroups of $H$.
\end{definition}

\begin{remark}
In Proposition \ref{characterizespecialcanonical}, we will prove that a
special canonical torus in $G$ is an edge splitting of $\Gamma_{n,n+1}(G)$ or
$\Gamma_{n,n+1}^{c}(G)$, so that this concept depends only on $G$, and not on
$\partial G$. Recall that the edge splittings of $\Gamma_{n,n+1}(G)$ are the
same as those of $\Gamma_{n,n+1}^{c}(G)$. As these edge splittings are each
dual to an essential annulus or torus in $(G,\partial G)$, it follows that a
special canonical torus in $G$ is dual to an essential torus in $(G,\partial
G)$, which partly justifies the terminology. Note that if $\partial G$ is
empty, so that $G$ is a $PD(n+2)$ group, then $G\ $cannot split over a $VPCn$
subgroup. Thus special canonical tori do not exist in this case.
\end{remark}

Essentially the same definition was used in \cite{SS02}. However, the above
examples are not the only possibilities. Again let $M_{1}$ denote the
$3$--manifold $F_{1}\times S^{1}$, where $F_{1}$ is an orientable surface with
at least two boundary components. Also let $M_{2}$ denote the orientable
$3$--manifold which is a twisted $I$--bundle over the Klein bottle. Form a
$3$--manifold $M$ by gluing the boundary torus $T$ of $M_{2}$ to one of the
boundary tori of $M_{1}$. In this case there are two distinct Seifert
fibrations on $M_{2}$, reflecting the fact that the Klein bottle itself has
two distinct Seifert fibrations. But so long as the gluing is chosen not to
match the fibration of $M_{1}$ with either fibration of $M_{2}$, the torus $T$
will again be a topologically canonical torus in the JSJ splitting of $M$. It
will also not be algebraically canonical but will be a special canonical
torus. The easiest way to see these facts is to note that $M$ is double
covered by the union of two copies of $M_{1}$, glued along a boundary torus so
that their fibrations do not match. Thus $M\ $is double covered by one of
Scott's examples.

Next we need the following technical result.

\begin{lemma}
\label{paralleledges}Let $(G,\partial G)$ be an orientable $PD(n+2)$ pair such
that $G$ is not $VPC$, let $J$ and $J^{\prime}$ be $VPC(n+1)$ subgroups of
$G$, and let $f$ and $f^{\prime}$ be edges of the universal covering $G$--tree
$T$ of $\Gamma_{n,n+1}^{c}(G)$, with stabilizers $J\ $and $J^{\prime}$
respectively. If $J$ and $J^{\prime}$ are commensurable, then $J=J^{\prime}$
and one of the following cases holds:

\begin{enumerate}
\item $f=f^{\prime}$.

\item $f\cap f^{\prime}$ is an isolated vertex.

\item $f\cap f^{\prime}$ is a $V_{0}$--vertex $w$ of valence $2$ whose
stabiliser contains an element interchanging $f$ and $f^{\prime}$, and so
contains $J$ with index $2$.

\item There are consecutive adjacent edges $f,b,b^{\prime},f^{\prime}$ of $T$
such that $b\cap f$ and $b^{\prime}\cap f^{\prime}$ are isolated $V_{1}%
$--vertices, and $b\cap b^{\prime}$ is a $V_{0}$--vertex $w$ of valence $2$
whose stabiliser contains an element interchanging $b$ and $b^{\prime}$, and
so contains $J$ with index $2$.
\end{enumerate}
\end{lemma}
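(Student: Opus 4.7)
The plan is to reduce the statement to a local analysis of the unique geodesic $\gamma$ from $f$ to $f'$ in $T$, using Proposition~\ref{rem24} at each interior $V_{1}$-vertex and the vertex-type classification from Theorem~\ref{thm21} at each interior $V_{0}$-vertex. I would set $J_{0} = J \cap J'$, which by commensurability is a $VPC(n+1)$ subgroup of finite index in each of $J$ and $J'$. Since $J_{0}$ fixes both $f$ and $f'$, it fixes $\gamma$ pointwise, so every edge stabilizer and every vertex stabilizer along $\gamma$ contains $J_{0}$. I would then fix a $VPCn$ subgroup $L$ of $J_{0}$ once and for all; $L$ will serve as the common $VPCn$ subgroup in all applications of Proposition~\ref{rem24}.

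The core is a local analysis. At an interior $V_{1}$-vertex $v$ with the two adjacent geodesic edges $e_{1}, e_{2}$ and their stabilizers $A_{1}, A_{2}$, I would apply Proposition~\ref{rem24} first with $S = T = L$ and $g = 1$: its conclusion (1) would say that $A_{1}$ and $A_{2}$ are the same element of $\partial_{1}K_{v}$, in which case there is $k \in K_{v}$ with $k \cdot e_{1} = e_{2}$, and a second application of Proposition~\ref{rem24} with $g = k$ would force $k \in A_{1}$, contradicting $e_{1} \neq e_{2}$. Hence conclusion (2) must apply and $v$ is isolated, so both incident geodesic edges are stabilized by $K_{v}$ and $v$ has valence $2$ in $T$. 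At an interior $V_{0}$-vertex $u$ of $\gamma$, the vertex-type classification of Theorem~\ref{thm21}, combined with the atoroidal-pair reasoning behind Proposition~\ref{rem24} (ultimately via Proposition~\ref{prop23}), forces $u$ to be either an isolated $V_{0}$-vertex of the type in Theorem~\ref{thm21}(1)(a) (valence $2$, $K_{u}$ a $VPC$ group, both adjacent edges stabilized by $K_{u}$), or a special $V_{0}$-vertex produced by the completion process (valence $2$, both adjacent edges sharing a common stabilizer of index $2$ in $K_{u}$, with a swap element in $K_{u}$). Non-isolated, non-special interior $V_{0}$-types are eliminated because two commensurable $VPC(n+1)$ edge stabilizers in $\partial_{1}K_{u}$ sharing a $VPCn$ subgroup would contradict the atoroidal dichotomy of Proposition~\ref{prop23}.

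Combining these local observations, consecutive geodesic edge stabilizers always coincide as subgroups of $G$, which immediately gives $J = J'$. The length of $\gamma$ is then bounded by a short combinatorial argument using reducedness: two adjacent isolated vertices cannot occur in $\Gamma_{n,n+1}^{c}$, so if $\gamma$ contains any interior $V_{1}$-vertex the adjacent $V_{0}$-vertex must be special, and the alternating pattern forces $\gamma$ to match one of the four listed configurations---$\gamma$ of length zero (case 1), a single shared isolated vertex (case 2), a single shared special $V_{0}$-vertex (case 3), or a four-edge chain of the form $f, b, b', f'$ with two isolated $V_{1}$-vertices flanking a central special $V_{0}$-vertex (case 4). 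The main obstacle will be this final enumeration: ruling out longer or intermediate alternations requires careful use of Lemma~8.5 of \cite{SS05} (classifying special Seifert and special solid torus vertices), the index-$2$ property appearing in cases 3 and 4, and the bipartiteness and reducedness of $\Gamma_{n,n+1}^{c}$.
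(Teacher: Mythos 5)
Your approach is a genuinely different one from the paper's. You propose a local, combinatorial analysis along the geodesic $\gamma$ from $f$ to $f'$, handling interior $V_1$-vertices via Proposition~\ref{rem24} and interior $V_0$-vertices via the vertex-type classification, followed by a combinatorial enumeration of the possible configurations. The paper instead cuts $(G,\partial G)$ along the two torus splittings, considers the middle piece $N_0$, constructs a map $F:(\Sigma\times I,\Sigma\times\partial I)\rightarrow(N_0,\partial N_0)$ with $\Sigma$ the torus on the $VPC(n+1)$ group $L=J\cap J'$, and shows $F$ has non-zero degree; Poincar\'e duality then forces $L$ to have finite index in $\pi_1(N_0)$, so the cover of $N_0$ with fundamental group $L$ is finite, giving directly that $\partial N_0$ consists of either two tori carrying $\pi_1(N_0)$ or a single torus of index two in $\pi_1(N_0)$. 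This global homological argument immediately gives $J=J'$ and bounds the stabilizer of the whole path by an index-two overgroup of $J$, after which reducedness reads off the four cases in one step.

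There are two genuine gaps in your version. First, at interior $V_0$-vertices your argument invokes ``the atoroidal-pair reasoning behind Proposition~\ref{rem24} (ultimately via Proposition~\ref{prop23}),'' but Proposition~\ref{prop23} is a statement about atoroidal $PD(n+2)$ pairs, and the $V_0$-vertices of Seifert or commensuriser type are precisely the non-atoroidal pieces of $\Gamma_{n,n+1}^c$; the doubling trick used to prove Proposition~\ref{rem24} for $V_1$-vertices produces an atoroidal pair there, but no analogous atoroidal pair is available at a Seifert- or commensuriser-type $V_0$-vertex, so that proposition simply cannot be applied. To rule out such vertices as interior vertices of $\gamma$ you would need a separate argument, e.g.\ that two distinct peripheral $VPC(n+1)$ edge groups of a $VPCn$-by-Fuchsian vertex cannot be commensurable because their images in the Fuchsian quotient would be commensurable distinct peripheral cyclic subgroups. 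Second, you explicitly defer the final enumeration --- ruling out longer alternations of isolated $V_1$-vertices and special $V_0$-vertices --- as ``the main obstacle,'' but this is precisely where the paper's degree argument does the real work: it shows at a stroke that the stabilizer of the entire path is $J$ or contains $J$ with index two, from which the length bound falls out of reducedness without any further case analysis. Your local approach would need to first establish that consecutive edge stabilizers along $\gamma$ coincide (which is plausible but needs the $V_0$-vertex analysis fixed) and then argue that at most one reflecting $V_0$-vertex can occur, using the index-two structure; as written that step is not supplied.
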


\begin{proof}
As above, we will start with a $K(\pi,1)$ pair $(M,\partial M)$ and a
decomposition of $(M,\partial M)$ mimicking the decomposition $\Gamma
_{n,n+1}^{c}$. If $f=f^{\prime}$, we have case 1) of the lemma, so for the
rest of the proof we will assume that $f\neq f^{\prime}$.

The edges $f$ and $f^{\prime}$ determine splittings of $G$ over $J\ $and
$J^{\prime}$, so that these are tori in $(G,\partial G)$. Let $L$ denote the
intersection $J\cap J^{\prime}$, so that $L\ $is also $VPC(n+1)$, and let
$\Sigma$ denote a torus with fundamental group $L$. Consider the $PD(n+2)$
pair $(K,\partial K)$ obtained by cutting $(G,\partial G)$ along these two
splittings, and let $(N,\partial N)$ be obtained from $M$ in the corresponding
way. (It is possible that $f$ and $f^{\prime}$ yield a single splitting.) The
path in $T$ between $f$ and $f^{\prime}$ determines (up to homotopy) a map
$F:(\Sigma\times I,\Sigma\times\partial I)\rightarrow(N,\partial N)$. We let
$N_{0}$ denote the component of $N$ which contains the image of $F$, and
consider the induced map $(\Sigma\times I,\Sigma\times\partial I)\rightarrow
(N_{0},\partial N_{0})$ which we continue to denote by $F$. The degree of $F$
on each component of $\Sigma\times\partial I$ is non-zero, and if
$F(\Sigma\times\partial I)$ is contained in a single component of $\partial
N_{0}$, these degrees add. Thus the degree of $F$ is non-zero. It follows that
$(N_{0},\partial N_{0})$ has a finite cover to which $F$ lifts by a map which
is an isomorphism of fundamental groups. In particular, it follows that the
boundary of this cover consists of two tori with fundamental groups equal to
$L$. Hence either $\partial N_{0}$ consists of two tori with the same
fundamental group as $N_{0}$, or $\partial N_{0}$ consists of a single torus
with fundamental group $J$, and $J$ has index $2$ in $K=\pi_{1}(N_{0})$. In
either case, it follows that $J=J^{\prime}$. In the first case, the path
$\lambda$ joining $f$ and $f^{\prime}$ in $T$ has stabilizer $J$, and each
vertex on that path must be isolated. As $\Gamma_{n,n+1}^{c}(G)$ is reduced,
we must have case 2) of the lemma. In the second case, the stabilizer of
$\lambda$ contains $J$ with index $2$, and contains a reflection. Thus there
is a vertex $w$ of $\lambda$ of valence $2$ whose stabiliser contains an
element interchanging the two incident edges, and so contains $J$ with index
$2$. Further, as $\Gamma_{n,n+1}^{c}(G)$ is reduced, either $w$ equals $f\cap
f^{\prime}$ or there are consecutive adjacent edges $f,b,b^{\prime},f^{\prime
}$ of $T$ such that $b\cap f$ and $b^{\prime}\cap f^{\prime}$ are isolated
vertices, and $w$ equals $b\cap b^{\prime}$. In either case, this implies that
the image of $w$ in $\Gamma_{n,n+1}^{c}(G)$ is of special Seifert type, and so
is a $V_{0}$--vertex. Thus we must have cases 3) or 4) of the lemma.
\end{proof}

Now we can give an alternative description of special canonical tori in terms
of $\Gamma_{n,n+1}^{c}(G)$.

\begin{proposition}
\label{characterizespecialcanonical}For an orientable $PD(n+2)$ pair
$(G,\partial G)$ such that $G$ is not $VPC$, a splitting $\alpha$ of $G$ over
a $VPC(n+1)$ subgroup $H$ is a special canonical torus if and only if the
following conditions hold:

\begin{enumerate}
\item $\alpha$ is an edge splitting of $\Gamma_{n,n+1}^{c}(G)$.

\item The $V_{1}$--vertex $w$ of $\alpha$ is isolated.

\item Each of the $V_{0}$--vertices adjacent to $w$ is of peripheral Seifert
type, of toral type 2), or of special Seifert type.

\item At most one $V_{0}$--vertex can be of special Seifert type.

\item If the two edges incident to $w$ form a loop, there is only one adjacent
$V_{0}$--vertex. In this case, that vertex must be of peripheral Seifert type.
\end{enumerate}

(The concepts of peripheral Seifert type, and toral type 2) are discussed
immediately preceding Definition \ref{defnofspecialcanonicaltorus},
and\textquotedblleft special Seifert type" was defined before Theorem
\ref{thm21}. The reader is referred to \cite{SS05} for full details.)
\end{proposition}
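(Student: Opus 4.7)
The plan is to prove necessity and sufficiency. For necessity, suppose $\alpha$ is a special canonical torus over a $VPC(n+1)$ subgroup $H$, with dual almost invariant set $X$. To establish (1), I would observe that every almost invariant subset of $G$ over a $VPCn$ subgroup is, up to passage to a finite-index subgroup, a finite sum of sets dual to essential annuli; since $\alpha$ has intersection number zero with every essential annulus, $X$ does not cross any almost invariant subset over a $VPCn$ group, so $X$ is $n$--canonical. The result recalled from \cite{SS05} in Section \ref{section:prelim} then gives that $X$ is automatically adapted to $\partial G$, placing it in $\mathcal{F}_{n,n+1}$. By Theorem \ref{thm21}(4), $X$ is enclosed by a $V_{0}$--vertex of $\Gamma_{n,n+1}^{c}(G)$. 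The hypothesis that $G$ splits over incommensurable $VPCn$ subgroups of $H$ yields essential annuli on both sides of $X$ enclosed by distinct $V_{0}$--vertices, forcing $\alpha$ to be realised as an edge splitting rather than as a splitting internal to a single $V_{0}$--vertex.

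For (2) I would apply Proposition \ref{rem24} to the $V_{1}$--vertex $w$: the incommensurable $VPCn$ splittings of $G$ over $H$ give configurations of $VPCn$ subgroups of $K = G(w)$ that cannot all be handled by the edge-group conjugation of case (1) of that proposition, since the relevant $VPCn$ subgroups are incommensurable in $H$. So case (2) must apply, making $w$ isolated with $K = H$. For (3)--(5), each adjacent $V_{0}$--vertex must enclose essential annuli over $VPCn$ subgroups of $H$ realising the incommensurable splittings of $G$. A case analysis over the $V_{0}$--vertex types of Theorem \ref{thm21} and the completion (isolated, $I$--bundle, interior Seifert, commensuriser of Seifert, torus, or solid torus type, plus special Seifert and special solid torus) shows that only peripheral Seifert, toral type 2), and special Seifert types can enclose such annuli ending on the edge of stabiliser $H$. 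Condition (4) follows because two adjacent special Seifert $V_{0}$--vertices would, via their valence-one and index-two structure described in Lemma 8.5 of \cite{SS05}, produce parallel edges near $w$ contradicting Lemma \ref{paralleledges}; condition (5) for the loop case follows similarly, leaving only the peripheral Seifert option.

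The reverse direction is more direct. Given (1), the dual set $X$ does not cross any element of $\mathcal{G}_{n,n+1}$ by the regular neighbourhood property of edge splittings, so $\alpha$ has intersection number zero with every essential annulus. Conditions (3)--(5) ensure each adjacent $V_{0}$--vertex encloses essential annuli over $VPCn$ subgroups of $H$, and checking the three permitted vertex types shows that the annuli so produced realise splittings of $G$ over at least two incommensurable $VPCn$ subgroups of $H$. The main obstacle will be the forward direction, particularly the precise justification of (1) and (2), which require careful use of the regular neighbourhood formalism and of Proposition \ref{rem24} together with the incommensurability hypothesis; the case analysis for (3) is also somewhat involved because ruling out the remaining $V_{0}$ types requires understanding how essential annuli with an end at a fixed torus can arise inside each type of vertex piece.
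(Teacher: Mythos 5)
Your overall shape is right, and your sufficiency direction matches the paper's: you use that edge splittings of $\Gamma_{n,n+1}^{c}(G)$ cross no essential annulus, and that conditions (3)--(5) guarantee the adjacent $V_{0}$--vertices carry annuli giving the two incommensurable $VPCn$ splittings of $H$ (the paper handles the special Seifert case by conjugating the fibre subgroup $L$ by an element $g \in \overline{H}\smallsetminus H$ to get $L^{g}$, which your ``check the three permitted vertex types'' gestures at but should be spelled out).

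The forward direction is where there is a genuine gap, specifically in your justification of (2). You propose to apply Proposition \ref{rem24} to the $V_{1}$--vertex $w$, claiming that the incommensurable $VPCn$ splittings of $G$ produce a conjugation of $VPCn$ subgroups by an element of $K=G(w)$ that cannot satisfy case (1). But the annuli realising these splittings of $G$ over $L$ and $L'$ are enclosed by $V_{0}$--vertices, not by $w$; they live in the pieces corresponding to $v$ and $v'$, and there is no obvious way to extract from them a conjugating element of $K$ acting between distinct groups in $\partial_{1}K$. The paper instead uses Lemma \ref{paralleledges} as the engine: it locates two $V_{0}$--vertices $v$, $v'$ of commensuriser type (each the enclosing vertex of an essential annulus over $L$ resp.\ $L'$, hence with $H\subset G(v)\cap G(v')$), observes that $\alpha$ must be realised by an edge of $\Gamma_{n,n+1}^{c}(G)$ incident to each, and then applies Lemma \ref{paralleledges} to the pair of incident edges with stabiliser $H$; since $v,v'$ are neither isolated nor of special Seifert type, only cases (2) or (4) of that lemma can hold, which forces the isolated $V_{1}$--vertex between them and delivers conditions (2)--(5) in one stroke. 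Your separate treatment of (3) as a ``case analysis'' and of (4), (5) via Lemma \ref{paralleledges} essentially re-derives what that lemma gives directly, once the enclosure by $v$ and $v'$ is in hand; but without that step, the Proposition \ref{rem24} route for (2) does not get off the ground.

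A secondary point: for condition (1), the paper uses Lemma 6.2 of \cite{SS05} directly to conclude $\alpha$ is adapted to $\partial G$, whereas you go through $n$--canonicity via the finite-sum-of-annuli fact; that route is fine, but the real content of (1) is identifying $\alpha$ as an edge splitting rather than as a splitting internal to some $V_{0}$--vertex, and the reason is precisely that $\alpha$ has zero intersection with all the annuli enclosed by the commensuriser-type vertex $v$ --- this should be stated rather than summarised as ``forcing $\alpha$ to be realised as an edge splitting.''
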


\begin{proof}
First suppose that $\alpha$ is a splitting of $G$ over a $VPC(n+1)$ subgroup
$H$ which satisfies conditions 1)-5) of the Proposition. As $\alpha$ is an
edge splitting of $\Gamma_{n,n+1}^{c}(G)$, it has intersection number zero
with any essential annulus in $(G,\partial G)$. It remains to show that $G$
splits over incommensurable $VPCn$ subgroups of $H$. By condition 2), the
$V_{1}$--vertex $w$ of $\alpha$ is isolated. Suppose there is no $V_{0}%
$--vertex adjacent to $w$ of special Seifert type. Then each of the adjacent
$V_{0}$--vertices meets $\partial G$ in annuli (and/or tori in the case of
peripheral Seifert type) and choosing an essential embedded annulus in the
$V_{0}$--vertex and with boundary in $\partial G$ determines a splitting of
$G$ over the $VPCn$ subgroup of $H$ carried by the fibres. These subgroups of
$H$ must be incommensurable, as otherwise there would be an annulus in
$(G,\partial G)$ which crosses $\alpha$, contradicting the fact that any edge
splitting of $\Gamma_{n,n+1}^{c}$ crosses no annulus in $(G,\partial G)$. Thus
$\alpha$ is a special canonical torus in this case. Next suppose there is a
$V_{0}$--vertex adjacent to $w$ of special Seifert type. Denote its associated
group by $\overline{H}$, and recall that $H$ is a subgroup of index $2$ in
$\overline{H}$. As before, choosing an essential embedded annulus in the other
$V_{0}$--vertex with boundary in $\partial G$ determines a splitting of $G$
over the $VPCn$ subgroup $L$ of $H$ carried by the fibres. Now let $g$ denote
an element of $\overline{H}-H$. We also have a splitting of $G$ over $L^{g}$.
As $g$ normalises $H$, this is also a subgroup of $H$. Finally $L\ $and
$L^{g}$ must be incommensurable subgroups of $H$, for otherwise there would be
an annulus in $(G,\partial G)$ which crosses $\alpha$, contradicting the fact
that $\alpha$ is an edge splitting of $\Gamma_{n,n+1}^{c}$. Thus again
$\alpha$ is a special canonical torus.

Now suppose that $\alpha$ is a special canonical torus. By Lemma 6.2 of
\cite{SS05}, the fact that $\alpha$ has intersection number zero with any
essential annulus in $(G,\partial G)$ implies that $\alpha$ is adapted to
$\partial G$. Thus the splitting $\alpha$ is dual to a torus in $(G,\partial
G)$, and so must be enclosed by some $V_{0}$--vertex of $\Gamma_{n,n+1}%
^{c}(G)$. As $G$ splits over a $VPCn$ subgroup $L\subset H$, the pair
$(G,\partial G)$ admits an essential annulus with group $L$, and any such
annulus must be enclosed by a $V_{0}$--vertex $v$ of $\Gamma_{n,n+1}^{c}(G)$,
of commensuriser type, so that $G(v)$ contains $H$. Thus $\alpha$ is enclosed
by $v$. As $\alpha$ has intersection number zero with any essential annulus in
$(G,\partial G)$, it follows that $\alpha$ must be the splitting of $G$
associated to an edge of $\Gamma_{n,n+1}^{c}(G)$ incident to $v$. This proves
that $\alpha$ satisfies condition 1) of the proposition. Further $v$ must be
of peripheral Seifert type or of toral type 2). Now we use the hypothesis that
$G$ splits over two incommensurable $VPCn$ subgroups $L\ $and $L^{\prime}$ of
$H$. So the pair $(G,\partial G)$ admits an essential annulus with group
$L^{\prime}$, which is enclosed by a $V_{0}$--vertex $v^{\prime}$ of
$\Gamma_{n,n+1}(G)$ which also has an incident edge with splitting $\alpha$.
Further $v^{\prime}$ must be of peripheral Seifert type or of toral type 2).

If $v$ and $v^{\prime}$ are distinct, the incident edges with splitting
$\alpha$ must also be distinct. As neither of $v$ and $v^{\prime}$ is isolated
or of special Seifert type, Lemma \ref{paralleledges} implies that $v$ and
$v^{\prime}$ must be separated by an isolated $V_{1}$--vertex, so that
$\alpha$ satisfies conditions 2)-5) of the Proposition, as required.

If $v$ and $v^{\prime}$ coincide, there must be $g\in G$ such that $L^{\prime
}=L^{g}$. There are now two cases, depending on whether or not there are two
distinct edges incident to $v$ with associated splitting $\alpha$. Again we
will apply Lemma \ref{paralleledges} and use the fact that $v$ is not isolated
nor of special Seifert type. If there are two distinct such edges, Lemma
\ref{paralleledges} implies that they meet in an isolated $V_{1}$--vertex. In
addition, $v$ cannot be of torus type 2), as such a $V_{0}$--vertex can have
at most one incident edge dual to a torus. It follows that $\alpha$ satisfies
conditions 2)-5) of the Proposition. If there is only one such edge, Lemma
\ref{paralleledges} implies that there is a $V_{0}$--vertex $v^{\prime\prime}$
of special Seifert type which is adjacent to $v$ and separated from $v$ by an
isolated $V_{1}$--vertex $w$. Again this implies that $\alpha$ satisfies
conditions 2)-5) of the Proposition, as required.
\end{proof}

We can now apply Proposition \ref{characterizespecialcanonical} to obtain the
following result.

\begin{proposition}
\label{specialcanonicaltoruscfrossesa.i.setoverH}Let $(G,\partial G)$ be an
orientable $PD(n+2)$ pair such that $G$ is not $VPC$. Let $\alpha$ be a
special canonical torus in $(G,\partial G)$ with group $H$. Then $\alpha$
crosses some almost invariant subset of $G$ over a subgroup of finite index in
$H$.
\end{proposition}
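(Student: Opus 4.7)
First I would apply Proposition~\ref{characterizespecialcanonical} to $\alpha$: this exhibits $\alpha$ as an edge splitting of $\Gamma_{n,n+1}^{c}(G)$ whose $V_{1}$--vertex $w$ is isolated with stabilizer $H$, and whose two adjacent $V_{0}$--vertices $v,v'$ (possibly equal) are each of peripheral Seifert, toral type~2, or special Seifert type. As in the second half of the proof of Proposition~\ref{characterizespecialcanonical}, these vertices each enclose an essential annulus in $(G,\partial G)$ whose group is a $VPCn$ subgroup of $H$; call these groups $L\leq H$ (from the annulus in $v$) and $L'\leq H$ (from $v'$). The hypothesis that $\alpha$ is a special canonical torus forces $L$ and $L'$ to be incommensurable in $H$. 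Set $H':=\langle L,L'\rangle\leq H$. Since $L$ and $L'$ are incommensurable $VPCn$ subgroups of the $VPC(n+1)$--group $H$, the Hirsch length of $H'$ is at least $n+1$, so $H'$ has finite index in $H$.

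Next I would construct an $H'$--almost invariant subset $Y$ of $G$ that crosses $\alpha$, directly generalizing the construction of $Y_{1}$ in the examples preceding Definition~\ref{defnofspecialcanonicaltorus}. Fix a $K(G,1)$--pair $(M,\partial M)$, let $T\subset M$ be the essential torus dual to $\alpha$ (so $\pi_{1}(T)=H$), and pass to the cover $M_{H'}$. Because $H\supseteq H'\supseteq\pi_{1}(T)$, the preimage of $T$ contains a compact component $\widetilde{T}$ (a finite cover of $T$) which separates $M_{H'}$ into a $v$--side $S_{v}$ and a $v'$--side $S_{v'}$. Since $L,L'\leq H'$, the annulus dual to $L$ lifts to a compact annulus $\widetilde{\Lambda}\subset S_{v}$ with a boundary circle on $\widetilde{T}$, and similarly one gets a compact annulus $\widetilde{\Lambda}'\subset S_{v'}$ for $L'$. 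Define $Y$ to be the set of $0$--cells of $P\cup P'$, where $P$ is a complementary component of $\widetilde{\Lambda}$ in $S_{v}$ and $P'$ is a complementary component of $\widetilde{\Lambda}'$ in $S_{v'}$, chosen so that $P$ and $P'$ lie on \textbf{opposite} sides of $\widetilde{T}$. The topological boundary of $Y$ in $M_{H'}$ lies in the compact set $\widetilde{T}\cup\widetilde{\Lambda}\cup\widetilde{\Lambda}'$, whence $Y$ is $H'$--almost invariant. Letting $X$ denote the a.i. subset dual to $\alpha$, one reads off from the construction that $Y\cap X\supseteq P$, $Y\cap X^{*}\supseteq P'$, while $Y^{*}\cap X$ and $Y^{*}\cap X^{*}$ contain the opposite complementary components of $\widetilde{\Lambda}$ and $\widetilde{\Lambda}'$ on their respective sides of $\widetilde{T}$; the incommensurability of $L$ and $L'$ prevents any of these four sets from being $H'$--finite, so $Y$ crosses $\alpha$.

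The main obstacle is treating the degenerate configurations allowed by Proposition~\ref{characterizespecialcanonical}. When one of $v,v'$ is of special Seifert type, the pair $L,L'$ arises as $\{L,L^{g}\}$ for some $g$ in an index--$2$ overgroup $\overline{H}$ of $H$, so the two sides of $\widetilde{T}$ are swapped by a deck transformation and care is needed to place $\widetilde{\Lambda}$ and $\widetilde{\Lambda}'$ on the prescribed opposite sides; Proposition~\ref{rem24} and the doubling machinery from \cite{SS05} are used here to rule out unwanted identifications. The loop case $v=v'$ (condition~(5) of Proposition~\ref{characterizespecialcanonical}, where $v$ must be of peripheral Seifert type) is handled analogously: the two incident edges with splitting $\alpha$ give rise to two copies of $\widetilde{T}$ in the appropriate cover, and one must verify that $P$ and $P'$ can still be chosen to straddle a single chosen lift. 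In each case the payoff is a genuine $H'$--almost invariant subset whose four-way intersection with $X,X^{*}$ is nontrivial, which is exactly the crossing claim.
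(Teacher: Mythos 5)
Your plan diverges from the paper's at the second step, and the divergence is where the argument breaks down. The paper works purely algebraically: letting $Y$ and $Y'$ denote the $L$-- and $L'$--almost invariant subsets of $G$ coming from the edge splittings dual to annuli (arranged so $Y\subset X$ and $Y'\subset X^{*}$, where $X$ is the a.i.\ set for $\alpha$), it first observes that $H(Y\cup Y')$ would cross $X$ except in the exceptional cases $HY=X$ or $HY'=X^{*}$. It then fixes a \emph{proper} finite-index subgroup $P<H$ containing $L$ and $P'<H$ containing $L'$, and shows that for $Q=P\cap P'$ the set $Q(Y\cup Y')$ crosses $X$, precisely because properness guarantees $X-PY$ and $X^{*}-P'Y'$ are $H$--infinite (using that $hY$ is equal to or disjoint from $Y$ for $h\in H$). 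You replace this with $H'=\langle L,L'\rangle$, which, although finite index in $H$ by the Hirsch-length argument, need not be a \emph{proper} subgroup of $H$, so the exceptional cases are not excluded; and, more to the point, you never address them at all.

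The topological construction you substitute is not correct as written. You assert ``Because $H\supseteq H'\supseteq\pi_{1}(T)$'' when $\pi_{1}(T)=H$ and $H'\leq H$, so this inclusion is false unless $H'=H$ (the conclusion that the preimage of $T$ in $M_{H'}$ has a compact component is true, but for the opposite reason: $H'\cap H=H'$ has finite index in $H$). More seriously, you describe the lift $\widetilde{\Lambda}$ of the annulus dual to $L$ as ``a compact annulus $\subset S_{v}$ with a boundary circle on $\widetilde{T}$.'' An essential annulus in $(G,\partial G)$ has its boundary in $\partial G$, hence $\widetilde{\Lambda}$ has boundary in $\partial M_{H'}$, not on the internal torus $\widetilde{T}$. (You may be thinking of the attaching annuli $C_{i}$ in the explicit example before Definition~\ref{defnofspecialcanonicaltorus}, but those lie in $T_{i}\times\{1\}$ and are not essential annuli of the pair; conflating the two is what makes the ``complementary component'' language appear to make sense.) Without a boundary circle on $\widetilde{T}$, a single compact lift $\widetilde{\Lambda}$ need not separate $S_{v}$, so the sets $P$, $S_{v}-P$ you want to use are not even well defined. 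Finally, the claimed crossing--- ``the incommensurability of $L$ and $L'$ prevents any of these four sets from being $H'$--finite''---is asserted without argument and is exactly where the paper's careful treatment of the exceptional cases (torus type 2) with a unique annular edge, special Seifert type, and the loop case) is needed. Your paragraph acknowledging the degenerate configurations defers to ``doubling machinery'' and ``care,'' but the paper's treatment of those cases is genuinely different (pass to an index-$2$ cover $K$, reduce to the generic case, then transfer the resulting $Q$--a.i.\ set back to $G$), and that reduction is not something your construction performs.
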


\begin{proof}
From the definition of a special canonical torus, $G\ $splits over
incommensurable subgroups $L$ and $L^{\prime}$ of $H$. Let $X$ be the
$H$--almost invariant subset of $G$ determined (up to equivalence and
complementation) by $\alpha$. We will apply Conditions 1)-5) of Proposition
\ref{characterizespecialcanonical}. Thus $\alpha$ is an edge splitting of
$\Gamma_{n,n+1}^{c}(G)$, and the $V_{1}$--vertex $w$ of $\alpha$ is isolated.

If there are two distinct $V_{0}$--vertices $v$ and $v^{\prime}$ adjacent to
$w$, neither of special Seifert type, we can assume that the splitting of
$G\ $over $L$ is enclosed by $v$, and that the splitting of $G\ $over
$L^{\prime}$ is enclosed by $v^{\prime}$. Thus there is an edge of
$\Gamma_{n,n+1}^{c}(G)$ incident to $v$ with associated splitting dual to an
annulus with group $L$, and there is an edge of $\Gamma_{n,n+1}^{c}(G)$
incident to $v^{\prime}$ with associated splitting dual to an annulus with
group $L^{\prime}$. Let $Y$ denote the $L$--almost invariant subset of $G$
determined by the edge splitting of $G\ $over $L$, and let $Y^{\prime}$ denote
the $L^{\prime}$--almost invariant subset of $G$ determined by the edge
splitting of $G\ $over $L^{\prime}$. By replacing each of $X$, $Y$ and
$Y^{\prime}$ by its complement if needed, we can arrange that $Y\subset X$ and
$Y^{\prime}\subset X^{\ast}$. Then $H(Y\cup Y^{\prime})$ is a $H$--almost
invariant subset of $G$. This subset crosses $X$, and hence crosses $\alpha$,
unless we are in one of the exceptional cases where $HY=X$, or $HY^{\prime
}=X^{\ast}$. Now for any $h\in H$, the set $hY$ is equal to or disjoint from
$Y$. Thus if $P$ is a proper subgroup of finite index in $H$ which contains
$L$, then $PY$ and $X-PY$ are both $H$--infinite. Similarly if $P^{\prime}$ is
a proper subgroup of finite index in $H$ which contains $L^{\prime}$, then
$P^{\prime}Y^{\prime}$ and $X^{\ast}-P^{\prime}Y^{\prime}$ are both
$H$--infinite. Thus, if $Q$ denotes $P\cap P^{\prime}$, then $Q$ has finite
index in $H$, and $Q(Y\cup Y^{\prime})$ is a $Q$--almost invariant subset of
$G$, which crosses $X$, and hence crosses $\alpha$, as required.

If there are two distinct $V_{0}$--vertices $z$ and $z^{\prime}$ adjacent to
$w$, and if $z^{\prime}$ is of special Seifert type, there is a homomorphism
$G(z^{\prime})\rightarrow\mathbb{Z}_{2}$, with kernel $H$. This extends to a
homomorphism $G\rightarrow\mathbb{Z}_{2}$, which is trivial on all vertex
groups other than $G(z^{\prime})$. Let $K\ $denote the kernel of this
homomorphism, so that $K$ is of index $2$ in $G$. This naturally has the
structure of a $PD(n+2)$ pair, and there is a natural map $\Gamma_{n,n+1}%
^{c}(K)\rightarrow$ $\Gamma_{n,n+1}^{c}(G)$. The pre-image of $z^{\prime}$ is
an isolated $V_{1}$--vertex of $\Gamma_{n,n+1}^{c}(K)$. The adjacent $V_{0}%
$--vertices consist of two copies of $z$. Hence the preceding paragraph yields
a subgroup $Q$ of finite index in $H$, and a $Q$--almost invariant subset of
$K$ which crosses $\alpha$. It follows that there is also a $Q$--almost
invariant subset of $G$ which crosses $\alpha$, as required.

If the two edges incident to $w$ form a loop, so there is only one adjacent
$V_{0}$--vertex $z$, Condition 5) tells us that $z$ must be of peripheral
Seifert type. This loop determines a natural map from $G\ $to $\mathbb{Z}$,
which is trivial on all vertex groups, and hence determines a natural map from
$G\ $to $\mathbb{Z}_{2}$, which is trivial on all vertex groups. The kernel is
a subgroup $K$ of $G$ of index $2$ which is still naturally a $PD(n+2)$ pair,
and again there is an edge splitting over $H$ of $\Gamma_{n,n+1}^{c}(K)$ whose
$V_{1}$--vertex is isolated. The adjacent $V_{0}$--vertices now consist of two
copies of $z$. As before, this yields a subgroup $Q$ of finite index in $H$,
and a $Q$--almost invariant subset of $G$ which crosses $\alpha$, as required.
\end{proof}

We note that in Example \ref{scottexample}, the special canonical torus with
group $H$ crosses a splitting over the same group $H$. Thus it seems
reasonable to ask the following.

\begin{problem}
Let $(G,\partial G)$ be an orientable $PD(n+2)$ pair such that $G$ is not
$VPC$. Let $\alpha$ be a special canonical torus in $(G,\partial G)$ with
group $H$. When is it true that $\alpha$ crosses a splitting of $G$ over $H$?
\end{problem}

Proposition \ref{specialcanonicaltoruscfrossesa.i.setoverH} does nothing to
answer this question. But the argument does show that in most cases, a special
canonical torus with group $H$ crosses an almost invariant set over the same
group $H$. For the special case in the argument when $HY=X$ can only occur if
$v$ is of torus type 2) and also has only one incident edge with associated
splitting dual to an annulus. A similar statement for $v^{\prime}$ holds if
$HY^{\prime}=X^{\ast}$. We believe that the above problem has a positive
answer except possibly in these exceptional cases. In the exceptional cases,
it seems possible that a special canonical torus with group $H$ may not cross
any $H$--almost invariant subset of $G$.

Here is a construction which shows that in many cases, a special canonical
torus $\alpha$ in $(G,\partial G)$ with group $H$ crosses a splitting of $G$
over $H$. Suppose that there are two distinct $V_{0}$--vertices $v$ and
$v^{\prime}$ adjacent to $w$, and that each of $v$ and $v^{\prime}$ has at
least two incident edges in addition to the edge joining it to $w$. Let $e$ be
an edge of $\Gamma_{n,n+1}^{c}(G)$ incident to $v$ with associated splitting
dual to an annulus with group $L$, and let $e^{\prime}$ be an edge of
$\Gamma_{n,n+1}^{c}(G)$ incident to $v^{\prime}$ with associated splitting
dual to an annulus with group $L^{\prime}$.\ Now construct a new graph of
groups structure for $G$ by sliding the end of $e$ at $v$ along the two edges
joining $v$ to $v^{\prime}$, and also sliding the end of $e^{\prime}$ at
$v^{\prime}$ along the two edges joining $v^{\prime}$ to $v$. Let $\beta$
denote the splitting of $G$ determined by each of the two edges joining $v$
and $v^{\prime}$ to $w$. Clearly $\beta$ is a splitting of $G$ over $H$, and
by considering the universal covering $G$--trees of $\Gamma_{n,n+1}^{c}(G)$
and of the new graph of groups, it is easy to see that it must cross $\alpha$.

\section{Proof of the main result\label{section:mainresult}}

In this section, we prove the main theorem below and then a result about
enclosings of all almost invariant sets over $VPC(n+1)$ groups. First we need
to introduce yet another version of the term "canonical", which generalizes
the term "algebraically canonical" discussed in the introduction. Let
$\mathcal{E}_{n,n+1}(G)$ denote the collection of all a.i. subsets of $G$
which are over a $VPCn$ or $VPC(n+1)$ subgroup. We will say that an element of
$\mathcal{E}_{n,n+1}$ is \textit{canonical} if it has intersection number zero
with every element of $\mathcal{E}_{n,n+1}$.

\begin{theorem}
[Main result]\label{mainresult}Let $(G,\partial G)$ be an orientable $PD(n+2)$
pair such that $G$ is not $VPC$. The edge splittings of $\Gamma_{n,n+1}(G)$
and of $\Gamma_{n,n+1}^{c}(G)$ are either canonical or are special canonical tori.
\end{theorem}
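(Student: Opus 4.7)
The plan is as follows. Any edge splitting $\sigma$ of $\Gamma_{n,n+1}(G)$, equivalently of $\Gamma_{n,n+1}^{c}(G)$, is by Theorem \ref{thm21}(3) dual to an essential annulus or torus in $(G,\partial G)$, so $\sigma$ is over a $VPCn$ or $VPC(n+1)$ subgroup $H$. The key starting point is the fact recalled in the preliminaries that $\Gamma_{n,n+1}(G)$ is the reduced algebraic regular neighborhood not only of $\mathcal{F}_{n,n+1}$ but also of the enlarged family $\mathcal{G}_{n,n+1}$, obtained by adjoining to $\mathcal{F}_{n,n+1}$ all $VPC(n+1)$ a.i. subsets of $G$ adapted to $\partial G$. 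A defining property of an algebraic regular neighborhood is that every edge splitting has intersection number zero with every element of the defining family; applied to $\mathcal{G}_{n,n+1}$ this gives that $\sigma$ has intersection number zero with every $VPCn$ a.i. subset and with every adapted $VPC(n+1)$ a.i. subset of $G$. Hence $\sigma$ can fail to be canonical only if it crosses some non-adapted $VPC(n+1)$ a.i. subset $Y$, and the task is to show that in this case $\sigma$ is a special canonical torus.

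Assume then that $\sigma$ crosses such a $Y$. The first subtask is to show that $\sigma$ is over a $VPC(n+1)$ subgroup; equivalently, to rule out that a splitting dual to an essential annulus can be crossed by any $VPC(n+1)$ a.i. subset of $G$. This is the main obstacle of the argument. Geometrically, in the $3$-manifold analogue, it amounts to the fact that JSJ annuli are topologically canonical, proved in \cite{SS02}. Algebraically I would argue it as follows: if $\sigma$ were over $VPCn$, its dual a.i. subset $X$ would be the set associated to an essential annulus and in particular would be adapted to $\partial G$. Using the cohomological description of $Y$ as the image of an element of $H^{0}(G;E[H_{Y}\backslash G])$ under $\delta$, together with Lemma 6.2 of \cite{SS05}, one argues that the non-vanishing of $i(X,Y)$ would force $Y$ itself to be adapted to $\partial G$, contradicting our hypothesis on $Y$.

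Granted this, $H$ is $VPC(n+1)$ and $\sigma$ is dual to an essential torus. The first clause of Definition \ref{defnofspecialcanonicaltorus}, that $\sigma$ has intersection number zero with every essential annulus in $(G,\partial G)$, is automatic, since every such annulus is dual to a $VPCn$ a.i. subset lying in $\mathcal{G}_{n,n+1}$. It remains to produce two incommensurable $VPCn$ subgroups of $H$ over which $G$ splits. Let $w$ be the $V_{1}$-vertex of $\sigma$ in $\Gamma_{n,n+1}^{c}(G)$, and let $v,v'$ be the (possibly equal) adjacent $V_{0}$-vertices. Since $Y$ crosses $\sigma$ and is non-adapted, $Y$ cannot be enclosed in any single $V_{0}$-vertex, and it contributes nontrivially on both sides of $w$ in a manner that $V_{0}$-vertices of $I$-bundle type or of interior Seifert type (cases 1(b), 1(c) of Theorem \ref{thm21}) cannot enclose. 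Combined with the classification in Theorem \ref{thm21}, this forces $v$ and $v'$ to each be of peripheral Seifert type, of toral type 2), or (in $\Gamma_{n,n+1}^{c}$) of special Seifert type.

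In each of these configurations, I would produce, essentially by reversing the construction in the proof of Proposition \ref{specialcanonicaltoruscfrossesa.i.setoverH}, an essential annulus enclosed by the vertex, and hence a splitting of $G$ over a $VPCn$ subgroup of $H$. The two such splittings, one from each side of $w$ (or, in the loop case and in the case where only a single adjacent vertex is present because the other is of special Seifert type, obtained by first passing to an appropriate index $2$ subgroup of $G$ exactly as in the proof of Proposition \ref{specialcanonicaltoruscfrossesa.i.setoverH}), must be over incommensurable $VPCn$ subgroups of $H$: any commensurability would produce an essential annulus in $(G,\partial G)$ crossing $\sigma$, violating the regular-neighborhood property established at the outset. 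This verifies both clauses of Definition \ref{defnofspecialcanonicaltorus}, so $\sigma$ is a special canonical torus and the proof is complete.
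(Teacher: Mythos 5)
Your overall skeleton (a non-canonical edge splitting must be crossed by a non-adapted $VPC(n+1)$ almost invariant set; then verify the two clauses of Definition \ref{defnofspecialcanonicaltorus}) matches the paper's starting point and endpoint, but the middle, which carries the whole weight of the theorem, is handled very differently and your version has genuine gaps.

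First, your "first subtask" -- ruling out that a non-adapted $VPC(n+1)$ a.i.\ subset $Y$ can cross an annulus edge splitting -- is asserted but not proved. You suggest that, via the cohomological description and Lemma 6.2 of \cite{SS05}, the nonzero intersection number $i(X,Y)$ would force $Y$ to be adapted, but no mechanism is given and it is not clear why crossing a $VPCn$ splitting should propagate adaptedness. The paper never proves this statement directly; it falls out only at the very end, as a corollary of localizing the crossed edge $e$ to a specific small set of edges, each of which turns out to be a torus edge.

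Second, the local analysis near the $V_{1}$--vertex $w$ of $\sigma$ is assumed rather than established. You declare that the adjacent $V_{0}$--vertices $v,v'$ must be of peripheral Seifert, toral type 2), or special Seifert type because "$Y$ cannot be enclosed in any single $V_{0}$--vertex," but there is no argument for why that forces these specific vertex types, nor for why $w$ is isolated (which is condition 2 of Proposition \ref{characterizespecialcanonical} and is essential for extracting the two incommensurable $VPCn$ splittings). In the paper, the $V_{0}$--vertices of commensuriser type are \emph{not} found by looking at the neighbours of $e$: they are produced by intersecting $H$ with a boundary group (Lemma \ref{LisVPCn}) and taking the commensuriser of the resulting $VPCn$ subgroup. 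One then has to prove that the first edge $f$ from such a vertex towards $e$ is a $VPC(n+1)$ torus edge (Lemma \ref{Xcrossestorus}), that $G(f)$ is commensurable with $H$ (Lemma \ref{HandG(f)arecommensurable}, which uses Proposition \ref{rem24} and the classification of $V_{0}$--vertices), and finally, with a second application to $Z=X\cap Y^{\ast}$ and Lemma \ref{paralleledges}, that $e$ itself must coincide with one of at most four specific edges sitting between the two commensuriser vertices. This localization of $e$ is the crux, and it is entirely absent from your outline. Without it you have neither ruled out the annulus case nor established the structure you need for the incommensurability argument.

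Your closing step -- producing two $VPCn$ splittings of $G$ inside $H$ and arguing incommensurability from canonicality of edge splittings -- is sound once the structural facts are in place, and it matches the reverse direction of Proposition \ref{characterizespecialcanonical}. But as written, the proposal replaces the paper's commensurizer-based localization with assertions, so it does not yet constitute a proof.
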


\begin{remark}
Proposition \ref{specialcanonicaltoruscfrossesa.i.setoverH} shows that these
two conditions are mutually exclusive. Note that if $\partial G$ is empty, the
decomposition $\Gamma_{n+1}(G)$ is an algebraic regular neighbourhood of all
almost invariant subsets of $G$ over a $VPC(n+1)$ subgroup, so that all the
edge splittings of $\Gamma_{n+1}(G)$ and of $\Gamma_{n+1}^{c}(G)$ are
canonical, by definition.
\end{remark}

We start by setting up some notation. The main step of the start of the
argument is discussed in Section 6 of \cite{SS05} in a different context.
There the authors used it to show that $n$--canonical almost invariant sets
over $VPC(n+1)$ groups are automatically adapted to the boundary. Here we use
it differently.

Let $(G,\partial G)$ be an orientable $PD(n+2)$ pair, and let $T$ denote the
universal covering $G$--tree of the graph of groups $\Gamma_{n,n+1}^{c}(G)$.
Let $(M,\partial M)$ be a $K(\pi,1)$ pair with a decomposition mimicking the
decomposition $\Gamma_{n,n+1}^{c}$. This induces a decomposition of the
universal cover $(\widetilde{M},\partial\widetilde{M})$ of $({M,\partial M)}$,
and we have an equivariant map $\widetilde{M}\rightarrow T$ preserving the
decompositions. If $v$ is a vertex of $\Gamma_{n,n+1}^{c}$ or of $T$, the
corresponding subspaces of $M$ or of $\widetilde{M}$ will be denoted by
{$M_{v}$ or }$\widetilde{M}_{v}$ respectively, and similarly for edges.

\begin{lemma}
\label{ThereisXnotadaptedtodGcrossinge}Let $(G,\partial G)$ be an orientable
$PD(n+2)$ pair such that $G$ is not $VPC$, and let $e$ be an edge of $T$ such
that the associated splitting of $G$ is not canonical. Then there is an almost
invariant set $X$ over a $VPC(n+1)$ subgroup $H$ of $G$ which is not adapted
to $\partial G$ and crosses $e$.
\end{lemma}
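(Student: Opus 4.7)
The strategy is to unpack the definition of ``canonical'' from the paragraph just preceding the main theorem, and then successively eliminate the possible types of crossing almost invariant set using what is already known about $\Gamma_{n,n+1}^{c}(G)$ from \cite{SS05} and Theorem \ref{thm21}.

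First, by Theorem \ref{thm21}(3) the edge splitting $\sigma$ associated to $e$ is dual to an essential annulus or torus in $(G,\partial G)$, so the a.i.~subset of $G$ dual to $\sigma$ is itself an element of $\mathcal{E}_{n,n+1}(G)$. Since $\sigma$ is not canonical, by definition there must exist some $X\in\mathcal{E}_{n,n+1}(G)$ which crosses $\sigma$, i.e., crosses $e$. In particular $X$ is a nontrivial almost invariant subset of $G$ over a $VPCn$ or $VPC(n+1)$ subgroup $H$, and it remains to show that $H$ is necessarily $VPC(n+1)$ and that $X$ is necessarily not adapted to $\partial G$.

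Second, I rule out $H$ being $VPCn$. By the definition of the family $\mathcal{F}_{n,n+1}$, every a.i.~subset of $G$ over a $VPCn$ subgroup lies in $\mathcal{F}_{n,n+1}$. Since $\Gamma_{n,n+1}(G)$ is by construction the algebraic regular neighbourhood of $\mathcal{F}_{n,n+1}$, the defining property from \cite{SS03} gives that every element of $\mathcal{F}_{n,n+1}$ is enclosed by some $V_0$-vertex and hence has intersection number zero with every edge splitting of $\Gamma_{n,n+1}(G)$. Because the edge splittings of $\Gamma_{n,n+1}(G)$ and $\Gamma_{n,n+1}^{c}(G)$ coincide, this contradicts the fact that $X$ crosses $e$, so $H$ must be $VPC(n+1)$.

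Third, I rule out $X$ being adapted to $\partial G$. By Theorem \ref{thm21}(4), any nontrivial almost invariant subset of $G$ which is over a $VPC(n+1)$ subgroup and adapted to $\partial G$ is enclosed by some $V_0$-vertex of $\Gamma_{n,n+1}^{c}(G)$, and therefore again has intersection number zero with every edge splitting of $\Gamma_{n,n+1}^{c}(G)$. This contradicts the assumption that $X$ crosses $e$, and hence $X$ is not adapted to $\partial G$, which is the desired conclusion. The only non-trivial ingredient is the black-box implication ``enclosed by a $V_0$-vertex $\Rightarrow$ intersection number zero with every edge splitting''; this is part of the standard theory of algebraic regular neighbourhoods in \cite{SS03} and is the only point worth citing carefully — everything else is elimination.
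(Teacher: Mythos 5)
Your proof is correct and follows essentially the same route as the paper's. The paper phrases the elimination in one step via the enlarged family $\mathcal{G}_{n,n+1}$ (sets over $VPCn$ together with adapted sets over $VPC(n+1)$), noting that $\mathcal{G}_{n,n+1}$ has the same regular neighbourhood $\Gamma_{n,n+1}(G)$ and so none of its elements can cross $e$; you split the same fact into two elimination steps using $\mathcal{F}_{n,n+1}$ and Theorem \ref{thm21}(4), but the logic and the facts invoked are identical.
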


\begin{proof}
Recall that $\Gamma_{n,n+1}^{c}(G)$ is the completion of the reduced algebraic
regular neighbourhood $\Gamma_{n,n+1}(G)$ of $\mathcal{F}_{n,n+1}$ in $G$,
where $\mathcal{F}_{n,n+1}$ denotes the family of equivalence classes of all
nontrivial almost invariant subsets of $G$ which are over a $VPCn$ subgroup,
together with the equivalence classes of all $n$--canonical almost invariant
subsets of $G$ which are over a $VPC(n+1)$ subgroup. In \cite{SS05}, the
authors showed that $n$--canonical almost invariant subsets of $G$ over
$VPC(n+1)$ subgroups are automatically adapted to $\partial G$. Further, if we
enlarge the family $\mathcal{F}_{n,n+1}$ to include all almost invariant sets
over $VPC(n+1)$ subgroups which are adapted to $\partial G$, the new family
$\mathcal{G}_{n,n+1}$ has the same regular neighbourhood $\Gamma_{n,n+1}(G)$.
In particular, no set in $\mathcal{G}_{n,n+1}$ can cross any edge of
$\Gamma_{n,n+1}^{c}(G)$. Thus our assumption on $e$ implies that there must be
an almost invariant set $X$ over a $VPC(n+1)$ subgroup $H$ of $G$ which is not
adapted to $\partial G$ and crosses $e$, as required.
\end{proof}

\begin{lemma}
\label{LisVPCn}Let $(G,\partial G)$ be an orientable $PD(n+2)$ pair such that
$G$ is not $VPC$, and let $X$ be an almost invariant set over a $VPC(n+1)$
subgroup $H$ of $G$ which is not adapted to $\partial G$. Then the following
statements hold:

\begin{enumerate}
\item There is a group $S$ with a conjugate in $\partial G$ such that $L=H\cap
S$ is $VPCn$, and $X\cap S$ and $X^{\ast}\cap S$ are both $H$--infinite.

\item There is a $V_{0}$--vertex $v$ of $T$ of commensuriser type which
encloses all $L$--almost invariant subsets of $G$. Further, $\widetilde{M}%
_{v}\cap\partial\widetilde{M}$ is non-empty, ${v}$ is of Seifert type or of
torus type, and {$G(v)=Comm_{G}(L)=N_{G}(L)$ contains }$H$.
\end{enumerate}
\end{lemma}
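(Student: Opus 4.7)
The plan is to prove (1) and (2) in turn. Part (1) will follow from unpacking the definition of being not adapted to $\partial G$ together with relative Poincar\'e duality, and Part (2) will follow from Theorem \ref{thm21} applied to the essential annulus built from $L$.

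For Part (1), I would start by lifting to the cover $M_H$ of $M$ with fundamental group $H$: the finitely supported cocycle representing $X$ must restrict nontrivially to some component of the preimage of $\partial M$ (otherwise $X$ would be adapted to $\partial G$). Such a component corresponds to a double coset $HgS_0$ for some $S_0\in\partial G$ and has fundamental group $H\cap gS_0 g^{-1}$. Setting $S=gS_0 g^{-1}$ and $L=H\cap S$, the nontriviality of the restriction is precisely the statement that both $X\cap S$ and $X^\ast\cap S$ are $H$-infinite, which gives the first half of (1). To identify $L$ as $VPCn$, I would then observe that $X\cap S$ is a nontrivial $L$--almost invariant subset of $S$, so $(S,L)$ has more than one end. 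Since $S$ is a $PD(n+1)$ group, the relative Poincar\'e duality theory used throughout Section \ref{section:prelim} forces $L$ to be $PDn$. As $L\subseteq H$ with $H$ being $VPC(n+1)$, $L$ is $VPC$, and a $VPC$ group which is $PDn$ has Hirsch length $n$, so $L$ is $VPCn$.

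For Part (2), $L$ is $VPCn$ and peripheral, so the annulus construction of Section \ref{section:prelim} produces an essential annulus in $(G,\partial G)$ with group $L$, whose dual $L$--almost invariant subset $Y$ lies in $\mathcal{F}_{n,n+1}$. By Theorem \ref{thm21}, $Y$ is enclosed by a unique $V_0$--vertex $v$ of $\Gamma_{n,n+1}^{c}(G)$, and a lift of the annulus sits inside $\widetilde{M}_v$ with boundary in $\partial\widetilde{M}$, giving $\widetilde{M}_v\cap\partial\widetilde{M}\neq\emptyset$. I would next argue that $v$ encloses every $L$--almost invariant subset of $G$: they all lie in $\mathcal{F}_{n,n+1}$ and are permuted by the action of $\mathrm{Comm}_G(L)$, which must preserve the unique enclosing vertex. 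To identify the type of $v$, I would use that $L$ is a codimension-one subgroup of the $VPC(n+1)$ group $H$, so every $H$-conjugate of $L$ is commensurable with $L$, giving $H\subseteq\mathrm{Comm}_G(L)\subseteq G(v)$. Comparison with the four cases in Theorem \ref{thm21}(1) then rules out the other three: (1a), (1b), and (1c) do not admit a $VPC(n+1)$ subgroup strictly containing the $VPCn$ subgroup associated with the enclosed a.i.\ sets. Thus $v$ is of commensuriser type with $G(v)=\mathrm{Comm}_G(L)\supseteq H$. Finally, $v$ cannot be of solid torus type because $H/L$ is infinite while $G(v)/L$ would be finite in that case, leaving $v$ of Seifert or torus type.

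The subtlest point I expect is the identification $\mathrm{Comm}_G(L)=N_G(L)$, since these can differ in general for $VPC$ subgroups. I would handle this via the explicit description of commensuriser $V_0$--vertices in \cite{SS05}, where the vertex group is the normaliser of a canonical finite-index subgroup of the fibre: the equality $\mathrm{Comm}_G(L)=N_G(L)$ should hold after replacing $L$ by that canonical finite-index subgroup, which is implicit in the choice of annulus associated with $v$. Carefully ruling out cases (1a)--(1c) for the type of $v$ is also a place where one must attend to the precise definitions recalled from \cite{SS05}.
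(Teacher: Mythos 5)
Your proof of part (1) essentially matches the paper: both pass to the cover $M_H$, locate a boundary group $S$ with $X\cap S$ and $X^\ast\cap S$ both $H$--infinite, pass to $L=H\cap S$ and deduce $e(S,L)\geq 2$, then use that $S$ is $PD(n+1)$ together with $L$ being $VPC$ to force Hirsch length $n$. One caveat: your intermediate claim that $e(S,L)\geq 2$ ``forces $L$ to be $PDn$'' is false as a general statement (a non-cyclic free subgroup of a surface group is a counterexample); what the duality argument actually gives is a lower bound on homological dimension, which suffices only because you already know $L$ is $VPC$. The paper phrases this more carefully.

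Your proof of part (2), however, takes a genuinely different route, and it has several real gaps. First, you assert that the annulus construction produces an essential annulus with group $L$ ``since $L$ is peripheral,'' but being conjugate into $\partial G$ does not by itself yield an essential annulus; one needs $L$ to appear in (at least) two boundary components or to have a boundary subgroup of index $2$. The paper manufactures this by replacing $H$ by a finite--index subgroup so that $L\triangleleft H$ with $H/L\cong\mathbb{Z}$, then observing that $H/L$ acting on $M_L$ produces infinitely many boundary components of $M_L$ with fundamental group $L$, hence many essential annuli. Second, your claim that the dual $L$--a.i.\ subset $Y$ is enclosed by a \emph{unique} $V_0$--vertex is not a feature of the regular neighbourhood construction (isolated elements of $\mathcal{F}_{n,n+1}$ are enclosed by both endpoints of an edge). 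Third, the argument that ``the $L$--a.i.\ subsets of $G$ are permuted by $\mathrm{Comm}_G(L)$ which must preserve the unique enclosing vertex'' is not sound: an element $g\in\mathrm{Comm}_G(L)$ carries an $L$--a.i.\ set to a $gLg^{-1}$--a.i.\ set, not an $L$--a.i.\ set, so $\mathrm{Comm}_G(L)$ does not act on the family of $L$--a.i.\ subsets, and even if it did, this would not by itself show that a single $V_0$--vertex encloses them all. Fourth, the decisive step --- that the enclosing vertex is actually of \emph{commensuriser} type rather than, say, isolated or of interior Seifert type --- is exactly the content of a result in \cite{SS05}: if $M_L$ contains at least four essential annuli, there is a $V_0$--vertex of commensuriser type enclosing all $L$--a.i.\ subsets of $G$. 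Your attempted elimination of cases 1a)--1c) of Theorem \ref{thm21} does not substitute for this (for instance, case 1a) with $G(v)$ equal to $H$ is not obviously excluded by the existence of a $VPC(n+1)$ subgroup containing $L$). Finally, the equality $\mathrm{Comm}_G(L)=N_G(L)$ is not merely a cosmetic point to be absorbed by ``replacing $L$ by a canonical finite--index subgroup''; the paper deduces it from Lemma 5.10 of \cite{SS05} in the Seifert case and from Lemma 1.10 of \cite{SS05} in the toral case (where it uses that $G(v)$ is $VPC(n+1)$ and splits over $L$, so $L$ is normal in $G(v)$ with quotient $\mathbb{Z}$ or $\mathbb{Z}_2\ast\mathbb{Z}_2$), and the solid torus type is excluded because $H/L$ is infinite. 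In short, part (2) needs the normality/annulus-multiplicity argument and the specific citations from \cite{SS05}; your version leaves the existence and type of the vertex $v$, and the universality of its enclosing property, unestablished.
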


\begin{proof}
1) Since $X$ is not adapted to $\partial G$, there is a group $S$ in $\partial
G$, and $g\in G$ such that $X\cap gS$ and $X^{\ast}\cap gS$ are both
$H$--infinite. By replacing $S$ by a conjugate if needed, we can arrange that
$X\cap S$ and $X^{\ast}\cap S$ are both $H$--infinite. Consider the component
$\Sigma$ of $\partial\widetilde{M}$ with stabilizer $S$, and identify $X$ and
$X^{\ast}$ with subsets of the $0$--skeleton $\widetilde{M}_{0}$ of
$\widetilde{M}$. We have that the intersections of $X$ and $X^{\ast}$ with the
$0$--skeleton $\Sigma_{0}$ of $\Sigma$ are $H$--infinite. Hence they are
$L$--infinite, where $L=H\cap S$. Thus $e(S,L)\geq2$. As $S$ is $PD(n+1)$ and
$H$ is $VPC$, it follows that $L$ is $VPCn$.

2) By replacing $H$ by a subgroup of finite index if necessary, we can assume
that $L$ is normal in $H$ with $L\backslash H$ infinite cyclic. Let
$P_{L}:\widetilde{M}\rightarrow M_{L}$, and $P_{H}:\widetilde{M}\rightarrow
M_{H}$ denote the covering projections, and let $\Sigma_{L}$ and $\Sigma_{H}$
denote the images of $\Sigma$ in $\partial M_{L}$ and $\partial M_{H}$
respectively. As $L\backslash H$ acts on $M_{L}$, there are infinitely many
translates of $\Sigma_{L}$ each with fundamental group $L$, and thus
infinitely many essential annuli in $M_{L}$. In \cite{SS05}, it was shown
that, if the number of essential annuli in $M_{L}$ is at least $4$, there is a
$V_{0}$--vertex ${v}$ of $T$ of commensuriser type which encloses all
$L$--almost invariant subsets of $G$. Hence the stabilizer, $G(v)$, of $v$
contains $H$, and $\widetilde{M}_{v}$ intersects $\partial\widetilde{M}$. It
is possible that $\widetilde{M}_{v}\cap\partial\widetilde{M}$ contains
$\Sigma$. This happens if $S$ is $VPC(n+1)$. In any case, $v$ is a $V_{0}%
$--vertex with $H\subset G(v)$, and $\widetilde{M}_{v}\cap\partial
\widetilde{M}$ is non-empty, and $L$ stabilizes $\widetilde{M}_{v}\cap
\partial\widetilde{M}$. Thus $v$ is either of Seifert type or of toral type
(see Definition 3.12 of \cite{SS05}). If $v$ is of Seifert type, Lemma 5.10 of
\cite{SS05} tells us that {$G(v)=Comm_{G}(L)=N_{G}(L)$}. If $v$ is of toral
type, we use the fact that $G(v)$ is $VPC(n+1)$ and splits over $L$. Now Lemma
1.10 of \cite{SS05} implies that $L$ is normal in $G(v)$ with quotient
$\mathbb{Z}$ or $\mathbb{Z}_{2}\ast\mathbb{Z}_{2}$. It follows that in this
case also {$G(v)=Comm_{G}(L)=N_{G}(L)$.}
\end{proof}

Note that if $X$ crosses an edge $e$ of $T$, Lemma \ref{LisVPCn} does not tell
us that $e$ is incident to the vertex $v$ of $T$ obtained in part 2) of the
above lemma. However the next lemma assures us that $X$ must cross some edge
incident to $v$.

\begin{lemma}
\label{Xcrossestorus}Let $(G,\partial G)$ be an orientable $PD(n+2)$ pair such
that $G$ is not $VPC$, let $T$ denote the universal covering $G$--tree of the
graph of groups $\Gamma_{n,n+1}^{c}(G)$, and let $e$ be an edge of $T$. Let
$X$ be an almost invariant set over a $VPC(n+1)$ subgroup $H$ of $G$ which is
not adapted to $\partial G$ and crosses $e$.

Using the notation of Lemma \ref{LisVPCn}, if $f$ is the first edge on the
path in $T$ from $v$ to $e$, then $G(f)$ is $VPC(n+1)$, so the associated
splitting of $G$ is dual to a torus, and $X$ crosses this torus.
\end{lemma}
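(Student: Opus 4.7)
By Lemma \ref{LisVPCn}, $H \subset G(v)$, so $H$ fixes $v$ in $T$ and permutes the edges of $T$ incident to $v$. The plan is to split into two cases depending on whether $H \subset G(f)$.

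In the case $H \not\subset G(f)$, pick $h \in H$ with $hf \neq f$. Orient $Y_f$ so that $v$ lies on the $Y_f^{*}$ side; then $e$ lies on the $Y_f$ side, since $f$ separates $v$ from $e$. Suppose for contradiction that $X$ does not cross $f$. A four-corner analysis, using that $X$ crosses $e$ which lies on the $Y_f$ side, rules out the nested configurations in which $X$ or $X^{*}$ is essentially contained in $Y_f^{*}$, leaving only $X \subset Y_f$ essentially or $X^{*} \subset Y_f$ essentially. Applying $h$, and using that $hX$ is $H$-equivalent to $X$, gives the same inclusion with $Y_{hf}$ in place of $Y_f$. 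Since $f$ and $hf$ are distinct edges at $v$, they lie in different components of $T - \{v\}$, so $Y_f$ and $Y_{hf}$ are disjoint as subsets of $G$ at the level of vertex-piece assignments in $\widetilde{M}$; hence $X$ or $X^{*}$ is essentially empty, contradicting nontriviality. Thus $X$ crosses $f$. To see that $G(f)$ is $VPC(n+1)$, suppose it were $VPCn$; as a $VPCn$ subgroup of $G(v) = \mathrm{Comm}_G(L)$, it would be commensurable with $L$, so $Y_f$ would be, up to passing to a subgroup of finite index, an $L$-almost invariant subset of $G$, and hence enclosed by $v$. The interplay between this enclosing, the $H/L \cong \mathbb{Z}$ structure of $H$, and the fact that $X$ crosses both $f$ and $e$ would yield a structural contradiction with the reduced nature of $\Gamma_{n,n+1}^{c}$.

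In the case $H \subset G(f)$, the edge stabiliser $G(f)$ contains the $VPC(n+1)$ group $H$; since edge groups of $\Gamma_{n,n+1}^{c}$ are $VPCn$ or $VPC(n+1)$, we conclude $G(f)$ is $VPC(n+1)$ and $H$ has finite index in $G(f)$. To show $X$ crosses $f$ here I would use that $Y_f$ is adapted to $\partial G$ (being an edge splitting of $\Gamma_{n,n+1}^{c}$) while $X$ is not, together with the fact that $\Sigma \cap \widetilde{M}_v$ lies on the $Y_f^{*}$ side of $f$. Assuming $X$ and $Y_f$ nested, the adaptedness of $Y_f$ to $\partial G$ forces the relevant boundary coset $gS$ (from Lemma \ref{LisVPCn}(1)) to be essentially contained in $Y_f$ or in $Y_f^{*}$; combining this with both $X \cap gS$ and $X^{*} \cap gS$ being $H$-infinite and tracking where $\Sigma$ meets the pieces of $\widetilde{M}$ along the tree then produces a contradiction.

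The main obstacle lies in the two finer sub-steps: excluding $G(f)$ being $VPCn$ in the first case, and proving $X$ crosses $f$ in the second. Both require combining the enclosing property of $v$ for $L$-almost invariant sets with the specific non-adapted structure of $X$ along the boundary component $\Sigma$, and the translate argument used in the first part of Case 1 is no longer available when $H$ stabilises $f$.
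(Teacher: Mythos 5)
Your plan has a genuine gap exactly where you flag the difficulty: you do not actually prove that $G(f)$ cannot be $VPCn$, and your Case~2 crossing argument is only a sketch. The paper handles both parts in a single unified argument, with no case split on whether $H\subset G(f)$.

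The key idea you are missing for the $VPCn$ exclusion is this. Orient the notation as in the paper: let $Z$ be the almost invariant set for $e$ with $G(v)\subset Z$, and let $Y$ be the set for $f$ with $Y\subset Z$, so that $Y^{\ast}\supset Z^{\ast}$ is the far side of $f$. Since $X$ crosses $Z$ and $Z^{\ast}\subset Y^{\ast}$, both $X\cap Y^{\ast}$ and $X^{\ast}\cap Y^{\ast}$ are $H$--infinite. Now suppose $G(f)$ is $VPCn$; being an edge group adjacent to the commensuriser-type vertex $v$, it is commensurable with $L$, so $\delta Y$ is $L$--finite. Also $\delta X$ is $H$--finite and $H\subset G(v)$, so $\delta X$ lies in a bounded neighbourhood of $\widetilde M_v$, and hence $\delta X\cap Y^{\ast}$ is $L$--finite. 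Using $\delta(X\cap Y^{\ast})=(X\cap\delta Y)\cup(\delta X\cap Y^{\ast})$, one sees that $X\cap Y^{\ast}$ and $X^{\ast}\cap Y^{\ast}$ are each nontrivial $L$--almost invariant subsets of $G$ \emph{contained in $Y^{\ast}$}. But such a set cannot be enclosed by $v$ (neither it nor its complement sits in $Y$ up to $L$--finiteness), contradicting Lemma~\ref{LisVPCn}, which says $v$ encloses \emph{all} $L$--almost invariant subsets of $G$. Your proposed argument instead observes that $Y_f$ itself is an $L'$--almost invariant set enclosed by $v$ and appeals to a vague "structural contradiction with the reduced nature of $\Gamma_{n,n+1}^c$"; this goes nowhere, since $Y_f$ being enclosed by $v$ is automatic for any edge at $v$, and reducedness is not the relevant property. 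The contradiction must come from producing an $L$--almost invariant set that lives on the \emph{far} side of $f$.

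Once $G(f)$ is $VPC(n+1)$, the crossing is quick: since $\widetilde M_f$ is a torus it meets no component of $\partial\widetilde M$, so the component $\Sigma$ with stabiliser $S$ lies entirely on the $v$--side, giving $S\subset Y$. Since $X\cap S$ and $X^{\ast}\cap S$ are both $H$--infinite, the two corners $X\cap Y$ and $X^{\ast}\cap Y$ are $H$--infinite, and together with the two corners on the $Y^{\ast}$ side already known to be $H$--infinite, $X$ crosses $Y$. This argument does not require knowing whether $H\subset G(f)$, which makes your case split unnecessary. Your translate-by-$h$ argument in Case~1 is a correct way to see that $X$ crosses $f$ when $H\not\subset G(f)$, but it does not bear on the rank of $G(f)$ and so cannot replace the $L$--almost-invariant-corner argument above.
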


\begin{remark}
\label{torustypeimpliestype2)}If $v$ is of torus type, the fact that an edge
incident to $v$ determines a splitting of $G$ dual to a torus means that $v$
is of torus type 2).
\end{remark}

\begin{proof}
Let $Z$ and $Z^{\ast}$ denote the almost invariant sets associated to $e$
chosen so that $Z$ contains $G(v)$, and let $Y$ and $Y^{\ast}$ denote the
almost invariant sets associated to $f$ chosen so that $Y\subset Z$ and
$Y^{\ast}\supset Z^{\ast}$. As $X$ {crosses} $Z$, we know that $X\cap Z^{\ast
}$ and $X^{\ast}\cap Z^{\ast}$ are both $H$-infinite. As $Y^{\ast}\supset
Z^{\ast}$, it follows that $X\cap Y^{\ast}$ and $X^{\ast}\cap Y^{\ast}$ are
also both $H$-{infinite}.

Now suppose that $G(f)$ is $VPCn$. As $v$ is of Seifert type or of toral type,
the edge group $G(f)$ must be commensurable with $L$. Thus $\delta Y$ is
$L$--finite. As $H\subset G(v)$, and $\delta X$ is $H$--finite, it follows
that $\delta X$ lies in a bounded neighbourhood of $\widetilde{M}_{v}$. As
$G(f)$ is commensurable with $L$, this implies that $\delta X\cap Y^{\ast}$ is
$L$--finite. As $\delta Y$ is also $L$--finite, it follows that $X\cap
Y^{\ast}$ and $X^{\ast}\cap Y^{\ast}$ each have $L$--finite coboundary. For
$\delta(X\cap Y^{\ast})=(X\cap\delta Y^{\ast})\cup(\delta X\cap Y^{\ast})$. We
conclude that each of $X\cap Y^{\ast}$ and $X^{\ast}\cap Y^{\ast}$ is a
nontrivial $L$--almost invariant subset of $G$ contained in $Y^{\ast}$. In
particular, they cannot be enclosed by {$v$, which is a contradiction. }This
contradiction shows that $G(f)$ must be $VPC(n+1)$, so that the associated
splitting of $G$ is dual to a torus, as required. It remains to show that $X$
crosses this torus.

As $G(f)$ determines a torus, it follows that the component $\Sigma$ of
$\partial\widetilde{M}$ with stabilizer $S$ cannot meet this torus, and so
must lie on the same side of $\widetilde{M}_{f}$ as does $\widetilde{M}_{v}$.
In particular, $S\subset Y$. As $X\cap S$ and $X^{\ast}\cap S$ are both
$H$--infinite, it follows that $X\cap Y$ and $X^{\ast}\cap Y$ are both
$H$--infinite. Hence all four corners of the pair $(X,Y)$ are $H$--infinite,
so that $X$ crosses $Y$, as required.
\end{proof}

Next we will show that $H$ and $G(f)$ must be commensurable subgroups of $G$.

We split $G$ along the torus $G(f)$ to obtain a new $PD(n+2)$ pair
$(G^{\prime},\partial G^{\prime})$ (by Theorem 8.1 of \cite{BE01}) with
$(G^{\prime},\partial G^{\prime})$ containing $G(w)$, where $v$ and $w$ are
the vertices of $f$. Correspondingly, $M$ is split along $M_{f}$ to obtain a
new space $N$ containing $M_{w}$. Thus, $\widetilde{M}$ is split along
$\widetilde{M}_{f}$ and its translates to obtain a new space $\widetilde{N}$
containing $\widetilde{M}_{w}$. In particular, the boundary of $\widetilde{N}$
consists of boundary components of $\widetilde{M}$ together with translates of
$\widetilde{M}_{f}$.

\begin{lemma}
\label{thereisannulusA}Using the notation of Lemma \ref{Xcrossestorus},
suppose that $H$ and $G(f)$ are not commensurable, and let $L^{\prime}$ denote
$H\cap G(f)$. Then $L^{\prime}$ is $VPCn$ and contains $L$ with finite index,
and there is an essential annulus in $N$, carrying $L^{\prime}$, which lifts
to an annulus $A$ in $P_{L^{\prime}}(\tilde{N})$ from $P_{L^{\prime}%
}(\widetilde{M}_{f})$ to a component of $P_{L^{\prime}}(\partial\tilde{N})$.
\end{lemma}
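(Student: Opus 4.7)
The plan is to reduce the lemma to an application of the $\bar{\beta}$--construction of Section \ref{section:prelim}, after first pinning down $L'$ as a group.

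\textbf{Identifying $L'$.} Since $f$ is incident to $v$ and $G(v) = N_G(L)$ by Lemma \ref{LisVPCn}(2), the edge stabiliser $G(f)$ normalises $L$. By the description of $V_0$--vertices of commensuriser type in \cite{SS05} (Seifert type, or torus type~2 by Remark \ref{torustypeimpliestype2)}), the subgroup $L$ plays the role of the fibre of the commensuriser vertex group $G(v)$ and is contained in every $VPC(n+1)$ edge stabiliser adjacent to $v$; in particular $L \subset G(f)$, and hence $L \subset H \cap G(f) = L'$. Since $H$ and $G(f)$ are $VPC(n+1)$ but non-commensurable by hypothesis, $L'$ has Hirsch length at most $n$. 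Combined with the inclusion $L \subset L'$, this forces $L'$ to be $VPCn$ and $L$ to have finite index in $L'$.

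\textbf{Building the annulus.} Work in the $L'$--cover $P_{L'}(\widetilde{N})$. Because $L' \subset G(f)$, the image $P_{L'}(\widetilde{M}_f)$ is a $K(L',1)$ boundary component of $P_{L'}(\widetilde{N})$. By Lemma \ref{Xcrossestorus}, $\Sigma \subset Y$ and all four corners of the pair $(X,Y)$ are $H$--infinite; by Lemma \ref{LisVPCn}(1) both $X \cap S$ and $X^\ast \cap S$ are $H$--infinite. Pushing the $\mathbb{Z}_2$--set $X \cap Y$ into $P_{L'}(\widetilde{N})$ therefore yields a subset with $L'$--finite coboundary (since $\delta X$ is $H$--finite hence $L'$--finite, and $\delta Y$ is $G(f)$--finite hence $L'$--finite) whose coboundary meets both $P_{L'}(\widetilde{M}_f)$ and the component $\Sigma'$ of $P_{L'}(\partial\widetilde{N})$ containing the image of $\Sigma$. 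Now apply the Section \ref{section:prelim} dictionary in reverse: this data defines a non-trivial class $\bar{\beta} \in H^1(G'; \mathbb{Z}_2[L' \backslash G'])$ for the split $PD(n+2)$ pair $(G',\partial G')$, whose restriction to $H^1(L';\mathbb{Z}_2)$ vanishes by Swarup's Theorem~2 of \cite{Swarup}. Equivalently, by Poincar\'{e}--Lefschetz duality for $(G',\partial G')$, $\bar{\beta}$ is Poincar\'{e} dual to a relative fundamental class in $H_{n+1}(P_{L'}(\widetilde{N}),\partial P_{L'}(\widetilde{N});\mathbb{Z}_2)$ projecting non-trivially onto both $P_{L'}(\widetilde{M}_f)$ and $\Sigma'$. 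This class is realised by an essential annulus $\theta:(\Lambda_{L'},\partial\Lambda_{L'})\to(N,\partial N)$ carrying $L'$, lifting to the desired $A\subset P_{L'}(\widetilde{N})$.

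\textbf{Main obstacle.} The delicate step is the non-vanishing of the two boundary projections of $\bar{\beta}$. Non-vanishing on $P_{L'}(\widetilde{M}_f)$ is dictated by the fact that $X$ crosses the torus at $f$, i.e.\ by Lemma \ref{Xcrossestorus}; non-vanishing on $\Sigma'$ is dictated by the fact that $X$ is not adapted to $\partial G$ along $S$, i.e.\ by Lemma \ref{LisVPCn}(1). Once both non-vanishings are established, the relative fundamental class is forced to run between the two prescribed boundary pieces, and the Section \ref{section:prelim} recipe delivers an annulus from $P_{L'}(\widetilde{M}_f)$ to $\Sigma'$, as required.
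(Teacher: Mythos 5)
Your first paragraph, pinning down $L'$, is correct and matches the paper's argument. The second paragraph has genuine errors, the most serious of which concern which side of the torus $\widetilde M_f$ you are working on.

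The space $\widetilde N$ is the piece of $\widetilde M$ cut along translates of $\widetilde M_f$ that contains $\widetilde M_w$, and $w$ is the vertex of $f$ \emph{other than} $v$. In the notation of the proof of Lemma \ref{Xcrossestorus}, $Y$ is the side of $f$ containing $G(v)$ (and $\Sigma$), while $Y^\ast$ is the side containing $w$; thus $\widetilde N$ lives on the $Y^\ast$ side, not the $Y$ side. The relevant set is therefore $X\cap Y^\ast$ (and its complementary partner $X^\ast\cap Y^\ast$), not $X\cap Y$. Moreover $\Sigma$ is \emph{not} contained in $\widetilde N$ at all, so your "component $\Sigma'$ of $P_{L'}(\partial\widetilde N)$ containing the image of $\Sigma$" does not make sense: the second boundary component that the annulus hits is some other component of $\partial\widetilde N$, produced by the failure of $X\cap Y^\ast$ to be adapted to $\partial G'$, not by the original $S$ from Lemma \ref{LisVPCn}.

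Two further problems. First, the assertion "$\delta X$ is $H$--finite hence $L'$--finite" is false: $L'$ has infinite index in $H$, so an $H$-finite set is $L'$-infinite. What is true, and what the paper actually proves, is that $\delta X\cap Y^\ast$ is $L$-finite, because $\delta X$ lies in a bounded neighbourhood of $\widetilde M_v$ and must first cross $\widetilde M_f$ to reach $Y^\ast$; this localisation near $\widetilde M_f$ is what gives $L'$-finiteness. Second, you omit the crucial step of showing that $X\cap\delta Y$ (equivalently $X\cap\delta Y^\ast$) is $L$-infinite. This is needed both to rule out $X\cap Y^\ast$ being a nontrivial $L$-almost invariant set not enclosed by $v$ (a contradiction, since $v$ encloses all such sets), and to guarantee that the resulting annulus actually meets $P_{L'}(\widetilde M_f)$ nontrivially. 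Without it, the non-vanishing you flag as the "main obstacle" is not established. The appeal to Poincar\'e--Lefschetz duality and the $\bar\beta$-construction is a reasonable way to think about producing an annulus from an almost invariant set, but the direction you cite from Section \ref{section:prelim} goes from annuli to a.i.\ sets; the paper instead produces the annulus directly from the nontrivial $L'$-almost invariant subsets $X\cap Y^\ast$ and $X^\ast\cap Y^\ast$ of $G'$.
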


\begin{proof}
Note that part 2) of Lemma \ref{LisVPCn} tells us that {$G(v)=Comm_{G}%
(L)=N_{G}(L)$ contains }$H$. As $v$ is of Seifert type or of torus type, and
$G(f)$ is a boundary torus of $G(v)$, it follows that $G(f)$ also contains
$L$. In\ particular, the intersection $H\cap G(f)$ contains $L$. As $H$ and
$G(f)$ are not commensurable, it follows that $H\cap G(f)=L^{\prime}$ is
$VPCn$ and contains $L$ with finite index.

As in the proof of Lemma \ref{Xcrossestorus}, we consider the intersections
$X\cap Y^{\ast}$ and $X^{\ast}\cap Y^{\ast}$. Both sets are invariant under
$H\cap G(f)=L^{\prime}$. Again we know that $\delta X\cap Y^{\ast}$ must be
$L$--finite. Suppose that $X\cap\delta Y$ is $L$--finite. Then $X\cap Y^{\ast
}$ has $L$--finite coboundary and so $X\cap Y^{\ast}$ is a nontrivial
$L$--almost invariant set which is not enclosed by $v$, which is again a
contradiction. Thus $X\cap\delta Y$ must be $L$--infinite, and similarly
$X^{\ast}\cap\delta Y$ must be $L$--infinite. Note that the intersections of
$X\cap Y^{\ast}$ and $X^{\ast}\cap Y^{\ast}$ with the $0$--skeleton of
$\widetilde{N}$ have coboundaries (in $\widetilde{N}$) equal to $\delta X\cap
Y^{\ast}$ and $\delta X^{\ast}\cap Y^{\ast}$ respectively, each of which is
$L$--finite. As $X\cap Y^{\ast}$ and $X^{\ast}\cap Y^{\ast}$ contain
$X\cap\delta Y^{\ast}$ and $X^{\ast}\cap\delta Y^{\ast}$ respectively which
are both $L^{\prime}$--infinite, it follows that each determines a nontrivial
$L^{\prime}$--almost invariant subset of $G^{\prime}$. Hence there are
essential annuli in $N$, carrying $L^{\prime}$, one of which lifts to an
annulus $A$ in $P_{L^{\prime}}(\tilde{N})$ from $P_{L^{\prime}}(\widetilde{M}%
_{f})$ to a component of $P_{L^{\prime}}(\partial\tilde{N})$, as required.
\end{proof}

\begin{lemma}
\label{HandG(f)arecommensurable}Using the notation of Lemma
\ref{Xcrossestorus}, the subgroups $H$ and $G(f)$ of $G$ are commensurable.
\end{lemma}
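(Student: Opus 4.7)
The plan is to argue by contradiction. Assume $H$ and $G(f)$ are not commensurable, so Lemma \ref{thereisannulusA} produces an essential annulus in $N$ carrying $L' = H\cap G(f)$, which is $VPCn$ and contains $L$ with finite index, and which lifts to an annulus $A$ in $P_{L'}(\tilde{N})$ from $P_{L'}(\widetilde{M}_f)$ to a component of $P_{L'}(\partial\tilde{N})$. The goal is to use $A$ to produce an almost invariant subset of $G$ over the $VPCn$ subgroup $L'$ that crosses the splitting of $G$ dual to the torus $G(f)$. Since $f$ is an edge of $\Gamma_{n,n+1}^{c}(G)$, its associated splitting has intersection number zero with every member of $\mathcal{F}_{n,n+1}$, and in particular with every almost invariant subset of $G$ over a $VPCn$ subgroup, so such a crossing will give the required contradiction.

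Viewed in $\widetilde{M}$, the annulus $A$ has one boundary on $\widetilde{M}_f$ and the other on $\partial\widetilde{M}$, so $A$ is not by itself dual to a splitting of $G$. I would complete it to one by gluing on a companion annulus $A_v$ on the $v$-side of $\widetilde{M}_f$, with one boundary on $\widetilde{M}_f$ matching that of $A$ and the other on $\partial\widetilde{M}$. The existence of $A_v$ uses the structure of $v$ given by Lemma \ref{LisVPCn}: $v$ is of commensuriser type with $G(v) = Comm_G(L) = N_G(L)$, which also equals $Comm_G(L') = N_G(L')$ since $L$ and $L'$ are commensurable, and $\widetilde{M}_v\cap\partial\widetilde{M}$ is non-empty. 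By Remark \ref{torustypeimpliestype2)}, if $v$ is of torus type then it is of torus type 2). In either the Seifert type or torus type 2) case, the $v$-piece contains essential annuli running from the boundary torus corresponding to $G(f)$ to a component of $\partial\widetilde{M}$, and after a finite-cover adjustment and conjugation by a suitable element of $N_G(L') = G(v)$, such an annulus can be arranged to carry $L'$ with boundary on $\widetilde{M}_f$ matching that of $A$.

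Concatenating $A$ and $A_v$ along $\widetilde{M}_f$ yields an essential annulus $A'$ in $(G,\partial G)$ carrying $L'$, whose dual almost invariant subset of $G$ crosses the splitting dual to $f$, since $A'$ has non-empty portions on each side of $\widetilde{M}_f$. This is the required contradiction, so $H$ and $G(f)$ must in fact be commensurable. The main obstacle is the construction of $A_v$ with the correct $L'$-fibre structure: one must verify that the $VPC(n-1)$-by-Fuchsian structure at a Seifert-type $v$ (respectively the $VPCn$-by-virtually-cyclic structure at a torus-type-2) $v$) supplies an annulus whose fibre is exactly $L'$ (not merely commensurable with it) and whose boundary circle on $\widetilde{M}_f$ can be aligned with that of $A$ using the equality $N_G(L') = G(v)$. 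Once this gluing is carried out, the contradiction with $f$ being an edge of $\Gamma_{n,n+1}^c(G)$ is immediate.
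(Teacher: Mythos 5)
Your proposed route is genuinely different from the paper's, and the difference matters. The paper does not try to close $A$ up into an annulus of $(G,\partial G)$; instead it follows $A$ \emph{further into the tree on the $w$-side}: the sub-annulus of $A$ inside $\widetilde{M}_{w}$ joins two distinct edge groups, Proposition~\ref{rem24} then forces $w$ to be isolated (so $G(g)=G(f)$ is $VPC(n+1)$), and the piece of $A$ inside the next $V_{0}$--vertex $v'$ then yields a contradiction in every case, the crucial one being that an essential annulus carrying $L'$ inside $v'$ would make $G(v')$ commensurise $L$, contradicting the fact that $G(v)=Comm_{G}(L)$ is the unique commensuriser vertex. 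That uniqueness is what the paper's argument rests on, not an explicitly constructed annulus crossing the torus at $f$.

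The construction of $A_{v}$ that your plan relies on does not go through as written. The essential annuli in $\widetilde{M}_{v}$ that run from $\widetilde{M}_{f}$ to $\partial\widetilde{M}$ are vertical, so they carry (a group commensurable with) the fibre, hence with $L$; since $L\subsetneq L'$ with finite index, there is no reason such an annulus carries $L'$ exactly, and a ``finite-cover adjustment'' lowers a fibre rather than raises it, so it changes $A$ rather than fixing $A_{v}$. You also write $N_{G}(L')=G(v)$, but Lemma~\ref{LisVPCn} only gives $G(v)=Comm_{G}(L)=N_{G}(L)$; $Comm_{G}(L')=Comm_{G}(L)$ is true, but $N_{G}(L')$ need not equal $N_{G}(L)$. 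Even granting a glued annulus $A'$, essentiality in $(G,\partial G)$ is not checked (the two boundary groups could become conjugate in a single group of $\partial G$), and ``$A'$ has non-empty portions on each side of $\widetilde{M}_{f}$'' does not by itself show that all four corners of $(Z_{A'},Y_{f})$ are non-trivial. Finally, Lemma~\ref{thereisannulusA} only locates the far boundary of $A$ in a component of $P_{L'}(\partial\tilde{N})$, which may be a translate of $\widetilde{M}_{f}$ rather than a piece of $\partial\widetilde{M}$, in which case there is no boundary of $A$ in $\partial\widetilde{M}$ at all to pair with $A_{v}$. (Also, Seifert type means $VPCn$--by--Fuchsian; $VPC(n-1)$--by--Fuchsian is the $I$--bundle type.)
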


\begin{proof}
Suppose that $H$ and $G(f)$ are not commensurable, and let $L^{\prime}=H\cap
G(f)$. By Lemma \ref{thereisannulusA}, there is an essential annulus $A$ in
$P_{L^{\prime}}(\tilde{N})$ from $P_{L^{\prime}}(\widetilde{M}_{f})$ to a
component $\Sigma_{L^{\prime}}$ of $P_{L^{\prime}}(\partial\tilde{N})$.

As $A$ is essential, $P_{L^{\prime}}(\widetilde{M}_{f})$ and $\Sigma
_{L^{\prime}}$ must be distinct components of $P_{L^{\prime}}(\partial
\widetilde{N})$. Recall that $\widetilde{N}$ contains $\widetilde{M}_{w}$,
where $w$ is the $V_{1}$-vertex of the edge $f$. It follows that $A$ has a
sub-annulus $A^{\prime}$ which lies in $P_{L^{\prime}}(\widetilde{M}_{w})$,
and joins distinct boundary components. Thus the vertex $w$ has an incident
edge $g$, distinct from $f$, such that $G(g)$ contains $L^{\prime}$. As $G(g)$
is an edge group of $\Gamma_{n,n+1}^{c}(G)$, it must be $VPCn$ or $VPC(n+1)$.
In either case, we apply Proposition \ref{rem24}. Note that $T$ is the
universal covering $G$--tree of the graph of groups $\Gamma_{n,n+1}^{c}(G),$
not of $\Gamma_{n,n+1}(G)$, so this proposition is applicable. As $f$ and $g$
are distinct, case 1) of the conclusion is not possible. It follows that $w$
is an isolated vertex of $\Gamma_{n,n+1}^{c}(G)$. In particular, $G(g)=G(f)$
is $VPC(n+1)$. Let $v^{\prime}$ denote the $V_{0}$-vertex at the other end of
the edge $g$. The part $A_{1}$ of $A$ in $M_{v^{\prime}}$ is an essential
annulus carrying $L^{\prime}$ in the pair $(G(v^{\prime}),\partial
G(v^{\prime}))$. We need to recall from Theorem \ref{thm21} the possible types
of $V_{0}$--vertex of $\Gamma_{n,n+1}^{c}(G)$.

If $v^{\prime}$ is isolated, this would contradict the fact that
$\Gamma_{n,n+1}^{c}(G)$ is reduced.

If $v^{\prime}$ is of $VPC(n-1)$--by--Fuchsian type, and is of $I$--bundle
type, then each edge splitting for edges incident to $v^{\prime}$ would be
dual to an annulus. As the edge splitting dual to $g$ is a torus, this case
cannot occur.

If $v^{\prime}$ is of $VPCn$--by--Fuchsian type, and is of interior Seifert
type, then the essential annulus $A_{1}$ in $(G(v^{\prime}),\partial
G(v^{\prime}))$ projects to an annulus in the base $2$--orbifold. As
$v^{\prime}$ is of $VPCn$--by--Fuchsian type, the fundamental group of this
base orbifold is not virtually cyclic. It follows that it does not admit an
essential annulus. We conclude that this projected annulus is inessential,
which implies that $L^{\prime}$ is commensurable with the $VPCn$ fibre group
of $G(v^{\prime})$. But this means that $G(v^{\prime})$ commensurises
$L^{\prime}$, and hence commensurises $L$, which is again a contradiction as
$G(v)=Comm_{G}(L)$.

Finally, if $v^{\prime}$ is of commensuriser type, the existence of the
essential annulus $A_{1}$ in $(G(v^{\prime}),\partial G(v^{\prime}))$ implies
that $G(v^{\prime})$ commensurises $L^{\prime}$, and hence commensurises $L$,
which is again a contradiction as $G(v)=Comm_{G}(L)$.

We have shown that all cases lead to a contradiction so that $H$ and $G(f)$
must be commensurable, as required.
\end{proof}

Combining the preceding lemmas and Remark \ref{torustypeimpliestype2)}, we
have proved the following.

\begin{lemma}
\label{summary}Let $(G,\partial G)$ be an orientable $PD(n+2)$ pair such that
$G\ $is not $VPC$, let $T$ denote the universal covering $G$--tree of the
graph of groups $\Gamma_{n,n+1}^{c}(G)$, and let $e$ be an edge of $T$ such
that the associated splitting of $G$ is not canonical. Then the following
statements hold:

\begin{enumerate}
\item There is an almost invariant set $X$ over a $VPC(n+1)$ subgroup $H$ of
$G$ which is not adapted to $\partial G$ and crosses $e$.

\item There is a group $S$ with a conjugate in $\partial G$ such that $L=H\cap
S$ is $VPCn$, and $X\cap S$ and $X^{\ast}\cap S$ are both $H$--infinite.

\item There is a $V_{0}$--vertex $v$ of $T$ of commensuriser type which
encloses all $L$--almost invariant subsets of $G$. Further, $\widetilde{M}%
_{v}\cap\partial\widetilde{M}$ is non-empty, ${v}$ is of Seifert type or of
toral type 2), and {$G(v)=Comm_{G}(L)=N_{G}(L)$ contains }$H$.

\item If $f$ is the first edge on the path from $v$ to $e$, then $G(f)$ is
$VPC(n+1)$ and commensurable with $H$, and $X$ crosses the torus splitting
given by $f$.
\end{enumerate}
\end{lemma}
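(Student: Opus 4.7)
The plan is essentially assembly: Lemma \ref{summary} is explicitly flagged as a consolidation of the four lemmas immediately preceding it, together with Remark \ref{torustypeimpliestype2)}, so the proof amounts to chaining the output of each result into the input of the next, in the correct order.

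Concretely, I would start by applying Lemma \ref{ThereisXnotadaptedtodGcrossinge} to the given edge $e$ (whose associated splitting is not canonical by hypothesis). This directly produces the almost invariant set $X$ over a $VPC(n+1)$ subgroup $H$ of $G$, not adapted to $\partial G$, crossing $e$, which is conclusion (1). Feeding $X$ and $H$ into part (1) of Lemma \ref{LisVPCn} produces the boundary group $S$ and the $VPCn$ intersection $L = H \cap S$ with $X \cap S$ and $X^{*} \cap S$ both $H$-infinite, yielding (2). Part (2) of the same lemma then supplies the $V_{0}$-vertex $v$ of commensuriser type enclosing every $L$-almost invariant subset of $G$, with $G(v) = Comm_{G}(L) = N_{G}(L) \supset H$ and $\widetilde{M}_{v} \cap \partial\widetilde{M} \neq \emptyset$, and certifies that $v$ is of Seifert type or of toral type.

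The only subtlety in (3) is promoting ``torus type'' to ``torus type 2)''; this requires knowing in advance that some edge incident to $v$ is dual to a torus. I would therefore apply Lemma \ref{Xcrossestorus} to the edge $f$ starting the path in $T$ from $v$ towards $e$ \emph{before} invoking Remark \ref{torustypeimpliestype2)}. Lemma \ref{Xcrossestorus} yields that $G(f)$ is $VPC(n+1)$, so its splitting is dual to a torus, and that $X$ crosses this torus. With that in hand, Remark \ref{torustypeimpliestype2)} closes the remaining gap in (3). Finally, Lemma \ref{HandG(f)arecommensurable} provides the commensurability of $H$ and $G(f)$ to complete (4).

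The only genuine obstacle is bookkeeping, namely respecting this order: Remark \ref{torustypeimpliestype2)} must not be quoted until Lemma \ref{Xcrossestorus} has confirmed that the edge $f$ incident to $v$ is dual to a torus, since otherwise the upgrade from ``torus type'' to ``torus type 2)'' would be circular. With that ordering observed, no further argument is needed beyond citing the four preceding lemmas.
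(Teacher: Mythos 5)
Your proposal matches the paper exactly: the paper offers no separate proof, instead stating immediately before the lemma that it follows by ``combining the preceding lemmas and Remark \ref{torustypeimpliestype2)},'' which is precisely the assembly you describe. Your remark about the ordering (that Lemma \ref{Xcrossestorus} must precede the application of Remark \ref{torustypeimpliestype2)} to avoid circularity) is a correct and worthwhile clarification of a point the paper leaves implicit.
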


Now we can complete the proof of Theorem \ref{mainresult}, that the edge
splittings of $\Gamma_{n,n+1}(G)$ and of $\Gamma_{n,n+1}^{c}(G)$ are either
canonical or are special canonical tori.

\begin{proof}
Let $e$ be an edge of $T$ such that the associated splitting of $G$ is not
canonical, and apply Lemma \ref{summary}. Let $Y$ and $Y^{\ast}$ denote the
almost invariant sets associated to $f$, chosen so that $G(v)\subset Y$.

As $H$ and $G(f)$ are commensurable, and $X$ crosses $Y$, the intersections
$X\cap Y^{\ast}$ and $X^{\ast}\cap Y^{\ast}$ are nontrivial almost invariant
sets over the $VPC(n+1)$ group $H^{\prime}=H\cap G(f)$.

As $H^{\prime}$ is a torus in $(G,\partial G)$, up to equivalence, we have
only two $H^{\prime}$--almost invariant sets which are adapted to $\partial
G$, namely $Y$ and $Y^{\ast}$. Thus neither of $X\cap Y^{\ast}$ and $X^{\ast
}\cap Y^{\ast}$ is adapted to $\partial G$. Let $Z$ denote $X\cap Y^{\ast}$.
Then Lemma \ref{LisVPCn} tells us that there is a group $S^{\prime}$ in
$\partial G$ such that $Z\cap S^{\prime}$ and $Z^{\ast}\cap S^{\prime}$ are
both $H^{\prime}$--infinite, and $H^{\prime}\cap S^{\prime}$ is a $VPCn$ group
$K$. Further there is a $V_{0}$--vertex $v^{\prime}$ of $T$, which encloses
all $K$--almost invariant subsets of $G$ so that $G(v^{\prime})$ contains
$H^{\prime}$, and $\widetilde{M}_{v^{\prime}}$ intersects $\partial
\widetilde{M}$.

If $Z$ crosses some edge $e^{\prime}$ of $T$, Lemma \ref{Xcrossestorus} tells
us that if $f^{\prime}$ is the first edge on the path from $v^{\prime}$ to
$e^{\prime}$, then $G(f^{\prime})$ is $VPC(n+1)$ and commensurable with
$H^{\prime}$, and $Z$ crosses the torus splitting given by $f^{\prime}$. In
particular, $f$ and $f^{\prime}$ have commensurable stabilizers. Thus we can
apply Lemma \ref{paralleledges} to deduce that $G(f)=G(f^{\prime})$. Cases 1)
or 3) of that lemma would imply that $v=v^{\prime}$, so that $Z$ is enclosed
by $v$. But this is impossible as $Z\subset Y^{\ast}$, and $G(v)\subset Y$.
Thus we must have cases 2) or 4) of Lemma \ref{paralleledges}. Further, as $v$
and $v^{\prime}$ are not isolated, in case 2), $f\cap f^{\prime}$ must be a
$V_{1}$--vertex.

If $Z$ crosses no edge of $T$, then $Z$ is enclosed by some $V_{0}$--vertex
$v^{\prime}$, which again cannot be $v$. Thus $H^{\prime}$ is a subgroup of
$G(v)\ $and of $G(v^{\prime})$, and hence of the edge $f^{\prime}$ incident to
$v^{\prime}$ and on the path in $T$ from $v$ to $v^{\prime}$. Again we must
have cases 2) or 4) of Lemma \ref{paralleledges}, and in case 2), $f\cap
f^{\prime}$ must be a $V_{1}$--vertex.

Now Proposition \ref{characterizespecialcanonical} implies that in all cases,
the splitting determined by $f$ is a special canonical torus, and so is the
splitting determined by $f^{\prime}$ (and the splittings determined by $b$ and
$b^{\prime}$ in case 4) of Lemma \ref{paralleledges}).

Finally, we will show that the original edge $e$ that was crossed by $X$ must
equal one of $f$ or $f^{\prime}$ (or $b$ or $b^{\prime}$ in case 4) of Lemma
\ref{paralleledges}). In all cases, it follows that the splitting determined
by $e$ is a special canonical torus.

Recall that $Z=X\cap Y^{\ast}$, and that $Z\cap S^{\prime}$ and $Z^{\ast}\cap
S^{\prime}$ are both $H^{\prime}$--infinite. We claim that $X\cap S^{\prime}$
and $X^{\ast}\cap S^{\prime}$ are also both $H^{\prime}$--infinite. As $X\cap
S^{\prime}$ contains $Z\cap S^{\prime}$, the first part of the claim is clear.
As $G(f)$ determines a torus, it follows that the component of $\partial
\widetilde{M}$ with stabilizer $S^{\prime}$ cannot meet this torus, and so
must lie on the same side of $\widetilde{M}_{f}$ as does $\widetilde{M}%
_{v^{\prime}}$. In particular, $S^{\prime}\subset Y^{\ast}$, so that $Y\cap
S^{\prime}$ is $H^{\prime}$--finite. Now $Z^{\ast}=X^{\ast}\cup(X\cap Y)$, so
it follows that $X^{\ast}\cap S^{\prime}$ is also $H^{\prime}$--infinite,
completing the proof of the claim.

Next we show that $f$ is the only edge incident to $v$ which is crossed by
$X$. For suppose that $X\ $crosses an edge $f^{\prime\prime}$ incident to $v$.
Lemma \ref{summary} shows that $G(f^{\prime\prime})$ is $VPC(n+1)$ and
commensurable with $H$. Now we apply Lemma \ref{paralleledges} to the pair
$(f,f^{\prime\prime})$. Thus $G(f)=G(f^{\prime\prime})$, and we must have case
1), 2), 3) or 4) of that lemma. As $f$ and $f^{\prime\prime}$ have the same
$V_{0}$--vertex which is not isolated nor of special Seifert type, this is
impossible unless $f=f^{\prime\prime}$.

Next suppose that $X\ $crosses some edge $e^{\prime\prime}$ of $T$. Note that
$X$ is $H^{\prime}$--almost invariant, that $K=H^{\prime}\cap S^{\prime}$ is
$VPCn$, and $X\cap S$ and $X^{\ast}\cap S$ are both $H^{\prime}$--infinite.
Further $v^{\prime}$ is a $V_{0}$--vertex of $T$ of commensuriser type which
encloses all $K$--almost invariant subsets of $G$. Now we apply Lemma
\ref{Xcrossestorus} with $v^{\prime}$ and $e^{\prime\prime}$ in place of $v$
and $e$. This shows that if $f^{\prime\prime}$ is the first edge on the path
from $v^{\prime}$ to $e^{\prime\prime}$, then $G(f^{\prime\prime})$ is
$VPC(n+1)$ and commensurable with $H^{\prime}$, and $X$ crosses the torus
splitting given by $f^{\prime\prime}$. Now we can argue as in the preceding
paragraph to show that $f^{\prime}$ is the only edge incident to $v^{\prime}$
which is crossed by $X$.

As $X$ crosses the edge $e$, we conclude that $f$ is the first edge of the
path joining $v$ to $e$, and that $f^{\prime}$ is the first edge of the path
joining $v^{\prime}$ to $e$. It follows that $e$ must lie between $v$ and
$v^{\prime}$, so that the edge $e$ of $T$ must be equal to $f$ or $f^{\prime}$
(or $b$ or $b^{\prime}$), as required. It follows that each edge splitting of
$\Gamma_{n,n+1}^{c}(G)$ is either canonical or is a special canonical torus,
thus completing the proof of Theorem \ref{mainresult}.
\end{proof}

The following result is an easy consequence of the above arguments. Recall
that a $H$--almost invariant subset $X$ of a group $G$ is enclosed by a vertex
$v$ of a $G$--tree $T$, if for every edge $e$ incident to $v$, we have either
$X\leq Z_{e}$ or $X^{\ast}\leq Z_{e}$, where $Z_{e}$ and $Z_{e}^{\ast}$ are
the almost invariant subsets of $G$ associated to $e$ chosen so that $v$ lies
in $Z_{e}$. There is a natural extension of this idea as follows. If
$T^{\prime}$ is a subtree of $T$, we will say that $X$ is enclosed by
$T^{\prime}$, if for every edge $e$ incident to $T^{\prime}$, but not
contained in $T^{\prime}$, we have either $X\leq Z_{e}$ or $X^{\ast}\leq
Z_{e}$, where $Z_{e}$ and $Z_{e}^{\ast}$ are the almost invariant subsets of
$G$ associated to $e$ chosen so that $T^{\prime}$ is contained in $Z_{e}$.

\begin{proposition}
\label{anya.i.setiscontainedinoneortwovertices}Let $(G,\partial G)$ be an
orientable $PD(n+2)$ pair such that $G$ is not $VPC$. Then any almost
invariant subset of $G$ over a $VPC(n+1)$ subgroup $H$ is either enclosed by a
$V_{0}$--vertex of $T$, or is enclosed by an interval whose endpoints are the
$V_{0}$--vertices on opposite sides of a special canonical torus edge. Further
$H$ is commensurable with that splitting torus group.
\end{proposition}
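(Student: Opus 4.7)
The strategy is to case-split on whether $X$ is adapted to $\partial G$, and in the non-adapted case to transplant the analysis from the proof of Theorem~\ref{mainresult}. If $X$ is adapted, then its equivalence class lies in the enlarged family $\mathcal{G}_{n,n+1}$; by the result recalled in the proof of Lemma~\ref{ThereisXnotadaptedtodGcrossinge}, $\mathcal{G}_{n,n+1}$ has the same algebraic regular neighbourhood $\Gamma_{n,n+1}(G)$, so $X$ is enclosed by a $V_{0}$-vertex of $T$, yielding the first alternative. If $X$ is not adapted but crosses no edge of $T$, the same argument used for $Z$ in the proof of Theorem~\ref{mainresult} shows that $X$ is enclosed by a $V_{0}$-vertex, again giving the first alternative.

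The substantive case is when $X$ is not adapted and crosses some edge. Lemma~\ref{summary} then supplies a commensuriser-type $V_{0}$-vertex $v$ (of Seifert or toral type~2)) with $H \subset G(v) = N_{G}(L)$, where $L = H \cap S$ is $VPCn$ for some $S$ conjugate into $\partial G$, together with a first edge $f$ on the path from $v$ to the crossed edge, satisfying: $G(f)$ is $VPC(n+1)$ commensurable with $H$, and $X$ crosses $f$. Applying the proof of Theorem~\ref{mainresult} to $Z = X \cap Y^{\ast}$ (with $Y^{\ast}$ the a.i.\ side of $f$ opposite $v$) produces a second commensuriser-type $V_{0}$-vertex $v' \neq v$ and a first edge $f'$ at $v'$ with $G(f') = G(f)$ by Lemma~\ref{paralleledges}. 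Cases~2) or~4) of that lemma specify that the interval from $v$ to $v'$ in $T$ is either a three-vertex path through an isolated $V_{1}$-vertex, or a five-vertex path through a special-Seifert $V_{0}$-vertex. Proposition~\ref{characterizespecialcanonical} then identifies the interior edges as special canonical tori, and Lemma~\ref{HandG(f)arecommensurable} gives the commensurability of $H$ with the torus edge group.

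To finish, we must confine the edges of $T$ crossed by $X$ to this interval. The ``only edge'' arguments at the end of the proof of Theorem~\ref{mainresult} show that $f$ is the unique edge at $v$ crossed by $X$, and symmetrically $f'$ at $v'$; any edge crossed by $X$ strictly beyond $v$ or $v'$ would contradict this uniqueness via a further application of Lemma~\ref{paralleledges}. Hence every edge crossed by $X$ lies in the interval from $v$ to $v'$, so $X$ is enclosed by it, completing the second alternative.

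The main obstacle is precisely this confinement step: one must verify uniqueness at both endpoints of the candidate interval and then rule out crossings further out, which requires reusing Lemmas~\ref{summary} and~\ref{paralleledges} via the auxiliary almost invariant set $Z$. At this point the bipartite structure of $\Gamma_{n,n+1}^{c}(G)$ and the precise form of special canonical tori supplied by Proposition~\ref{characterizespecialcanonical} become indispensable.
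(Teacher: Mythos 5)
Your proof is correct and follows essentially the same strategy as the paper's: case-split on whether $X$ is adapted to $\partial G$ and whether it crosses an edge of $T$, then in the crossing case invoke Lemma \ref{summary}, Lemma \ref{paralleledges}, and the "uniqueness of the crossed edge at each endpoint" argument from the proof of Theorem \ref{mainresult}, with Proposition \ref{characterizespecialcanonical} and Lemma \ref{HandG(f)arecommensurable} identifying the interior edges as a special canonical torus and giving the commensurability of $H$ with its group. The paper's proof is simply terser, deferring to "all the results above," whereas you spell out the individual lemmas; the content is the same.
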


\begin{proof}
Let $X$ be an almost invariant set over a $VPC(n+1)$ subgroup $H$ of $G$. If
$X$ is adapted to $\partial G$, then $X$ is enclosed by some $V_{0}$--vertex.
If $X$ is not adapted to $\partial G$, there are two cases depending on
whether or not it crosses some edge of $T$. If it crosses no edge of $T$, then
$X$ is enclosed by some vertex, and hence by a $V_{0}$--vertex. If $X$ crosses
some edge of $T$, we apply all the results above. This yields two $V_{0}%
$--vertices $v$ and $v^{\prime}$ on opposite sides of a special canonical
torus edge, and $X$ crosses no edges of $T$ apart from the edges $f$ and
$f^{\prime}$ ( and $b$ and $b^{\prime}$ in case 4) of Lemma
\ref{paralleledges}) between $v$ and $v^{\prime}$. Further $H$ is
commensurable with $G(f)$. Thus $X$ is enclosed by the interval whose
endpoints are $v$ and $v^{\prime}$, the edges of this interval all have
associated the same special canonical torus, and $H$ is commensurable with
that splitting torus group, as required.
\end{proof}

\section{Comparisons\label{section:comparisons}}

As promised in the introduction, we can now give the comparisons of the JSJ
decomposition of a $PD(n+2)$ pair $(G,\partial G)$ with two other naturally
defined decompositions. The easiest to handle is the algebraic regular
neighbourhood of the set $\mathcal{E}_{n,n+1}(G)$ (the collection of all a.i.
subsets of $G$ which are over a $VPCn$ or $VPC(n+1)$ subgroup). Note that it
is not at all obvious that this family of almost invariant subsets of $G$ has
a regular neighbourhood. Such a regular neighbourhood does not exist for
general groups, as discussed in \cite{SS03}. However Proposition
\ref{anya.i.setiscontainedinoneortwovertices} implies that the decomposition
of $G$ obtained by collapsing to a point each interval whose endpoints are the
$V_{0}$--vertices on opposite sides of a special canonical torus edge is the
regular neighbourhood of $\mathcal{E}_{n,n+1}(G)$. We have shown the following result.

\begin{theorem}
Let $(G,\partial G)$ be an orientable $PD(n+2)$ pair such that $G$ is not
$VPC$, and let $\mathcal{E}_{n,n+1}(G)$ denote the collection of all a.i.
subsets of $G$ which are over a $VPCn$ or $VPC(n+1)$ subgroup. Then the
regular neighbourhood of $\mathcal{E}_{n,n+1}(G)$ in $G$ exists and is
obtained from $\Gamma_{n,n+1}(G)$ by collapsing to a point each interval whose
endpoints are the $V_{0}$--vertices on opposite sides of a special canonical
torus edge.
\end{theorem}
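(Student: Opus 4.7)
The plan is to take Proposition \ref{anya.i.setiscontainedinoneortwovertices} as the central input and to verify that the graph of groups obtained from $\Gamma_{n,n+1}(G)$ by collapsing each special canonical torus interval satisfies the defining axioms of an algebraic regular neighbourhood in the sense of \cite{SS03}. The result then follows by uniqueness of such regular neighbourhoods when they exist.

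First I would set up the candidate. By Proposition \ref{anya.i.setiscontainedinoneortwovertices}, every element $X$ of $\mathcal{E}_{n,n+1}(G)$ is either enclosed by a single $V_{0}$--vertex of $\Gamma_{n,n+1}(G)$, or is enclosed by an interval of $\Gamma_{n,n+1}(G)$ whose endpoints are $V_{0}$--vertices lying on opposite sides of a special canonical torus edge (the interval takes the form of cases (2) or (4) of Lemma \ref{paralleledges}). Define $\Gamma'$ to be the graph of groups obtained from $\Gamma_{n,n+1}(G)$ by collapsing each such interval to a single vertex, which is then labelled $V_{0}$. The resulting structure is still bipartite, because the collapsed intervals have $V_{0}$--vertices at both ends and the remaining $V_{1}$--vertices alternate as before; it is a graph of groups structure for $G$ because the intermediate edge and vertex groups being collapsed are all identified with the common torus stabiliser $G(f)$ (a consequence of the equality $J = J'$ in Lemma \ref{paralleledges}).

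Next I would verify that $\Gamma'$ has the two characterising properties of a regular neighbourhood of $\mathcal{E}_{n,n+1}(G)$: every element of the family is enclosed by a $V_{0}$--vertex, and every edge splitting of $\Gamma'$ crosses no element of the family. The enclosing property is immediate from Proposition \ref{anya.i.setiscontainedinoneortwovertices}, since each $X \in \mathcal{E}_{n,n+1}(G)$ is enclosed either by a $V_{0}$--vertex of $\Gamma_{n,n+1}(G)$ (which survives the collapse, possibly as part of a new merged vertex) or by a collapsed interval (whose image is a single $V_{0}$--vertex of $\Gamma'$). The non-crossing property follows from Theorem \ref{mainresult}: the edges of $\Gamma'$ are precisely the edges of $\Gamma_{n,n+1}(G)$ whose associated splittings are \emph{not} special canonical tori, and Theorem \ref{mainresult} says these must be canonical, i.e.\ have intersection number zero with every element of $\mathcal{E}_{n,n+1}(G)$.

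Finally I would check reducedness and minimality so that $\Gamma'$ is forced to coincide with the algebraic regular neighbourhood. Reducedness follows from reducedness of $\Gamma_{n,n+1}(G)$ together with the fact that each collapse replaces an isolated $V_{1}$--vertex flanked by two non-isolated $V_{0}$--vertices (by Proposition \ref{characterizespecialcanonical}) with a single non-isolated $V_{0}$--vertex, so no two adjacent isolated vertices are created. For the universal property, one has to check that two elements of $\mathcal{E}_{n,n+1}(G)$ enclosed by the same $V_{0}$--vertex of $\Gamma'$ behave correctly under the crossing/containment relations; this reduces to the corresponding property inside $\Gamma_{n,n+1}(G)$, using that Proposition \ref{specialcanonicaltoruscfrossesa.i.setoverH} ensures each collapsed special canonical torus does cross something in $\mathcal{E}_{n,n+1}(G)$, so that the collapsed vertex is genuinely of commensuriser type in $\Gamma'$.

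The main obstacle will be this last step: pinning down that $\Gamma'$ really is \emph{the} algebraic regular neighbourhood of $\mathcal{E}_{n,n+1}(G)$ and not just a bipartite graph of groups with the right enclosing behaviour. Concretely, one must verify the cross-connected component axiom of \cite{SS03} for the enlarged family, namely that two elements of $\mathcal{E}_{n,n+1}(G)$ enclosed by the same $V_{0}$--vertex of $\Gamma'$ lie in a common cross-connected component, while elements enclosed by different $V_{0}$--vertices do not cross. Both assertions can be handled by transporting the analogous statements for $\Gamma_{n,n+1}(G)$ across the collapse, using Theorem \ref{mainresult} to rule out crossings between special-canonical-torus edges and any remaining edges, and using Proposition \ref{anya.i.setiscontainedinoneortwovertices} to identify the enclosing vertex uniquely. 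Once this is done, uniqueness of algebraic regular neighbourhoods gives the claimed identification.
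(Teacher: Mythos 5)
Your proposal takes the same route as the paper, which derives this theorem directly from Proposition \ref{anya.i.setiscontainedinoneortwovertices} in a single sentence and leaves the verification of the regular-neighbourhood axioms implicit; your write-up simply makes that verification explicit, using Theorem \ref{mainresult} for the non-crossing of the surviving edges and Proposition \ref{specialcanonicaltoruscfrossesa.i.setoverH} to see that the collapsed intervals do enclose crossing pairs. One small imprecision worth noting: in case 4) of Lemma \ref{paralleledges} the interior of the collapsed interval contains a $V_{0}$--vertex whose stabiliser contains $G(f)$ with index $2$, so the intermediate vertex groups are not all equal to $G(f)$ as you claim, but this is harmless since collapsing any connected subgraph of a graph of groups automatically yields a graph of groups structure for $G$.
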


\begin{remark}
If $\partial G$ is empty, the decomposition $\Gamma_{n+1}(G)$ is equal to the
regular neighbourhood of $\mathcal{E}_{n,n+1}(G)$ in $G$, as $\mathcal{E}%
_{n,n+1}(G)$ is equal to the collection of all a.i. subsets of $G$ which are
over a $VPC(n+1)$ subgroup.
\end{remark}

Next we turn to the analogue of the topological decomposition of
$3$--manifolds obtained in \cite{NS}. One considers the family $\mathcal{S}%
_{n,n+1}(G)$, which consists of all a.i. subsets of $G$ which are dual to
splittings of $G$ over annuli or tori in $(G,\partial G)$. We claim that this
family also has a regular neighbourhood, and that the edge splittings are
those in $\mathcal{S}_{n,n+1}(G)$ which cross no element of $\mathcal{S}%
_{n,n+1}(G)$. As in \cite{NS}, we call this the Waldhausen decomposition or
W--decomposition. Again it is not at all obvious that this family of almost
invariant subsets of $G$ has a regular neighbourhood. Such a regular
neighbourhood does not exist for general groups, as discussed in \cite{SS03}.

We will say that an element of the family $\mathcal{S}_{n,n+1}(G)$ which
crosses no element of $\mathcal{S}_{n,n+1}(G)$ is \textit{isolated} in
$\mathcal{S}_{n,n+1}(G)$. We start by describing the isolated elements of
$\mathcal{S}_{n,n+1}(G)$, and showing that there are only finitely many such
elements. Trivially the edge splittings of $\Gamma_{n,n+1}(G)$ are all
isolated elements of $\mathcal{S}_{n,n+1}(G)$. We have the following result.

\begin{lemma}
\label{alphaisenclosedbycommensurisertypevertex}Let $(G,\partial G)$ be an
orientable $PD(n+2)$ pair such that $G$ is not $VPC$, and let $\alpha$ be an
isolated element of $\mathcal{S}_{n,n+1}(G)$, which is not an edge splitting
of $\Gamma_{n,n+1}(G)$. Then $\alpha$ is enclosed by a $V_{0}$--vertex of
$\Gamma_{n,n+1}(G)$ of commensuriser type.
\end{lemma}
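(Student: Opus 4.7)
The plan is to combine the regular neighbourhood property of $\Gamma_{n,n+1}(G)$ with the classification of $V_{0}$--vertex types in Theorem~\ref{thm21}. First, since $\alpha\in\mathcal{S}_{n,n+1}(G)$ is dual to an essential annulus or torus in $(G,\partial G)$, the associated almost invariant subset of $G$ is adapted to $\partial G$. Hence $\alpha$ lies either in $\mathcal{F}_{n,n+1}$ (if its stabilizer is $VPCn$) or in the enlarged family $\mathcal{G}_{n,n+1}$ (if its stabilizer is $VPC(n+1)$). Since $\Gamma_{n,n+1}(G)$ is the regular neighbourhood of both $\mathcal{F}_{n,n+1}$ and $\mathcal{G}_{n,n+1}$, in either case $\alpha$ is enclosed by $\Gamma_{n,n+1}(G)$.

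Next I would argue that $\alpha$ is enclosed by a single $V_{0}$--vertex $v$ of $\Gamma_{n,n+1}(G)$. By Theorem~\ref{thm21}(3), each edge splitting of $\Gamma_{n,n+1}$ is dual to an essential annulus or torus, and so belongs to $\mathcal{S}_{n,n+1}(G)$. Since $\alpha$ is isolated in $\mathcal{S}_{n,n+1}(G)$, it crosses no edge of $\Gamma_{n,n+1}(G)$; combined with the preceding enclosure statement this forces $\alpha$ to be enclosed by a single $V_{0}$--vertex $v$. For a $VPC(n+1)$ stabilizer one could equivalently invoke Proposition~\ref{anya.i.setiscontainedinoneortwovertices}, the ``interval'' alternative there being excluded because it would require $\alpha$ to cross a special canonical torus edge.

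It then remains to rule out the three non-commensuriser possibilities for the type of $v$ in Theorem~\ref{thm21}(1). If $v$ is isolated (case~1a), both incident edges have stabilizer equal to $G(v)$, and a short analysis of the $G$--tree shows that any nontrivial a.i.\ set enclosed by $v$ is equivalent, up to complementation, to one of the two incident edge splittings; this contradicts the hypothesis that $\alpha$ is not an edge splitting of $\Gamma_{n,n+1}(G)$. If $v$ is of $VPC(n-1)$--by--Fuchsian $I$--bundle type (case~1b) or of $VPCn$--by--Fuchsian interior Seifert type (case~1c), then the a.i.\ sets over annuli and tori enclosed by $v$ correspond, via the fibre structure, to essential simple closed curves on the base $2$--orbifold, whose orbifold fundamental group is by hypothesis not virtually cyclic. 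A peripheral such curve reproduces an edge splitting incident to $v$ and is excluded; a non-peripheral one is crossed by some other essential simple closed curve on the base orbifold, which lifts to a splitting in $\mathcal{S}_{n,n+1}(G)$ crossing $\alpha$, again contradicting isolatedness. Hence $v$ is of commensuriser type.

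The main obstacle is the Fuchsian cases (1b) and (1c): one must translate the enclosure of $\alpha$ at a Fuchsian-type $V_{0}$--vertex into the combinatorial language of essential simple closed curves on the base orbifold. For this step I would appeal to the detailed description of Fuchsian and $I$--bundle vertices given in \cite{SS05}, together with the standard fact that on a compact $2$--orbifold whose fundamental group is not virtually cyclic every non-peripheral essential simple closed curve is crossed by some other essential simple closed curve. Once this is in place, cases 1a--1c are eliminated and only the commensuriser type survives.
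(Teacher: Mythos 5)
Your proposal is correct and follows essentially the same approach as the paper: observe that $\alpha$, being dual to an annulus or torus, is adapted to $\partial G$ and hence enclosed by some $V_0$--vertex $v$; then eliminate the isolated, $I$--bundle, and interior Seifert cases of Theorem~\ref{thm21}(1) by noting that in the Fuchsian cases $\alpha$ corresponds to a non-peripheral essential simple closed curve in the base $2$--orbifold, which (since the base orbifold group is not virtually cyclic) must be crossed by another such curve, contradicting isolatedness. The extra intermediate step you include about $\alpha$ crossing no edge of $\Gamma_{n,n+1}(G)$ is harmless but unnecessary, since membership in $\mathcal{F}_{n,n+1}$ or $\mathcal{G}_{n,n+1}$ already yields enclosure by a $V_0$--vertex directly from the regular neighbourhood property, as the paper notes.
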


\begin{proof}
As $\alpha$ is a splitting of $G$ dual to an annulus or torus, it must be
enclosed by some $V_{0}$--vertex $v$ of $\Gamma_{n,n+1}(G)$. Theorem
\ref{thm21} tells us that a $V_{0}$--vertex of $\Gamma_{n,n+1}(G)$ must be
isolated, of $I$--bundle type, of interior Seifert type, or of commensuriser
type. Thus it suffices to show that the first three cases cannot occur.

If $v$ is isolated, any splitting enclosed by $v$ is equal to an edge
splitting of $\Gamma_{n,n+1}(G)$. Hence this case cannot occur.

If $v$ is of $I$--bundle type, and so of $VPC(n-1)$--by--Fuchsian type, then
$\alpha$ must be a splitting dual to an annulus. Let $K$ denote the $VPCn$
group carried by the splitting annulus, and let $X_{v}$ denote the base
$2$--orbifold of $v$. Then the image of $K$ in the fundamental group of
$X_{v}$ is $VPC(\geq1)$. As a Fuchsian group cannot have a $VPC2$ subgroup, it
follows that the image of $K$ must be $VPC1$. As $\alpha$ is not an edge
splitting of $\Gamma_{n,n+1}(G)$, and crosses no such splitting, it determines
a splitting of $G(v)$ which is adapted to $\partial_{1}v$. This then yields a
splitting over a $VPC1$ subgroup of the fundamental group of $X_{v}$, which is
adapted to the boundary $\partial X_{v}$. As discussed in section 5.1.2 of
\cite{GL}, this splitting is dual to a "simple closed curve" $\gamma$ in
$X_{v}$, meaning that $\gamma$ is a connected, closed $1$--dimensional
suborbifold of $X_{v}$. Thus either $\gamma$ is a circle or it is the quotient
of a circle by a reflection involution. The assumption that $\alpha$ is not an
edge splitting of $\Gamma_{n,n+1}(G)$, means that $\gamma$ cannot be homotopic
to a boundary curve of $X_{v}$. Hence there is another "simple closed curve"
$\delta$ in $X_{v}$ whose intersection number with $\gamma$ is non-zero
(Corollary 5.10 of \cite{GL}). This determines a splitting of $G$ dual to an
annulus whose intersection number with $\alpha$ is non-zero, contradicting our
assumption that $\alpha$ crosses no element of $\mathcal{S}_{n,n+1}(G)$. We
conclude that this case cannot occur.

If $v$ is of interior Seifert type, and so of $VPCn$--by--Fuchsian type, then
$\alpha$ must be a splitting dual to a torus. As in the preceding paragraph,
this torus yields an essential "simple closed curve" in the base $2$--orbifold
of $v$. As in that paragraph, this implies that there is a splitting of $G$
dual to a torus whose intersection number with $\alpha$ is non-zero,
contradicting our assumption that $\alpha$ crosses no element of
$\mathcal{S}_{n,n+1}(G)$. We conclude that this case also cannot occur, which
completes the proof of the lemma.
\end{proof}

Theorem \ref{thm21} tells us that if $v$ is a $V_{0}$--vertex of
$\Gamma_{n,n+1}(G)$ of commensuriser type, then $v$ is of peripheral Seifert
type, or of torus type, or of solid torus type. In the penultimate section of
\cite{SS05}, the authors discussed the structure of such vertices in great
detail. They showed that in each of these cases, $G(v)$ is $VPCn$%
--by--$\Gamma$, where $\Gamma$ is the fundamental group of a compact
$2$--dimensional orbifold $X_{v}$. The group $\Gamma$ is finite if $v$ is of
solid torus type, is virtually infinite cyclic if $v$ is of torus type, and is
not virtually cyclic if $v$ is of Seifert type. Recall that any vertex $v$ of
$\Gamma_{n,n+1}$\ has two types of "boundary" subgroups. The first type comes
from the edge groups of the decomposition and the family of all these
subgroups will be denoted by $\partial_{1}v$. The second type comes from the
decomposition of $\partial G$ by edges of the decompositions and this family
will be denoted by $\partial_{0}v$. The first type gives us $PD(n+1)$\ pairs
in $(G,\partial G)$, namely annuli or tori, and the second type gives us
$PD(n+1)$\ pairs which are contained in $\partial G$. If $v$ is of
commensuriser type, these families of subgroups determine a division of the
boundary $\partial X_{v}$ of $X_{v}$ into suborbifolds $\partial_{0}X_{v}$ and
$\partial_{1}X_{v}$, where $\partial_{0}X_{v}$ equals the closure of $\partial
X_{v}-\partial_{1}X_{v}$. Note that $\partial_{0}X_{v}$ must be non-empty. It
is possible that $\partial_{1}X_{v}$ may be empty, but this happens if and
only if $\Gamma_{n,n+1}(G)$ consists of the single vertex $v$, so that
$G=G(v)$. Next the authors of \cite{SS05} show that one can double $G(v)$
along $\partial_{0}v$ which is "the intersection of $G(v)$ with $\partial G$".
The new object $DG(v)$ is the fundamental group of a $V_{0}$--vertex of
$\Gamma_{n+1}(DG)$. It is $VPCn$--by--$D\Gamma$, where $D\Gamma$ is the
fundamental group of $DX_{v}$, the double of $X_{v}$ along $\partial_{0}X_{v}%
$. As we are assuming that $G\ $is not $VPC$, it follows that $DG$ is also not
$VPC$, so that $\pi_{1}^{orb}(DX_{v})$ cannot contain a $VPC2$ subgroup.

In the proof of Lemma \ref{alphaisenclosedbycommensurisertypevertex}, we used
the close connection between splittings of $G\ $over annuli or tori enclosed
by a $V_{0}$--vertex $v$ and "simple closed curves" in the base $2$--orbifold
$X_{v}$. Now we will extend these ideas. We will need the idea of a "simple
arc" $\lambda$ in the pair $(X_{v},\partial_{0}X_{v})$. This means that
$\lambda$ is a connected $1$--dimensional suborbifold of $X_{v}$ with
non-empty boundary, so that either $\lambda$ is an interval or it is the
quotient of an interval by a reflection involution. Further $\lambda$ has
boundary contained in $\partial_{0}X_{v}$.

We will say that a "simple closed curve" $\gamma$ in the $2$--orbifold $X_{v}$
is \textit{essential in} $(X_{v},\partial_{0}X_{v})$ if $\pi_{1}^{orb}%
(\gamma)$ injects into $\pi_{1}^{orb}(X_{v})$ and $\gamma$ is not homotopic
into $\partial_{0}X_{v}$ or into $\partial_{1}X_{v}$. Also we will say that a
"simple arc" $\lambda$ in $(X_{v},\partial_{0}X_{v})$ is \textit{essential in}
$(X_{v},\partial_{0}X_{v})$ if $\lambda$ cannot be homotoped into
$\partial_{0}X_{v}$ nor into $\partial_{1}X_{v}$ while keeping $\partial
\lambda$ in $\partial_{0}X_{v}$.

\begin{lemma}
\label{bijectionbetweencurvesandtori}Let $(G,\partial G)$ be an orientable
$PD(n+2)$ pair such that $G$ is not $VPC$, and let $\alpha$ be an isolated
element of $\mathcal{S}_{n,n+1}(G)$, which is not an edge splitting of
$\Gamma_{n,n+1}(G)$, and is enclosed by a $V_{0}$--vertex $v$ of
$\Gamma_{n,n+1}(G)$ of commensuriser type, and with base $2$--orbifold $X_{v}%
$. Then the following hold:

\begin{enumerate}
\item If $\alpha$ is a splitting of $G$ dual to a torus, it determines a
"simple closed curve" $C$ in $X_{v}$ which is essential in $(X_{v}%
,\partial_{0}X_{v})$, and this yields a bijection between splittings of
$G\ $dual to a torus which are enclosed by $v$ and not an edge torus of $v$,
and "simple closed curves" in $X_{v}$ which are essential in $(X_{v}%
,\partial_{0}X_{v})$.

\item If $\alpha$ is a splitting of $G$ dual to an annulus, it determines a
"simple arc" $\lambda$ in $(X_{v},\partial_{0}X_{v})$ which is essential in
$(X_{v},\partial_{0}X_{v})$, and this yields a bijection between splittings of
$G\ $dual to an annulus which are enclosed by $v$ and not an edge annulus of
$v$, and "simple arcs" in $(X_{v},\partial_{0}X_{v})$ which are essential in
$(X_{v},\partial_{0}X_{v})$.
\end{enumerate}
\end{lemma}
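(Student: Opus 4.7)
The plan is to reduce the problem to orbifold theory on the base $2$--orbifold $X_v$. Using the structure of a commensuriser--type vertex recalled from \cite{SS05} immediately before the statement, there is a short exact sequence
\[
1 \to F \to G(v) \to \Gamma \to 1,
\]
where $F$ is $VPCn$ and $\Gamma = \pi_1^{orb}(X_v)$, and the division of $\partial X_v$ as $\partial_0 X_v \cup \partial_1 X_v$ corresponds to the split $\partial_0 v \cup \partial_1 v$. Because $G$ is not $VPC$, neither is $DG$, and hence $D\Gamma$ has no $VPC2$ subgroup; in particular $\Gamma$ has no $VPC2$ subgroup.

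I would first translate the hypotheses on $\alpha$ into an orbifold splitting. Enclosure of $\alpha$ by $v$ gives a nontrivial splitting of $G(v)$ adapted to $\partial_1 v$, and since $\alpha \in \mathcal{S}_{n,n+1}(G)$ is dual to an annulus or torus in $(G,\partial G)$, this splitting is also adapted to $\partial_0 v$. In the torus case the edge group is a $VPC(n+1)$ subgroup $H \subset G(v)$; its image in $\Gamma$ cannot be finite (otherwise $H$ would, up to finite index, sit inside $F$ and hence be $VPC(\leq n)$), nor $VPC2$ (by the observation above), so it must be $VPC1$. In the annulus case the $VPCn$ edge group maps to a finite subgroup of $\Gamma$, and the two boundary subgroups of the annulus lie in $\partial_0 v$, forcing the endpoints of the resulting arc to land on $\partial_0 X_v$. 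Now, exactly as in the proof of Lemma \ref{alphaisenclosedbycommensurisertypevertex}, the orbifold splitting theory from section 5.1.2 of \cite{GL} identifies the splittings of $\Gamma$ obtained in this way with a \emph{simple closed curve} $C$ (torus case) or a \emph{simple arc} $\lambda$ in $(X_v, \partial_0 X_v)$ (annulus case). Essentiality in $(X_v, \partial_0 X_v)$ then follows by a short contradiction argument: a homotopy of $C$ or $\lambda$ into $\partial_1 X_v$ would realise $\alpha$ as an edge splitting of $\Gamma_{n,n+1}(G)$ at $v$, contradicting the hypothesis, while a homotopy into $\partial_0 X_v$ would force the dual torus or annulus to be peripheral in $(G,\partial G)$, so that $\alpha$ would not be a nontrivial splitting of $G$.

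To obtain the bijections I would reverse the process. An essential simple closed curve $C$ in $(X_v, \partial_0 X_v)$ determines a splitting of $\Gamma$ over a $VPC1$ subgroup adapted to $\partial X_v$, and I would pull this back through the extension above to a splitting of $G(v)$ over a $VPC(n+1)$ subgroup adapted to both $\partial_1 v$ and $\partial_0 v$; this then refines the vertex $v$ of $\Gamma_{n,n+1}(G)$ in the standard graph-of-groups way to yield a splitting of $G$ enclosed by $v$ and dual to a torus, which is not an edge torus of $v$ by essentiality. The analogous construction converts an essential simple arc into an annulus splitting, and the two pairs of constructions are mutually inverse up to equivalence of splittings. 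The step I expect to be the main obstacle is keeping track of boundary types in the arc case: one must ensure that the endpoints of the lifted $VPCn$ splitting really land in $\partial_0 v$ rather than $\partial_1 v$, and this requires combining the finite-image property of the edge group in $\Gamma$ with the fact, built into the description of commensuriser type in \cite{SS05}, that the $\partial_1$--boundary tori of $v$ have image in $\Gamma$ that is not finite.
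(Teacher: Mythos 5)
Your proof has a genuine gap at its very first step, and it is precisely the difficulty that the paper flags at the start of its own proof. You assert that ``enclosure of $\alpha$ by $v$ gives a nontrivial splitting of $G(v)$ adapted to $\partial_1 v$,'' and then run the entire argument through a splitting of $G(v)$ and hence of $\Gamma = \pi_1^{orb}(X_v)$. But this assertion is false in general. A splitting of $G$ dual to a torus that is enclosed by $v$ and not an edge splitting need not induce a nontrivial splitting of $G(v)$. The relevant curve $C$ in $X_v$ may be parallel to a boundary component $B$ of $X_v$ whose division into $\partial_0$ and $\partial_1$ is mixed; such a $C$ is essential in $(X_v,\partial_0 X_v)$ (and so belongs in the bijection) but bounds an annulus with $B$, and the corresponding torus splitting of $G$ merely pushes $G(v)$ onto one side of an amalgam rather than cutting $G(v)$. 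The paper explicitly says: ``although $\alpha$ is not an edge splitting of $\Gamma_{n,n+1}(G)$, it need not be the case that it determines a splitting of $G(v)$. This is clear if $v$ is of torus type, but the same difficulty can arise if $v$ is of Seifert type.'' Your argument does not confront this.

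The paper's remedy is the doubling construction, which you omit. One passes from $(G(v),\partial_0 v,\partial_1 v)$ to $DG(v)$, the double along $\partial_0 v$, which is recalled earlier in the section to be the vertex group of a $V_0$--vertex $V$ of $\Gamma_{n+1}(DG)$ with base orbifold $DX_v$ (the double of $X_v$ along $\partial_0 X_v$). The torus splitting $\alpha$ then genuinely splits $DG$ and, not being an edge splitting of $\Gamma_{n+1}(DG)$, produces a simple closed curve in $DX_v$ not parallel to $\partial DX_v$; since $\alpha$ is enclosed by $v$ this curve can be taken inside $X_v$, and because $DX_v$ is the double along $\partial_0 X_v$, essentiality in $(X_v,\partial_0 X_v)$ falls out cleanly. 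In part 2) the paper additionally doubles the annulus $A$ to a torus $DA$ in $DG$ and uses invariance under the involution of $DX_v$ to produce a simple arc $\lambda$ with endpoints in $\partial_0 X_v$; your sketch instead argues about the image of the $VPCn$ edge group in $\Gamma$, but this again presupposes a splitting of $G(v)$ and does not establish that the arc's endpoints actually meet $\partial_0 X_v$. Your observations that the relevant image in $\Gamma$ is $VPC1$ or finite, and that $\Gamma$ has no $VPC2$ subgroup, are correct and useful in spirit, but they do not substitute for the doubling step; without it both the existence of the curve or arc and the essentiality claim lack justification in the cases the paper singles out.
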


\begin{proof}
1) Note that although $\alpha$ is not an edge splitting of $\Gamma_{n,n+1}%
(G)$, it need not be the case that it determines a splitting of $G(v)$. This
is clear if $v$ is of torus type, but the same difficulty can arise if $v$ is
of Seifert type. We resolve this problem by working with the double $DG(v)$
which is the fundamental group of a $V_{0}$--vertex $V$ of $\Gamma_{n+1}(DG)$.
Now $\alpha$ gives a splitting of $DG$ dual to a torus which is enclosed by
$V$, and is not an edge splitting of $\Gamma_{n+1}(DG)$. As in the proof of
Lemma \ref{alphaisenclosedbycommensurisertypevertex}, this splitting is dual
to a "simple closed curve" $\gamma$ in $DX_{v}$ which cannot be homotopic to a
boundary curve of $DX_{v}$. As $\alpha$ is enclosed by $v$, it follows that
$\gamma$ can be chosen to lie in $X_{v}$. Further $\gamma$ must be essential
in $(X_{v},\partial_{0}X_{v})$, as $DX_{v}$ is the double of $X_{v}$ along
$\partial_{0}X_{v}$. Now reversing the arguments yields the required bijection.

2) Let $A$ denote the annulus in $(G,\partial G)$ which induces the splitting
$\alpha$ of $G$, let $DA$ denote the torus in $DG$ obtained by doubling $A$
along $\partial A$, and let $D\alpha$ denote the splitting of $DG$ induced by
$DA$. As $\alpha$ is not an edge splitting of $\Gamma_{n,n+1}(G)$, it follows
that $D\alpha$ is not an edge splitting of $\Gamma_{n+1}(DG)$. Now as in part
1), $Da$ determines a splitting of $DG(v)$ dual to the torus $DA$ which yields
a "simple closed curve" $\gamma$ in $DX_{v}$ which cannot be homotopic to a
boundary curve of $DX_{v}$. This splitting of $DG(v)$ is invariant under the
involution interchanging the two copies of $G(v)$, so the curve $\gamma$ can
be chosen to be invariant under the involution of $DX_{v}$ interchanging the
two copies of $X_{v}$. Thus $\gamma$ is the double of a "simple arc" $\lambda$
in the base orbifold $X_{v}$. Further $\lambda$ has boundary contained in
$\partial_{0}X_{v}$, reflecting the fact that the boundary of the annulus lies
in $\partial G$. As $\gamma$ is not homotopic to a boundary curve of $DX_{v}$,
it follows that $\lambda$ is essential in $(X_{v},\partial_{0}X_{v})$. One can
easily reverse this argument to obtain the required bijection.
\end{proof}

In the preceding lemma, as $\alpha$ crosses no element of $\mathcal{S}%
_{n,n+1}(G)$, it follows that $\gamma\ $and $\lambda$ cross no simple closed
curve in $X_{v}$ which is essential in $(X_{v},\partial_{0}X_{v})$, and cross
no essential simple arc in $(X_{v},\partial_{0}X_{v})$. We will say that such
$\gamma$ and $\lambda$ are \textit{isolated}.

Note that in the above discussions, any compact $2$--orbifold with non-empty
boundary can occur as $X_{v}$, which is not the case in the setting of
$3$--manifolds. See Lemma \ref{nomirrors} and the discussion at the end of
section \ref{section:dimension3}. Note also that in general it is possible
that an arc or closed curve in $X_{v}$ can be essential in $(X_{v}%
,\partial_{0}X_{v})$, but inessential in $(X_{v},\partial X_{v})$, meaning
that it can be homotoped into $\partial X_{v}$ while keeping $\partial\lambda$
in $\partial X_{v}$.

\begin{lemma}
\label{essentiality}Let $X$ be a compact $2$--orbifold, and let $\partial
_{1}X$ denote a possibly empty compact suborbifold of $\partial X$. Let
$\partial_{0}X$ denote the closure of $\partial X-\partial_{1}X$.

\begin{enumerate}
\item If the Euler characteristic $\chi(X)\leq0$, and $\lambda$ is an isolated
essential "simple arc" in $(X,\partial_{0}X)$, then $\lambda$ is essential in
$(X,\partial X)$.

\item If $\chi(X)\leq0$, and $C$ is an isolated "simple closed curve" in $X$
which is essential in $(X,\partial_{0}X)$, then $C$ is essential in
$(X,\partial X)$.

\item If $\chi(X)>0$, there cannot be any "simple closed curve" in $X$ which
is essential in $(X,\partial_{0}X)$. Also there cannot be any isolated "simple
arc" in $X$ which is essential in $(X,\partial_{0}X)$.
\end{enumerate}
\end{lemma}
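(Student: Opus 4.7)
The plan is to reduce all three parts to questions about a single, easier-to-analyze orbifold obtained by doubling. Let $DX$ denote the double of $X$ along $\partial_{0}X$, so that $\partial DX$ is the double of $\partial_{1}X$ and the mirror involution $\tau$ of $DX$ fixes $\partial_{0}X$ pointwise. A simple arc $\lambda$ in $(X,\partial_{0}X)$ doubles to a simple closed curve $D\lambda\subset DX$, while an interior simple closed curve $C\subset X$ doubles to a pair of parallel copies $DC\subset DX$. The key dictionary I would first establish is: $\lambda$ (resp.\ $C$) is essential in $(X,\partial_{0}X)$ if and only if its double is $\pi_{1}^{orb}$--injective, non-null-homotopic, and non-boundary-parallel in $DX$. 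Homotoping $\lambda$ into $\partial_{0}X$ corresponds to $D\lambda$ becoming null-homotopic; homotoping $\lambda$ into $\partial_{1}X$ corresponds to $D\lambda$ being homotopic into $\partial DX$. Moreover, any essential simple closed curve or arc in $DX$ that crosses $D\lambda$ or $DC$ may be replaced by a $\tau$--invariant minimal-intersection representative whose quotient is an essential simple arc or simple closed curve in $(X,\partial_{0}X)$ crossing $\lambda$ or $C$, so the isolation hypothesis descends as well. Since $\chi(\partial_{0}X)\geq 0$ for a 1--dimensional orbifold, in parts (1) and (2) we obtain $\chi(DX)=2\chi(X)-\chi(\partial_{0}X)\leq 0$.

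For parts (1) and (2), assume toward contradiction that $\lambda$ (resp.\ $C$) is essential in $(X,\partial_{0}X)$ but is homotopic into $\partial X$. Essentiality in $(X,\partial_{0}X)$ forbids the homotoped image from lying entirely in $\partial_{0}X$ or entirely in $\partial_{1}X$, so the image must lie in a single boundary component $\beta$ of $X$ which carries both $\partial_{0}X$--arcs and $\partial_{1}X$--arcs. Doubling, the curve $D\lambda$ (resp.\ each component of $DC$) is essential in $(DX,\partial DX)$ yet is homotopic into the 1--complex $D\beta\subset\partial_{0}X\cup\partial DX$ consisting of arcs of $\partial DX$ joined through internal $\partial_{0}X$--arcs of the mirror locus. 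In a compact 2--orbifold of non-positive Euler characteristic, such a curve is always crossed by some essential simple closed curve or arc — produced by traversing the complement of a collar of $D\beta$, whose non-triviality is guaranteed by $\chi(DX)\leq 0$ — and this crossing curve can be arranged to be $\tau$--invariant and transverse to $D\lambda$. Descending to $X$ via the dictionary above furnishes an essential simple arc or closed curve in $(X,\partial_{0}X)$ crossing $\lambda$ (resp.\ $C$), contradicting the isolation hypothesis.

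For part (3), when $\chi(X)>0$ and $\partial_{0}X$ is non-empty, the classification of compact 2--orbifolds forces $X$ to be a disk-like orbifold: topologically a disk, possibly decorated by finitely many interior cone points and/or reflector arcs on the boundary. Any embedded simple closed curve in such $X$ separates off a sub-disk-orbifold containing at most one cone point; hence its orbifold fundamental group is either trivial or finite cyclic, violating one of the conditions in the definition of essentiality in $(X,\partial_{0}X)$. For the arc statement, note that essential simple arcs in a disk-like orbifold always admit crossing essential arcs obtained by connecting distinct $\partial_{0}X$--arcs on opposite sides of $\lambda$ across the disk, so no essential arc is isolated.

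The main obstacle, I expect, will be the crossing-curve construction in parts (1) and (2): while $\chi(DX)\leq 0$ guarantees in principle that the boundary-parallel-within-$D\beta$ curve $D\lambda$ is crossed by essential curves or arcs, one must simultaneously ensure that a crossing representative is simple, transverse, and $\tau$--invariant — and, moreover, that upon descending to $X$ it yields an arc or curve of the kind counted by the isolation hypothesis rather than an object which degenerates into $\partial_{0}X$ itself. A secondary subtlety lies in the precise meaning of \emph{essential in $(X,\partial X)$} for arcs, but the argument above applies to either of the reasonable conventions.
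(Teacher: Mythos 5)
Your approach via the double $DX$ is genuinely different from the paper's. The paper's proof is a direct case analysis: in part (1) it separates the orientable and non-orientable arc cases and argues on the boundary component of $\partial X$ that $\lambda$ is allegedly parallel into; in part (2) it splits into $\chi(X)<0$ (where a crossing arc is easy to find) and $\chi(X)=0$ (where the orbifold is covered by an annulus and a short case check with explicit figures is needed); and in part (3) it observes that $\pi_1^{orb}(X)$ is finite while $\pi_1^{orb}(\gamma)$ is infinite, and for the arc statement it carefully computes, for each of the four disk-type orbifolds, the threshold $k$ of $|\partial_0 X|$ below which no essential arc exists and above which no such arc is isolated.

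As written, though, your argument has real gaps, some of which you anticipate. The central one is the assertion that in a compact $2$--orbifold of non-positive Euler characteristic, a curve homotopic into the $1$--complex $D\beta$ is ``always crossed by some essential simple closed curve or arc.'' That is not a consequence of $\chi(DX)\le 0$ alone; the paper's own proof of part (2) has to treat $\chi(X)=0$ separately precisely because there are orbifolds with $\chi=0$ (annulus, $D^2(2,2)$, M\"obius band, etc.) where essential simple closed curves are scarce and the crossing object, if it exists, is an arc whose existence depends on the combinatorics of $\partial_0 X$ and $\partial_1 X$, not merely on $\chi$. Second, even granting existence of a crossing curve in $DX$, the claim that it ``can be arranged to be $\tau$--invariant and transverse to $D\lambda$'' is an equivariant minimal-position statement for orbifolds that you would have to prove; it is not a formality, and the reflector set $\partial_0 X$ can obstruct naive averaging. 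Third, in part (3) your closed-curve argument treats an embedded circle, but a ``simple closed curve'' in the sense of this paper may be the quotient of a circle by a reflection (a $1$--orbifold with underlying space an arc), which does not bound a sub-disk in the way you describe; the paper sidesteps this entirely by noting $\pi_1^{orb}(X)$ is finite while $\pi_1^{orb}(\gamma)$ is infinite. Finally, your claim that an essential arc in a disk-like orbifold ``always admits crossing essential arcs obtained by connecting distinct $\partial_0 X$--arcs on opposite sides'' presumes there are such $\partial_0 X$--arcs available on each side, which is exactly the counting issue the paper resolves with the explicit thresholds $k$. So the outline could perhaps be made to work, but the steps you flag as obstacles are the substance of the proof, and the specific case analyses the paper performs do not disappear under doubling.
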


\begin{remark}
Recall that here a "simple closed curve" is either a circle or the orbifold
quotient of the circle by a reflection, and a similar comment applies to the
phrase "simple arc". In each case, the quotient by a reflection is a
non-orientable $1$--orbifold
\end{remark}

\begin{proof}
1) Suppose that $\lambda$ is orientable, so that $\partial\lambda$ consists of
two points, and is not essential in $(X,\partial X)$. Thus $\lambda$ is
parallel to an arc $\mu$ contained in some component $C$ of $\partial X$. Of
course, the ends of $\mu$ lie in $\partial_{0}X$. As $\lambda$ is essential in
$(X,\partial_{0}X)$, the arc $\mu$ cannot be contained in $\partial_{0}X$.
There must be at least two components of $\partial_{1}X$ in the interior of
$\mu$, so there must be at least one component $D$ of $\partial_{0}X$ in the
interior of $\mu$. There can be no mirrors in $X$, as otherwise we could join
$D$ to a mirror to obtain an essential "simple arc" in $(X,\partial_{0}X)$
which crosses $\lambda$. As $X$ has no mirrors, it follows that $C$ is a
circle. But now there is an arc $\lambda^{\prime}$ with both ends in $D$ which
is parallel to an arc $\mu^{\prime}$ in $C$, such that $\mu\cup\mu^{\prime}%
=C$, and $\lambda^{\prime}$ is also essential in $(X,\partial_{0}X)$ and
crosses $\lambda$. This contradicts the hypothesis that $\lambda$ is isolated,
which proves the required result in the case when $\lambda$ is orientable.

Next suppose that $\lambda$ is not orientable, so that $\partial\lambda$
consists of one point, and is not essential in $(X,\partial X)$. Thus
$\lambda$ is homotopic to an isomorphic $1$--orbifold $\mu$ contained in some
component $C$ of $\partial X$. Note that the reflector point of $\mu$ must be
a reflector point of $C$, which must be an intersection point of $C$ with a
mirror component $m$ of $X$. Of course, $\partial\mu$ lies in $\partial_{0}X$.
As $\lambda$ is essential in $(X,\partial_{0}X)$, the orbifold $\mu$ cannot be
contained in $\partial_{0}X$, nor in $\partial_{1}X$. It follows that there
must be a component $D$ of $\partial_{0}X$ in $\mu$ other than the component
which contains $\partial\mu$. Note that $D$ may contain the reflector point of
$\mu$. If $X\ $has a mirror other than $m$, we could join $D$ to such a mirror
to obtain an essential "simple arc" in $(X,\partial_{0}X)$ which crosses
$\lambda$. It follows that $m$ is the only mirror in $X$. In particular,
$C\ $and $m$ must together form a boundary component of the surface underlying
$X$. But now there is an arc $\lambda^{\prime}$ with both ends in $D$ which is
parallel to an arc $\mu^{\prime}$ of $C\cup m$, such that $\mu\cup\mu^{\prime
}=C\cup m$, and $\lambda^{\prime}$ is also essential in $(X,\partial_{0}X)$
and crosses $\lambda$. This again contradicts the hypothesis that $\lambda$ is
isolated, which proves the required result in the case when $\lambda$ is not orientable.

2) Suppose that $C$ is not essential in $(X,\partial X)$, so that $C$ is
homotopic to a boundary component $S$ of $X$. As $C$ is essential in
$(X,\partial_{0}X)$, it follows that $S$ is not contained in $\partial_{0}X$
or in $\partial_{1}X$. If $X$ has negative Euler characteristic, there is a
simple arc $\mu$ in $X$ with both ends in $S$ which is essential in
$(X,\partial X)$, and so crosses $C$. By choosing the ends of $\mu$ to lie in
$\partial_{0}X$, we obtain a contradiction. If $X$ has zero Euler
characteristic, and is orientable, it must be an annulus or $D^{2}(2,2)$, the
$2$--disk with two interior cone points each labeled $2$, as $X$ has non-empty
boundary. Note that $D^{2}(2,2)$ is double covered by the annulus. Thus in
general, if $X$ has zero Euler characteristic, it is covered by the annulus.
In particular $\partial X$ has $1$ or $2$ components. In the first case, there
is again an essential simple arc $\lambda$ in $(X,\partial X)$ with boundary
in $\partial_{0}X$ which must cross $C$. See Figures \ref{fig1}e), f), h), i)
and j). In the second case, the two boundary components are homotopic, so that
$C$ is homotopic to each. Thus neither is contained in $\partial_{0}X$ or in
$\partial_{1}X$, and there is a simple arc in $X\ $with ends in $\partial
_{0}X$ which joins these two boundary components, and so must be essential and
cross $C$. See Figures \ref{fig1}a), b), c), d) and g). These contradictions
complete the proof that $C$ must be essential in $(X,\partial X)$, as required.

3) As $\chi(X)>0$, the orbifold fundamental group of $X$ must be finite, so
that $X$ cannot contain any "simple closed curve" which is essential in
$(X,\partial_{0}X)$.

If $\chi(X)>0$, and $\partial X$ is non-empty, the universal orbifold cover of
$X$ must be the $2$--disc, so that $X$ is either a cone or the quotient of a
cone by a reflection. Let $D^{2}(p)$ denote the $2$--orbifold with underlying
surface the $2$--disk and with a single interior cone point of order $p\geq1$,
and let $Y_{p}$ denote the quotient of $D^{2}(p)$ by a reflection. Note that
$D^{2}(1)$ is simply the $2$--disk. The underlying surface of $Y_{p}$ is a
disk $D$, and the boundary $\partial Y_{p}$ consists of a single interval in
$\partial D$ with reflector ends. If $p=1$, the rest of $\partial D$ is a
single mirror, and if $p\geq2$, the rest of $\partial D$ is divided into two
mirrors separated by a boundary cone point, labeled $p$. Let $\left\vert
\partial_{0}X\right\vert $ denote the number of components of $\partial_{0}X$.
In all cases, there is a number $k$ depending on $X$ such that if $\left\vert
\partial_{0}X\right\vert <k$, then there are no essential "simple arcs" in
$(X,\partial_{0}X)$, and if $\left\vert \partial_{0}X\right\vert \geq k$, then
$X$ contains such simple arcs, but no such arc can be isolated. If $X$ is the
disk $D^{2}(1)$, then $k=4$. If $X$ is a cone $D^{2}(p)$, $p\geq2$, then
$k=3$. If $X$ is $Y_{1}$, then $k=3$, and if $X$ is $Y_{p}$, $p\geq2$, then
$k=2$. Note that if $X$ is $D^{2}(p)$, the existence of such $k$ is clear. For
if $\rho$ denotes an orientation preserving homeomorphism of $(X,\partial
_{0}X)$ which sends each component of $\partial_{0}X$ and $\partial_{1}X$ to
the next such component, and if there is an essential simple arc $\lambda$ in
$(X,\partial_{0}X)$, then $\rho(\lambda)$ must cross $\lambda$.
\end{proof}

Now we can prove the following result.

\begin{lemma}
Let $(G,\partial G)$ be an orientable $PD(n+2)$ pair such that $G$ is not
$VPC$. A splitting of $G$ dual to an annulus or torus of $(G,\partial G)$
which is isolated in $\mathcal{S}_{n,n+1}(G)$ is either an edge splitting of
$\Gamma_{n,n+1}(G)$ or is dual to an annulus enclosed by a $V_{0}$--vertex of
commensuriser type, which is not of solid torus type. Further the family of
all splittings of $G$ dual to an annulus or torus which are isolated in
$\mathcal{S}_{n,n+1}(G)$ is finite.
\end{lemma}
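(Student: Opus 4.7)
The plan is to combine Lemmas~\ref{alphaisenclosedbycommensurisertypevertex}, \ref{bijectionbetweencurvesandtori}, and \ref{essentiality} for the structural claim, and then to derive finiteness from the finiteness of $\Gamma_{n,n+1}(G)$ together with a finiteness statement for isolated essential arcs in a compact $2$--orbifold.

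Take $\alpha$ isolated in $\mathcal{S}_{n,n+1}(G)$ and not an edge splitting of $\Gamma_{n,n+1}(G)$. Lemma~\ref{alphaisenclosedbycommensurisertypevertex} then places $\alpha$ inside a $V_0$--vertex $v$ of commensuriser type, with base $2$--orbifold $X_v$ satisfying $\chi(X_v)<0$, $=0$, or $>0$ according as $v$ is of peripheral Seifert, torus, or solid torus type, and $\partial_0 X_v\neq\emptyset$ in every case. The first step is to rule out that $\alpha$ is dual to a torus. By Lemma~\ref{bijectionbetweencurvesandtori}(1) such an $\alpha$ determines a ``simple closed curve'' $C$ in $X_v$, essential in $(X_v,\partial_0 X_v)$, and isolated in the sense that it is crossed by no essential ``simple closed curve'' or ``simple arc'' of $(X_v,\partial_0 X_v)$. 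Lemma~\ref{essentiality}(3) immediately excludes $\chi(X_v)>0$. When $\chi(X_v)\le 0$, Lemma~\ref{essentiality}(2) upgrades $C$ to an essential curve in $(X_v,\partial X_v)$. In the torus case $X_v$ is covered by an annulus, so each boundary component of $X_v$ is homotopic to $C$; one can then join components of $\partial_0 X_v$ lying in distinct boundary components of $X_v$ (or lying to the two sides of $C$ after accounting for any reflection) by a simple arc that necessarily crosses $C$, contradicting isolation. In the peripheral Seifert case one cuts $X_v$ along $C$; since the pieces have strictly negative Euler characteristic and $\partial_0 X_v$ meets each piece, a standard hyperbolic $2$--orbifold argument supplies either a second essential simple closed curve or an essential simple arc with ends in $\partial_0 X_v$ that crosses $C$, again contradicting isolation. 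Hence $\alpha$ must be dual to an annulus.

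If $\alpha$ is dual to an annulus, Lemma~\ref{bijectionbetweencurvesandtori}(2) produces an isolated essential ``simple arc'' $\lambda$ in $(X_v,\partial_0 X_v)$. If $v$ were of solid torus type then $\chi(X_v)>0$ and Lemma~\ref{essentiality}(3) would forbid such an isolated arc. Therefore $v$ is not of solid torus type, establishing the structural statement.

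For finiteness, the underlying graph of $\Gamma_{n,n+1}(G)$ is finite, so its edge splittings and its $V_0$--vertices of commensuriser type not of solid torus type form a finite collection. For each such $v$ the isotopy classes of isolated essential ``simple arcs'' in $(X_v,\partial_0 X_v)$ form a finite set: these correspond bijectively to involution-invariant isolated essential simple closed curves in the double $DX_v$ of $X_v$ along $\partial_0 X_v$, whose finiteness is the $2$--orbifold analogue of the finiteness of the Waldhausen decomposition in \cite{NS}. The main obstacle I anticipate is the crossing-curve construction in the peripheral Seifert torus case: producing an explicit essential arc or curve in $(X_v,\partial_0 X_v)$ that crosses the given essential $C$, while taking due care of any mirror components of $X_v$ and of the way $\partial_0 X_v$ and $\partial_1 X_v$ together partition $\partial X_v$, so that the crossing object really lies in $\mathcal{S}_{n,n+1}(G)$ rather than being absorbed into $\partial_1 X_v$.
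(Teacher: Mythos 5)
Your overall architecture matches the paper's: place $\alpha$ inside a commensuriser $V_{0}$--vertex $v$ via Lemma \ref{alphaisenclosedbycommensurisertypevertex}, translate $\alpha$ into a ``simple closed curve'' or ``simple arc'' in the base orbifold $X_{v}$ via Lemma \ref{bijectionbetweencurvesandtori}, upgrade to essentiality in $(X_{v},\partial X_{v})$ via Lemma \ref{essentiality}(1) or (2), and exclude solid torus type via Lemma \ref{essentiality}(3). That skeleton is right. But there are two deviations, and one contains a real gap that you yourself flag.

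For the torus case, the paper simply cites Corollary 5.10 of \cite{GL}: once $C$ is essential in $(X_{v},\partial X_{v})$, there is another simple closed curve in $X_{v}$ crossing $C$, and that curve automatically gives a torus splitting (it lies in the interior of $X_{v}$, so the partition of $\partial X_{v}$ into $\partial_{0}X_{v}$ and $\partial_{1}X_{v}$ is irrelevant to its membership in $\mathcal{S}_{n,n+1}(G)$). You instead try to manufacture the crossing by hand, separately for $\chi(X_{v})=0$ and $\chi(X_{v})<0$, and you acknowledge at the end that the $\chi<0$ construction is the ``main obstacle'' you have not completed. That is a genuine gap: ``a standard hyperbolic $2$--orbifold argument supplies\dots'' is not a proof, and the claim that the pieces of $X_{v}$ cut along $C$ all have strictly negative Euler characteristic is false in general. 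Moreover, the worry you raise about the crossing object being ``absorbed into $\partial_{1}X_{v}$'' is a red herring in the torus case, since it is a closed curve. Your $\chi(X_{v})=0$ paragraph is also internally inconsistent: you invoke Lemma \ref{essentiality}(2) to conclude $C$ is essential in $(X_{v},\partial X_{v})$, and in the very next clause assert $C$ is homotopic to each boundary component — which is the negation of what you just established and is already the desired contradiction, making the subsequent arc-joining construction superfluous.

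For finiteness, the paper just observes that isolated arcs are pairwise non-crossing, hence may be isotoped to be pairwise disjoint and non-parallel, and a compact $2$--orbifold admits only finitely many disjoint non-parallel essential arcs; combined with the finiteness of the underlying graph of $\Gamma_{n,n+1}(G)$, this finishes. Your route through the double $DX_{v}$ and an appeal to an unproved ``$2$--orbifold analogue'' of the finiteness in \cite{NS} reaches the same conclusion but is indirect and leans on an analogy you would still need to substantiate, where the direct Euler-characteristic count is both easier and standard.
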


\begin{proof}
If $\alpha$ is a splitting dual to an annulus or torus and is not an edge
splitting of $\Gamma_{n,n+1}(G)$, Lemma
\ref{alphaisenclosedbycommensurisertypevertex} tells us that $\alpha$ is
enclosed by a $V_{0}$--vertex $v$ of commensuriser type.

If $\alpha$ is a splitting dual to a torus, it determines an isolated simple
closed curve $C$ in the base orbifold $X_{v}$ of $v$, and $C$ is essential in
$(X_{v},\partial_{0}X_{v})$. Thus part 2) of Lemma \ref{essentiality} implies
that $C$ is essential in $(X_{v},\partial X_{v})$. Now Corollary 5.10 of
\cite{GL} implies there is some simple closed curve in $X\ $which crosses $C$,
which is a contradiction. It follows that a splitting of $G$ dual to a torus
of $(G,\partial G)$ which crosses no element of $\mathcal{S}_{n,n+1}(G)$ must
be an edge splitting of $\Gamma_{n,n+1}(G)$.

If $\alpha$ is a splitting dual to an annulus, it determines an isolated
essential simple arc in $(X_{v},\partial_{0}X_{v})$, which must be essential
in $(X_{v},\partial X_{v})$ by part 1) of Lemma \ref{essentiality}. As the
number of disjoint non-parallel such arcs in $X_{v}$ is finite, and as
$\Gamma_{n,n+1}(G)$ has only finitely many vertices, the result follows.
Finally part 3) of Lemma \ref{essentiality} implies that $v$ cannot be of
solid torus type.
\end{proof}

Now we can proceed to give a complete description of the exceptional
splittings of $G$. These are splittings of $G$ dual to an annulus of
$(G,\partial G)$ which are isolated in $\mathcal{S}_{n,n+1}(G)$ and are not
edge splittings of $\Gamma_{n,n+1}(G)$. Thus each exceptional splitting is
enclosed by some $V_{0}$--vertex $v$ of commensuriser type, which is not of
solid torus type. Let $X_{v}$ be the base orbifold of $v$. Then our annulus
determines an isolated essential simple arc $\lambda$ in $(X_{v},\partial
_{0}X_{v})$, and so $\lambda$ is essential in $(X_{v},\partial X_{v})$, by
Lemma \ref{essentiality}.

In order to give a complete list of cases, the following lemma will be very useful.

\begin{lemma}
\label{d1XintersectCisempty}Let $X$ be a compact $2$--orbifold, with Euler
characteristic $\chi(X)\leq0$, and let $\partial_{1}X$ denote a possibly empty
compact suborbifold of $\partial X$. Let $\partial_{0}X$ denote the closure of
$\partial X-\partial_{1}X$. Let $\lambda$ be an isolated essential simple arc
in $(X,\partial_{0}X)$, such that a point of $\partial\lambda$ lies in a
component $C$ of $\partial X$. Then the following hold:

\begin{enumerate}
\item If $\chi(X)<0$, then $C\subset\partial_{0}X$.

\item If $\chi(X)=0$, and $\partial X$ is connected, then $C\subset
\partial_{0}X$.
\end{enumerate}
\end{lemma}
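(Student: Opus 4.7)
I argue by contradiction. Suppose that $C\not\subset \partial_{0}X$, so that $C$ contains at least one arc component of $\partial_{1}X$. The strategy is to exhibit an essential simple closed curve in $(X,\partial_{0}X)$ that crosses $\lambda$, contradicting the hypothesis that $\lambda$ is isolated. Let $C'$ be a simple closed curve obtained by pushing $C$ slightly into the interior of $X$ along a collar neighbourhood of $C$. Since one endpoint of $\lambda$ lies on $C$ and $\lambda$ is essential in $(X,\partial X)$ by part 1 of Lemma~\ref{essentiality} (using $\chi(X)\leq 0$), the arc $\lambda$ must exit any small collar of $C$ and therefore meets $C'$. This intersection is essential: otherwise $\lambda$ could be isotoped rel $\partial\lambda$ to lie in the collar, forcing $\lambda$ to be homotopic into $\partial X$, contradicting its essentiality in $(X,\partial X)$.

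Next I verify that $C'$ is essential in $(X,\partial_{0}X)$. Because $\chi(X)\leq 0$, no boundary circle of $X$ bounds an orbifold disk in $X$, so $C'\simeq C$ represents a nontrivial element of $\pi_{1}^{orb}(X)$. By the contradiction hypothesis together with $p\in\partial_{0}X\cap C$, the circle $C$ is subdivided into alternating proper arc components of $\partial_{0}X$ and $\partial_{1}X$. Each such arc has trivial orbifold fundamental group, so the infinite-order element represented by $C'$ cannot be freely homotoped into any of them.

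The principal obstruction is that $C'$ might still be freely homotopic in $X$ to another boundary component $C''$ of $X$ lying entirely in $\partial_{0}X$ (or entirely in $\partial_{1}X$). Any such $C''$ cobounds with $C$ an annulus sub-orbifold $A\subseteq X$; since $\partial A=C\cup C''\subset \partial X$ and $X$ is connected, we must have $A=X$, so $X$ is itself an annulus orbifold with boundary $C\sqcup C''$. This is immediately impossible in Case 2, where $\partial X$ is connected. In Case 1 ($\chi(X)<0$), such an $X$ would be an annulus with nontrivial interior singular structure (cone points and/or mirrors). A direct analysis of the small orbifold shows that in any such annulus orbifold every essential simple arc with endpoints in $\partial_{0}X$ is crossed by a second essential arc obtained by altering its winding number around an interior singularity, so no isolated essential simple arc $\lambda$ can exist and the hypothesis is vacuous.

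In every remaining configuration $C'$ is essential in $(X,\partial_{0}X)$ and crosses $\lambda$, contradicting the isolatedness of $\lambda$. Hence $C\subset \partial_{0}X$ in both Case 1 and Case 2. The hardest step is the dispatch of the annulus-orbifold degeneracy in the third paragraph: the parallel-curve argument collapses there, and one must instead invoke the classification of small $2$-orbifolds together with a winding-arc construction to rule out the degeneracy.
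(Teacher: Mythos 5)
Your overall strategy (push $C$ to a parallel curve $C'$, show $C'$ is essential in $(X,\partial_{0}X)$, and show $C'$ crosses $\lambda$) is exactly the one the paper uses, and your crossing argument in the second paragraph is fine. The gap lies in your dispatch of the annulus degeneracy in Case 1.

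The paper's proof of part 1) rests on the single observation that a compact $2$--orbifold with $\chi(X)<0$ cannot have two freely homotopic components of $\partial X$: such an $X$ is a good orbifold admitting a hyperbolic structure with geodesic boundary, and distinct boundary components are distinct geodesics. That immediately shows $C'$ cannot be homotoped into $\partial_{0}X$ or $\partial_{1}X$, and the lemma follows. Your detour instead supposes $C'\simeq C''$ and concludes they cobound an annulus sub-orbifold $A=X$. But freely homotopic simple closed curves cobound an embedded annulus with no interior singular structure, so $A=X$ forces $\chi(X)=0$, contradicting $\chi(X)<0$ outright — and you could have stopped there. You instead assert that $X$ is ``an annulus with nontrivial interior singular structure,'' which is internally inconsistent: in such an orbifold (for instance $S^{1}\times I(p)$ with $\pi_{1}^{orb}\cong\mathbb{Z}\ast\mathbb{Z}_{p}$) the two boundary circles represent non-conjugate classes like $a$ and $(ab)^{-1}$, so they are not freely homotopic and your hypothetical scenario cannot arise. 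Finally, your fallback claim that ``every essential simple arc with endpoints in $\partial_{0}X$ is crossed by a second essential arc obtained by altering its winding number around an interior singularity'' is not proved and, as stated, is false: Figures \ref{fignew3}b) and \ref{fignew4}a)--c) of the paper exhibit isolated essential simple arcs in precisely such annulus orbifolds (which is consistent with the lemma because in those examples the component $C$ meeting $\lambda$ lies entirely in $\partial_{0}X$). Replace the detour with the direct observation that $\chi(X)<0$ forbids freely homotopic boundary components.
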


\begin{remark}
\label{d1Xisempty}It follows that in all cases, if $X$ admits such an arc
$\lambda$, and if $\partial X$ is connected, then $\partial_{1}X$ must be
empty. If $\chi(X)<0$, the same conclusion holds if $\partial X$ has two
components which are joined by $\lambda$.

There are two orbifolds where the hypotheses of the lemma hold and $\chi
(X)=0$, and $\partial X$ is not connected. In each of these cases, $C$ need
not be contained in $\partial_{0}X$. See Figures \ref{fig1}a), \ref{fig1}b)
and \ref{fig1}c).
\end{remark}

\begin{proof}
By Lemma \ref{essentiality}, $\lambda$ is essential in $(X,\partial X)$.

1) As $\chi(X)<0$, it is not possible to have two component of $\partial
X\ $which are homotopic. Thus if $C$ is not contained in $\partial_{0}X$, a
push off $C^{\prime}$ is essential in $(X,\partial_{0}X)$ and crosses
$\lambda$. This contradiction shows that $C$ must be contained in
$\partial_{0}X$, as required.

2) As $\partial X$ is connected, we can use the same argument as in part 1) to
show that $C$ must be contained in $\partial_{0}X$, as required.
\end{proof}

Now we will proceed to list all cases of $(X,\partial_{0}X,\partial_{1}X)$,
where $X$ is a compact $2$--orbifold with $\chi(X)\leq0$ and non-empty
boundary, $\partial_{1}X$ is a possibly empty compact suborbifold of $\partial
X$, and $\partial_{0}X$ is the closure of $\partial X-\partial_{1}X$, and
there is an isolated essential simple arc $\lambda$ in $(X,\partial_{0}X)$. In
all cases, when such an arc exists, it is unique up to isotopy. As $\lambda$
has at least one boundary point, which must lie in $\partial_{0}X$, it follows
that $\partial_{0}X$ is non-empty. To find an isolated essential simple arc
$\lambda$ we have to find all essential simple arcs and omit those that cross
others. In what follows we have not shown all these arcs, only the isolated
ones. (We show some examples with all possible arcs in Figure \ref{fignew5}.)

Recall from Lemma \ref{bijectionbetweencurvesandtori} that any isolated
essential arc in the base orbifold $X_{v}$ of a $V_{0}$--vertex $v$ of
$\Gamma_{n,n+1}(G)$ of commensuriser type gives rise to an exceptional annulus
in $\mathcal{S}_{n,n+1}(G)$, under the assumption that $G$ is not $VPC$. In
particular, this excludes the situation where $\pi_{1}^{orb}(X_{v})$ is $VPC1$
and $\partial_{1}X_{v}$ is empty. Thus in Figure \ref{fig1}, the isolated arcs
shown in \ref{fig1}a), \ref{fig1}b) and \ref{fig1}c) are the only cases which
are relevant to finding exceptional annuli.

However if $G$ is $VPC(n+1)$ and is $VPCn$--by--$\pi_{1}^{orb}(X)$ where $X$
is a $2$--orbifold such that $\pi_{1}^{orb}(X)$ is $VPC1$ and $\partial_{1}X$
is empty, then an isolated arc in $X_{v}$ still determines an essential
annulus in $(G,\partial G)$, but that annulus need not be isolated. For
example, consider the isolated arcs shown in Figures \ref{fig1}e) and
\ref{fig1}f). The orientable $3$--dimensional Seifert fibre spaces over the
orbifolds in these two figures are each homeomorphic to the twisted
$I$--bundle over the Klein bottle with orientable total space, and the annuli
determined by the isolated arcs cross each other. For a discussion of this
example, see page 15 in \cite{SS03}. Higher dimensional examples can be
obtained from this example by taking the product with circles.

In drawing the orbifold $X$, the pictured boundary consists of the orbifold
boundary $\partial X$ and mirrors. The mirrors are drawn in thick lines and
$\partial X$ in thin lines. We then proceed to the division of $\partial X$
into $\partial_{0}X$ and $\partial_{1}X$. In the following pictures
$\partial_{0}X$ is still drawn in thin lines, $\partial_{1}X$ in dashed lines,
and the isolated arc $\lambda$ in dotted lines. Figure \ref{fig1} shows all
examples with $\chi(X)=0$. Each of the orbifolds in Figure \ref{fig1} is
covered by the annulus, and so has orbifold fundamental group which is $VPC1$.

\begin{figure}[ph]
%%%%%%% FIGURE 1
\centering
%%%%%%%%%%%%%%%%%%%%%%%%%%%%1
\begin{subfigure}[b]{0.23\textwidth}%%%fig a
\centering
\begin{tikzpicture}
\newcommand{\outrad}{1}
\newcommand{\inrad}{0.4}
%\draw [red] (0,0) circle (\inrad);
\draw (0,\inrad) arc[start angle=90, end angle=270, radius=\inrad];
\draw [dashed] (0,\inrad) arc[start angle=90, end angle=-90, radius=\inrad];
\node [right ] at (\inrad,0) {\scriptsize $\partial_1$};
\draw (0,0) circle (\outrad);
\draw [thick, dotted] (-\inrad,0) -- node [above] {\scriptsize$\lambda$} (-\outrad,0);
\node [above] at (45:\outrad) {\scriptsize$\partial_0$};
\node [below=-2pt ] at (-120:\inrad) {\scriptsize$\partial_0$};
\node [below] at (0,-\outrad) {\phantom{\scriptsize 2}}; % just a hack to move it up in line with other subfigs
\end{tikzpicture}
\caption{}
\end{subfigure}
\hfil
%%%%%%%%%%%%%%%%%%%%%%%%%%%%2
\begin{subfigure}[b]{0.23\textwidth}%%%fig d
\centering
\begin{tikzpicture}
\newcommand{\w}{1cm}
\newcommand{\h}{1cm}
\draw [very thick] (-\w,\h) -- node [above] {\scriptsize 2} (\w,\h);
\draw [very thick] (-\w,-\h) -- node [below] {\scriptsize 2} (\w,-\h);
\draw [thick, dotted] (-\w, -0.5\h) -- node [above] {\scriptsize$\lambda$} (\w,-0.5\h);
\draw (-\w,-\h) -- (-\w,\h);
\draw (\w,-\h) -- (\w,0);
\draw [dashed] (\w,0) -- (\w,\h);
\node [below left] at (-\w,0) {\scriptsize$\partial_0$};
\node [above right] at (\w,-0.5\h) {\scriptsize$\partial_0$};
\node [ right] at (\w,0.5\h) {\scriptsize$\partial_1$};
\node [above right=-2pt] at (\w,\h) {\scriptsize 2};
\node [below right=-2pt] at (\w,-\h) {\scriptsize 2};
\node [above left=-2pt] at (-\w,\h) {\scriptsize 2};
\node [below left=-2pt] at (-\w,-\h) {\scriptsize 2};
\end{tikzpicture}
\caption{}
\end{subfigure}
\hfil
%%%%%%%%%%%%%%%%%%%%%%%%%%%%3
\begin{subfigure}[b]{0.23\textwidth}%%%fig e
\centering
\begin{tikzpicture}
\newcommand{\w}{1cm}
\newcommand{\h}{1cm}
\draw [very thick] (-\w,\h) -- node [above] {\scriptsize 2} (\w,\h);
\draw [very thick] (-\w,-\h) -- node [below] {\scriptsize 2} (\w,-\h);
\draw [thick, dotted] (-\w, 0) -- node [above] {\scriptsize$\lambda$} (\w,0);
\draw (-\w,-\h) -- (-\w,\h);
\draw [dashed] (\w,-\h) -- (\w,-0.3\h);
\draw (\w,-0.3\h) -- (\w,0.3\h);
\draw [dashed] (\w,0.3\h) -- (\w,\h);
\node [below left] at (-\w,0) {\scriptsize$\partial_0$};
\node [ right] at (\w,0) {\scriptsize$\partial_0$};
\node [below right] at (\w,-0.3\h) {\scriptsize$\partial_1$};
\node [above right] at (\w,0.3\h) {\scriptsize$\partial_1$};
\node [above right=-2pt] at (\w,\h) {\scriptsize 2};
\node [below right=-2pt] at (\w,-\h) {\scriptsize 2};
\node [above left=-2pt] at (-\w,\h) {\scriptsize 2};
\node [below left=-2pt] at (-\w,-\h) {\scriptsize 2};
\end{tikzpicture}
\caption{}
\end{subfigure}
%\\[1mm]
%%%%%%%%%%%%%%%%%%%%%%%%%%%%%%%%%%%%4
\hfil
\begin{subfigure}[b]{0.23\textwidth}%%%fig b
\centering
\begin{tikzpicture}
\newcommand{\outrad}{1cm}
\newcommand{\inrad}{0.4cm}
\draw (0,0) circle (\inrad);
\draw (0,0) circle (\outrad);
\draw [thick, dotted] (\inrad,0) -- node [above] {\scriptsize$\lambda$} (\outrad,0);
\node [above ] at (45:\outrad) {\scriptsize$\partial_0$};
\node [below ] at (80:\inrad) {\scriptsize$\partial_0$};
\node [below] at (0,-\outrad) {\phantom{\scriptsize 2}}; % just a hack to move it up in line with other subfigs
\end{tikzpicture}
\caption{}
\end{subfigure}
%\hfil
%%%%%%%%%%%%%%%%%%%%%%%%%%%%5
%\hfil
\\[4mm]\begin{subfigure}[b]{0.23\textwidth}%%%fig g
\centering
\begin{tikzpicture}
\newcommand{\outrad}{1cm}
\newcommand{\inrad}{0.4cm}
\draw (0,0) circle (\outrad);
\draw [thick, dotted] (0,-\outrad) --  (0,\outrad);
\node [right] at (0,0.5\outrad) {\scriptsize$\lambda$};
\draw[fill] (-0.5\outrad,0) circle (1pt);
\node [below] at (-0.5\outrad,0) {\scriptsize 2};
\node [below] at (0.5\outrad,0) {\scriptsize 2};
\draw[fill] (0.5\outrad,0) circle (1pt);
\node [above ] at (45:\outrad) {\scriptsize$\partial_0$};
\node [below] at (0,-\outrad) {\phantom{\scriptsize 2}}; % just a hack to move it up in line with other subfigs
\end{tikzpicture}
\caption{}
\end{subfigure}
\hfil
%%%%%%%%%%%%%%%%%%%%%%%%%%%%6
\begin{subfigure}[b]{0.23\textwidth}%%%fig h
\centering
\begin{tikzpicture} %moebius...doing it in sctions
\newcommand{\outrad}{1}
\newcommand{\inrad}{0.4}
\draw (0:\outrad) to [out=45,in=0, out looseness=1.5] (90:\outrad);
\draw (90:\outrad) to [out=180,in=90] (180:\outrad);
\draw (180:\outrad) to [out=-90,in=180] (-90:\outrad);
\draw (-90:\outrad) to [out=0,in=-45, in looseness=1.2] (1.08,-0.08);
%\draw [red] (1,0) to [out=-45,in=0, out looseness=1.5] (-90:\outrad);
\draw  (0.9,0.1) to [out=135,in=90] (180:\inrad);
\draw (180:\inrad) to [out=-90,in=-135] (0:\outrad);
\draw [thick, dotted] (180:\outrad)-- node [above] {\scriptsize$\lambda$} (180:\inrad);
\node [above=2pt ] at (45:\outrad) {\scriptsize$\partial_0$};
\node [below] at (0,-\outrad) {\phantom{\scriptsize 2}}; % just a hack to move it up in line with other subfigs
\end{tikzpicture}
\caption{}
\end{subfigure}
%\\[1mm]
%\\[1mm]
%%%%%%%%%%%%%%%%%%%%%%%%%%%%7
\hfil
\begin{subfigure}[b]{0.23\textwidth}%%%fig c
\centering
\begin{tikzpicture}
\newcommand{\w}{1cm}
\newcommand{\h}{1cm}
\draw [very thick] (-\w,\h) -- node [above] {\scriptsize 2} (\w,\h);
\draw [very thick] (-\w,-\h) -- node [below] {\scriptsize 2} (\w,-\h);
\draw [thick, dotted] (-\w, 0) -- node [above] {\scriptsize$\lambda$} (\w,0);
\draw (-\w,-\h) -- (-\w,\h);
\draw (\w,-\h) -- (\w,\h);
\node [below left] at (-\w,0) {\scriptsize$\partial_0$};
\node [above right] at (\w,0) {\scriptsize$\partial_0$};
\node [above right=-2pt] at (\w,\h) {\scriptsize 2};
\node [below right=-2pt] at (\w,-\h) {\scriptsize 2};
\node [above left=-2pt] at (-\w,\h) {\scriptsize 2};
\node [below left=-2pt] at (-\w,-\h) {\scriptsize 2};
\end{tikzpicture}
\caption{}
\end{subfigure}
%\\[1mm]
%spread out a bit
%\par
\hfil
%%%%%%%%%%%%%%%%%%%%%%%%%%%%%%%%%%%%8
\begin{subfigure}[b]{0.23\textwidth}%%%fig f
\centering
\begin{tikzpicture}
\newcommand{\outrad}{1cm}
\newcommand{\inrad}{0.4cm}
\draw [very thick] (0,0) circle (\inrad);
\draw (0,0) circle (\outrad);
\draw [thick, dotted] (-\outrad,0) -- node [above] {\scriptsize$\lambda$} (-\inrad,0);
\node [above ] at (45:\outrad) {\scriptsize$\partial_0$};
\node [right=-1pt ] at (\inrad,0) {\scriptsize $2$};
\node [below] at (0,-\outrad) {\phantom{\scriptsize 2}}; % just a hack to move it up in line with other subfigs
\end{tikzpicture}
\caption{}
\end{subfigure}
%\hfil
%spread out a bit
%\caption{\label{fig1}The case in which $\chi(X)=0$}%
%\end{figure}
%%
%\begin{figure}[ptb]
%\ContinuedFloat
%\centering
%\par
%%%%%%%%%%%%%%%%%%%%%%%%%%%%%%%%%%%%9
\\[4mm]\begin{subfigure}[b]{0.3\textwidth}%%%fig i
\centering
\begin{tikzpicture}
\newcommand{\outrad}{1cm}
\newcommand{\inrad}{0.4cm}
\draw [name path=ellipse] (0,1) arc[start angle=90, end angle=270, x radius=2.0\outrad, y radius=\outrad];
\node at (155:1.9\outrad) {\scriptsize$\partial_0$}; %manual placement of delta_0 label
\draw [very thick] (0,-\outrad) -- (0,\outrad);
\path[name path=con] (-0.8\outrad,-\outrad) -- (-0.8\outrad,\outrad); %invisible, for calculating intersection
\draw[thick, dotted,  name intersections={of=ellipse and con}] (intersection-1) -- node [above right] {\scriptsize$\lambda$} (intersection-2); %vertical dotted line
\draw[fill] (-1.4\outrad,0) circle (1pt);
\node [below] at (-1.4\outrad,0) {\scriptsize 2};
\node [above right=-2pt] at (0,\outrad) {\scriptsize 2};
\node [below right=-2pt] at (0,-\outrad) {\scriptsize 2};
\node [right=-1pt] at (0,0) {\scriptsize 2};
\end{tikzpicture}
\caption{}
\end{subfigure}
\hfil
%%%%%%%%%%%%%%%%%%%%%%%%%%%%%%%%%%%%10
\begin{subfigure}[b]{0.3\textwidth}%%%fig j
\centering
\begin{tikzpicture}
\newcommand{\w}{1cm}
\newcommand{\h}{1cm}
\draw [very thick] (-\w,\h) -- node [above] {\scriptsize 2} (\w,\h);
\draw [very thick] (-\w,-\h) -- node [below] {\scriptsize 2} (\w,-\h);
\draw [thick, dotted] (-\w, 0) -- node [above] {\scriptsize$\lambda$} (\w,0);
\draw (-\w,-\h) -- (-\w,\h);
\draw [very thick] (\w,-\h) -- (\w,\h);
\node [below left] at (-\w,0) {\scriptsize$\partial_0$};
\node [above right] at (\w,0) {\scriptsize$2$};
\node [below left=-2pt] at (-\w,-\h) {\scriptsize$2$};
\node [above left=-2pt] at (-\w,\h) {\scriptsize$2$};
\node [above right=-2pt] at (\w,\h) {\scriptsize 2};
\node [below right=-2pt] at (\w,-\h) {\scriptsize 2};
\end{tikzpicture}
\caption{}
\end{subfigure}
\caption{The case in which $\chi(X)=0$}%
\label{fig1}%
\end{figure}

We next consider the cases with $\chi(X)<0$. Recall that $\lambda$ is a
"simple arc" in $(X,\partial_{0}X)$ which is essential in $(X,\partial X)$ and
crosses no essential \textquotedblleft simple closed curve" in $X$. Corollary
5.10\ of \cite{GL} tells us that $X$ admits no essential "simple closed
curves" at all. Thus $X$ lies on the list of ten orbifolds given in
Proposition 5.12 of \cite{GL}. However, two of these ten have no boundary. In
Figure \ref{fignew2}, we show the remaining eight orbifolds. For each of these
eight orbifolds, we use Lemmas \ref{essentiality} and
\ref{d1XintersectCisempty} to determine the possible decompositions of
$\partial X$ into $\partial_{0}X$ and $\partial_{1}X$ which admit an isolated
essential arc, and we show all these cases in Figures \ref{fignew3} and
\ref{fignew4}. Figure \ref{fignew3} shows the cases where $\partial_{1}X$ is
empty, and Figure \ref{fignew4} shows the other cases.

\begin{figure}[ph]
%fig new2
\centering
%%%%%%%%%%%%%%%%%%%%%%%%%%%%%%1
\begin{subfigure}[b]{0.23\textwidth}%%%fig 2a
\centering
\begin{tikzpicture}
\newcommand{\outrad}{1cm}
\newcommand{\inrad}{0.4cm}
\draw (0,0) circle (\outrad);
\draw[fill] (-0.5\outrad,0) circle (1pt);
\node [below] at (-0.5\outrad,0) {\scriptsize $p$};
\node [below] at (0.5\outrad,0) {\scriptsize $q$};
\draw[fill] (0.5\outrad,0) circle (1pt);
\node [above ] at (45:\outrad) {\scriptsize$\partial$};
\end{tikzpicture}
\caption{}
\end{subfigure}
%%%%%%%%%%%%%%%%%%%%%%%%%%%%%%2
\hfil
\begin{subfigure}[b]{0.23\textwidth}%%%fig 3a
\centering
\begin{tikzpicture}
\newcommand{\outrad}{1cm}
\newcommand{\inrad}{0.4cm}
\draw (0,0) circle (\inrad);
\draw (0,0) circle (\outrad);
\node [above ] at (45:\outrad) {\scriptsize$\partial$};
\node [below ] at (80:\inrad) {\scriptsize$\partial$};
\draw[fill] (-0.7\outrad,0) circle (1pt);
\node [below] at (-0.7\outrad,0) {\scriptsize $p$};
\end{tikzpicture}
\caption{}
\end{subfigure}
%%%%%%%%%%%%%%%%%%%%%%%%%%%%%%3
\hfil
\begin{subfigure}[b]{0.23\textwidth}%%%fig 4a
\centering
\begin{tikzpicture}
\newcommand{\outrad}{1cm}
\newcommand{\inrad}{0.4cm}
\draw (0,0) circle [x radius=1.5\outrad, y radius=\outrad];
\node at (45:1.4\outrad) {\scriptsize $\partial$};
\draw (-0.6,0) circle (\inrad); % left inner circle
\node [below right] at (-0.6,-\inrad) {\scriptsize $\partial$};
\draw (0.6,0) circle (\inrad); % right inner circle
\node [above left] at (0.6,\inrad) {\scriptsize $\partial$};
\end{tikzpicture}
\caption{}
\end{subfigure}
%%%%%%%%%%%%%%%%%%%%%%%%%%%%%%4
\hfil
\begin{subfigure}[b]{0.23\textwidth} %fig 5a
\centering
\begin{tikzpicture}
\newcommand{\outrad}{1cm}
\newcommand{\inrad}{0.4cm}
\draw [very thick] (-\outrad,0) to [out=60, in=120] (\outrad,0);
\draw  (-\outrad,0) to [out=-60, in=-120] node [below=-1pt] {\scriptsize$\partial$}(\outrad,0);
\draw[fill] (0,0) circle (1pt);
\node [below] at (0,0) {\scriptsize $p$};
\node [below] at (0,0) {\scriptsize $p$};
\node [below] at (0,0) {\scriptsize $p$};
\node [above] at (0,0.5) {\scriptsize $2$};
\node [left] at (-\outrad,0) {\scriptsize $2$};
\node [right] at (\outrad,0) {\scriptsize $2$};
\end{tikzpicture}
\caption{}
\end{subfigure}
%%%%%%%%%%%%%%%%%%%%%%%%%%%%%%5
\\[4mm]\begin{subfigure}[b]{0.23\textwidth} %fig 6a
\centering
\begin{tikzpicture}
\newcommand{\outrad}{1cm}
\newcommand{\inrad}{0.4cm}
\draw [very thick] (\outrad,0) arc [start angle=0, end angle=180, radius=\outrad];
\draw  (\outrad,0) arc [start angle=0, end angle=-180, radius=\outrad];
\draw (0,0) circle (\inrad);
\node [above] at (0,\outrad) {\scriptsize $2$};
\node [left] at (-\outrad,0) {\scriptsize $2$};
\node [right] at (\outrad,0) {\scriptsize $2$};
\node [below left] at (-160:0.9) {\scriptsize $\partial$};
\node [below] at (0,-\outrad) {\phantom{\scriptsize $2$}}; %hack to move up
\end{tikzpicture}
\caption{}
\end{subfigure}
%%%%%%%%%%%%%%%%%%%%%%%%%%%%%%
\hfil
\begin{subfigure}[b]{0.23\textwidth}%%%fig 7a
\centering
\begin{tikzpicture}
\newcommand{\w}{1cm}
\newcommand{\h}{1cm}
\draw [very thick] (-\w,\h) -- node [above] {\scriptsize 2} (\w,\h);
\draw [very thick] (-\w,-\h) -- node [below] {\scriptsize 2} (\w,-\h);
\draw (-\w,-\h) -- (-\w,\h);
\draw [very thick] (\w,-\h) -- (\w,\h);
%\node [below left] at (-\w,0) {\scriptsize$\partial_0$};
\node [ right] at (\w,0) {\scriptsize$2$};
\node [below right=-2pt] at (\w,-\h) {\scriptsize$2q$};
\node [above right=-2pt] at (\w,\h) {\scriptsize$2p$};
\node [below left] at (-\w,0) {\scriptsize$\partial$};
\node [below left=-2pt] at (-\w,-\h) {\scriptsize$2$};
\node [above left=-2pt] at (-\w,\h) {\scriptsize$2$};
\end{tikzpicture}
\caption{}
\end{subfigure}
%%%%%%%%%%%%%%%%%%%%%%%%%%%%%%
\hfil
\begin{subfigure}[b]{0.23\textwidth}%%%fig 8a
\centering
\begin{tikzpicture}
\newcommand{\outrad}{1cm}
\newcommand{\inrad}{0.4cm}
\draw [very thick, rotate=0] (-126:\outrad) to [out=20, in=160] node [below] {\scriptsize 2} (-54:\outrad);
\draw [rotate=72] (-126:\outrad) to [out=20, in=160] node [right=-1pt]{\scriptsize$ \partial$}(-54:\outrad);
\draw [very thick, rotate=144] (-126:\outrad) to [out=20, in=160] node [above right ] {\scriptsize 2} (-54:\outrad);
\draw [very thick, rotate=-144] (-126:\outrad) to [out=20, in=160] node [above left ] {\scriptsize 2} (-54:\outrad);
\draw [rotate=-72] (-126:\outrad) to [out=20, in=160]  node [left=-1pt]{\scriptsize$ \partial$} (-54:\outrad);
\node [above=-1pt] at (0,\outrad) {\scriptsize $2p$};
\node [right=-1pt] at (18:\outrad) {\scriptsize $2$};
\node [left=-1pt] at (162:\outrad) {\scriptsize $2$};
\node [below right=-2pt] at (-54:\outrad) {\scriptsize $2$};
\node [below left=-2pt] at (-126:\outrad) {\scriptsize $2$};
\end{tikzpicture}
\caption{}
\end{subfigure}
%%%%%%%%%%%%%%%%%%%%%%%%%%%%%%
\hfil
\begin{subfigure}[b]{0.23\textwidth}%%%fig 9a
\centering
\begin{tikzpicture}
\newcommand{\outrad}{1cm}
\draw [rotate=0] (-120:\outrad) to  (-60:\outrad); %bottom
\draw [rotate=60, very thick] (-120:\outrad) to  (-60:\outrad);
\draw [rotate=120] (-120:\outrad) to  (-60:\outrad);
\draw [rotate=180, very thick] (-120:\outrad) to  (-60:\outrad);
\draw [rotate=-60, very thick] (-120:\outrad) to  (-60:\outrad);
\draw [rotate=-120] (-120:\outrad) to  (-60:\outrad);
\node [below right=-2pt] at (-60:\outrad) {\scriptsize$2$};
\node [below left=-2pt] at (-120:\outrad) {\scriptsize$2$};
\node [right=-1pt] at (0:\outrad) {\scriptsize$2$};
\node [above right=-2pt] at (60:\outrad) {\scriptsize$2$};
\node [above left=-2pt] at (120:\outrad) {\scriptsize$2$};
\node [left=-1pt] at (180:\outrad) {\scriptsize$2$};
\node [below right=-2pt] at (-30:0.866\outrad) {\scriptsize$2$};
\node [above=-1pt] at (90:0.866\outrad) {\scriptsize$2$};
\node [below left=-2pt] at (-150:0.866\outrad) {\scriptsize$2$};
\node [above right=-2pt] at (30:0.866\outrad) {\scriptsize$\partial$};
\node [above left=-2pt] at (150:0.866\outrad) {\scriptsize$\partial$};
\node [below=-1pt] at (-90:0.866\outrad) {\scriptsize$\partial$};
\end{tikzpicture}
\caption{}
\end{subfigure}
\caption{The eight orbifolds with $\chi<0$, non-empty boundary, and no
essential closed curves}%
\label{fignew2}%
\end{figure}

Here is a verbal description of the eight orbifolds in Figure \ref{fignew2},
and of the possible isolated essential simple arcs.

\begin{enumerate}
\item $X=D^{2}(p,q)$, the $2$--disc with two interior cone points of orders
$p,q\geq2$ with at least one strictly larger than $2$. There is an isolated
essential simple arc in $(X,\partial_{0}X)$ iff $\partial_{1}X$ is empty.

\item $X=S^{1}\times I(p)$, $p\geq2$, the annulus with one interior cone
point. There is an isolated essential simple arc in $(X,\partial_{0}X)$ iff
$\partial_{1}X$ is empty or is a component of $\partial X$.

\item $X$ is a pair of pants, with no singular points. There is an isolated
essential simple arc in $(X,\partial_{0}X)$ iff $\partial_{1}X$ is a component
of $\partial X$, or is the union of two components of $\partial X$.

\item The underlying surface of $X$ is a disc $D$. The boundary of $D$
contains one mirror interval, so that $\partial X$ is the closure of the
complement of this mirror interval in $\partial D$, and $X$ has one interior
cone point labeled $p$. There is an isolated essential arc in $(X,\partial
_{0}X)$ iff $\partial_{1}X$ is empty.

\item The underlying surface of $X$ is an annulus $A$. The boundary of $A$
contains one mirror interval, so that $\partial X$ is the closure of the
complement of this mirror interval in $\partial A$. There is an isolated
essential simple arc in $(X,\partial_{0}X)$ iff $\partial_{1}X$ is empty or is
a component of $\partial X$.

\item The underlying surface of $X$ is a disc $D$, and the boundary $\partial
X$ of $X$ consists of a single interval in $\partial D$ with reflector ends,
and the rest of $\partial D$ is divided into three mirrors separated by two
boundary cone points, labeled $2p$ and $2q$. There is an isolated essential
simple arc in $(X,\partial_{0}X)$ iff $\partial_{1}X$ is empty.

\item The underlying surface of $X$ is a disc $D$. The boundary $\partial X$
of $X$ consists of two disjoint intervals in $\partial D$ each with reflector
ends, and the rest of $\partial D$ is divided into a single mirror and two
mirrors separated by a boundary cone point, labeled $2p$. There is an isolated
essential simple arc in $(X,\partial_{0}X)$ iff $\partial_{1}X$ is empty or is
a component of $\partial X$.

\item The underlying surface of $X$ is a disc $D$. The boundary $\partial X$
of $X$ consists of three disjoint intervals in $\partial D$ each with
reflector ends, and the rest of $\partial D$ consists of three mirrors. There
is an isolated essential simple arc in $(X,\partial_{0}X)$ iff $\partial_{1}X$
is a component of $\partial X$, or is the union of two components of $\partial
X$.
\end{enumerate}

\begin{figure}[ph]
%fig new3
\centering
%~
%yields
%~
%%%%%%%%%%%%%%%%%%%%%%%%%%%%1
\begin{subfigure}[b]{0.23\textwidth}%%%fig 2b
\centering
\begin{tikzpicture}
\newcommand{\outrad}{1cm}
\newcommand{\inrad}{0.4cm}
\draw (0,0) circle (\outrad);
\draw [thick, dotted] (0,-\outrad) --  (0,\outrad);
\node [right] at (0,0.5\outrad) {\scriptsize$\lambda$};
\draw[fill] (-0.5\outrad,0) circle (1pt);
\node [below] at (-0.5\outrad,0) {\scriptsize $p$};
\node [below] at (0.5\outrad,0) {\scriptsize $q$};
\draw[fill] (0.5\outrad,0) circle (1pt);
%\node [above ] at (60:1.1\outrad) {\scriptsize$\partial=\partial_0$, $\partial_1=\emptyset$};
\node [above=-3pt ] at (60:1.1\outrad) {\scriptsize$\partial_0$};
\end{tikzpicture}
\caption{}
\end{subfigure}
%%%%%%%%%%%%%%%%%%%%%%%%%%%%2
\hfil
\begin{subfigure}[b]{0.23\textwidth}%%%fig 3b
\centering
\begin{tikzpicture}
\newcommand{\outrad}{1cm}
\newcommand{\inrad}{0.4cm}
\draw (0,0) circle (\inrad);
\draw (0,0) circle (\outrad);
\node [above ] at (45:\outrad) {\scriptsize$\partial_0$};
\node [below ] at (80:\inrad) {\scriptsize$\partial_0$};
\draw[fill] (-0.7\outrad,0) circle (1pt);
\node [below] at (-0.7\outrad,0) {\scriptsize $p$};
\draw [thick, dotted] (90:\inrad) -- node [right=-1pt] {\scriptsize$\lambda$}  (90:\outrad);
\end{tikzpicture}
\caption{}
\end{subfigure}
%%%%%%%%%%%%%%%%%%%%%%%%%%%%3
\hfil
\begin{subfigure}[b]{0.23\textwidth} %fig 5b
\centering
\begin{tikzpicture}
\newcommand{\outrad}{1cm}
\newcommand{\inrad}{0.4cm}
\draw [very thick] (-\outrad,0) to [out=60, in=120] (\outrad,0);
\draw [name path=lower] (-\outrad,0) to [out=-60, in=-120] node [below=-1pt] {\scriptsize$\partial_0$} (\outrad,0);
\draw[fill] (0,0) circle (1pt);
\node [below] at (0,0) {\scriptsize $p$};
\node [below] at (0,0) {\scriptsize $p$};
\node [below] at (0,0) {\scriptsize $p$};
\node [above] at (0,0.5) {\scriptsize $2$};
\node [left] at (-\outrad,0) {\scriptsize $2$};
\node [right] at (\outrad,0) {\scriptsize $2$};
\path [name path=con] (-1,-0.2) -- (1,-0.2); %invisible, for calculating intersection
\draw[thick, dotted,  name intersections={of=lower and con}] (intersection-1) to [out=60, in=120] node [pos=0.3,below=-1pt] {\scriptsize$\lambda$} (intersection-2); %curved dotted line
\end{tikzpicture}
\caption{}
\end{subfigure}
%%%%%%%%%%%%%%%%%%%%%%%%%%%%4
\hfil
\begin{subfigure}[b]{0.23\textwidth} %fig 6b
\centering
\begin{tikzpicture}
\newcommand{\outrad}{1cm}
\newcommand{\inrad}{0.4cm}
\draw [very thick] (\outrad,0) arc [start angle=0, end angle=180, radius=\outrad];
\draw  (\outrad,0) arc [start angle=0, end angle=-180, radius=\outrad];
\draw (0,0) circle (\inrad);
\node [above] at (0,\outrad) {\scriptsize $2$};
\node [left] at (-\outrad,0) {\scriptsize $2$};
\node [right] at (\outrad,0) {\scriptsize $2$};
\draw [thick, dotted] (-90:\inrad) -- node [right]  {\scriptsize$\lambda$}  (-90:\outrad);
\node [below left] at (180:0.2) {\scriptsize $\partial_0$};
\node [below left] at (-160:0.8) {\scriptsize $\partial_0$};
\end{tikzpicture}
\caption{}
\end{subfigure}
%%%%%%%%%%%%%%%%%%%%%%%%%%%%5
%\hfil
\\[4mm]\begin{subfigure}[b]{0.23\textwidth}%%%fig 7b
\centering
\begin{tikzpicture}
\newcommand{\w}{1cm}
\newcommand{\h}{1cm}
\draw [very thick] (-\w,\h) -- node [above] {\scriptsize 2} (\w,\h);
\draw [very thick] (-\w,-\h) -- node [below] {\scriptsize 2} (\w,-\h);
\draw (-\w,-\h) -- (-\w,\h);
\draw [very thick] (\w,-\h) -- (\w,\h);
\node [ right] at (\w,0) {\scriptsize$2$};
\node [below right=-2pt] at (\w,-\h) {\scriptsize$2q$};
\node [above right=-2pt] at (\w,\h) {\scriptsize$2p$};
\draw [thick, dotted] (-\w, 0) -- node [above] {\scriptsize$\lambda$} (\w,0);
\node [below left] at (-\w,0) {\scriptsize$\partial_0$};
\node [below left=-2pt] at (-\w,-\h) {\scriptsize$2$};
\node [above left=-2pt] at (-\w,\h) {\scriptsize$2$};
\end{tikzpicture}
\caption{}
\end{subfigure}
%%%%%%%%%%%%%%%%%%%%%%%%%%%%6
\hfil
\begin{subfigure}[b]{0.23\textwidth}%%%fig 8b
\centering
\begin{tikzpicture}
\newcommand{\outrad}{1cm}
\newcommand{\inrad}{0.4cm}
\draw [very thick, rotate=0] (-126:\outrad) to [out=20, in=160] node [below] {\scriptsize 2} (-54:\outrad);
\draw [name path=right, rotate=72] (-126:\outrad) to [out=20, in=160] node [right=-1pt]{\scriptsize$ \partial_0$} (-54:\outrad);
\draw [very thick, rotate=144] (-126:\outrad) to [out=20, in=160] node [above right ] {\scriptsize 2} (-54:\outrad);
\draw [very thick, rotate=-144] (-126:\outrad) to [out=20, in=160] node [above left ] {\scriptsize 2} (-54:\outrad);
\draw [name path= left,rotate=-72] (-126:\outrad) to [out=20, in=160] node [left=-1pt]{\scriptsize$ \partial_0$} (-54:\outrad);
\node [above] at (0,\outrad) {\scriptsize $2p$};
\path [name path=con] (-1,-0.3) -- (1,-0.3); %invisible, for calculating intersection
\path [red, name intersections={of=left and con, by={A}}] (A) circle (2pt);%invisible, for calculating intersection
\path [red, name intersections={of=right and con, by={B}}] (B) circle (2pt);%invisible, for calculating intersection
\draw [thick, dotted] (A) to [out=20, in=160] node [above]  {\scriptsize$\lambda$} (B);
\node [right=-1pt] at (18:\outrad) {\scriptsize $2$};
\node [left=-1pt] at (162:\outrad) {\scriptsize $2$};
\node [below right=-2pt] at (-54:\outrad) {\scriptsize $2$};
\node [below left=-2pt] at (-126:\outrad) {\scriptsize $2$};
\end{tikzpicture}
\caption{}
\end{subfigure}
%main caption
\caption{Cases with $\partial_{1}X=\varnothing$}%
\label{fignew3}%
\end{figure}

\begin{figure}[ph]
%fig new4
\centering
%~
%yields
%~
%%%%%%%%%%%%%%%%%%%%%%%%%%%%1
\begin{subfigure}[b]{0.23\textwidth}%%%fig 3c
\centering
\begin{tikzpicture}
\newcommand{\outrad}{1}
\newcommand{\inrad}{0.4}
\draw [dashed] (0,0) circle (\inrad);
\draw (0,0) circle (\outrad);
\node [above ] at (45:\outrad) {\scriptsize$\partial_0$};
\node [below ] at (80:\inrad) {\scriptsize$\partial_1$};
\draw[fill] (-0.7\outrad,0) circle (1pt);
\node [below] at (-0.7\outrad,0) {\scriptsize $p$};
\draw [thick, dotted] (140:\outrad) to [out=-50, in=50] node [pos=0.1,right=-1pt] {\scriptsize$\lambda$} (-140:\outrad);
\end{tikzpicture}
\caption{}
\end{subfigure}
%%%%%%%%%%%%%%%%%%%%%%%%%%%%2
\hfil
\begin{subfigure}[b]{0.23\textwidth}%%%fig 4b
\centering
\begin{tikzpicture}
\newcommand{\outrad}{1cm}
\newcommand{\inrad}{0.4cm}
\draw [name path=ellipse] (0,0) circle [x radius=1.5\outrad, y radius=\outrad];
\node at (45:1.4\outrad) {\scriptsize $\partial_0$};
\draw (-0.6,0) circle (\inrad); % left inner circle
\node [below right] at (-0.6,-\inrad) {\scriptsize $\partial_0$};
\draw [dashed] (0.6,0) circle (\inrad); % right inner circle
\node [above left] at (0.6,\inrad) {\scriptsize $\partial_1$};
\path [name path=con] (-0.6,0) -- (-0.6,\outrad); %invisible, for calculating intersection
\draw[thick, dotted,  name intersections={of=ellipse and con}] (intersection-1) -- node [right] {\scriptsize$\lambda$} (-0.6,\inrad); %vertical dotted line
\end{tikzpicture}
\caption{}
\end{subfigure}
%~
%and
%~
%%%%%%%%%%%%%%%%%%%%%%%%%%%%3
\hfil
\begin{subfigure}[b]{0.23\textwidth}%%%fig 4c
\centering
\begin{tikzpicture}
\newcommand{\outrad}{1cm}
\newcommand{\inrad}{0.4cm}
\draw [name path=ellipse] (0,0) circle [x radius=1.5\outrad, y radius=\outrad];
\node at (45:1.4\outrad) {\scriptsize $\partial_0$};
\draw [dashed] (-0.6,0) circle (\inrad); % left inner circle
\node [below right] at (-0.6,-\inrad) {\scriptsize $\partial_1$};
\draw [dashed] (0.6,0) circle (\inrad); % right inner circle
\node [above left] at (0.6,\inrad) {\scriptsize $\partial_1$};
\draw [thick, dotted] (0,-\outrad) --  (0,\outrad);
\node [below right] at (0,-\inrad) {\scriptsize $\lambda$};
\end{tikzpicture}
\caption{}
\end{subfigure}
%%%%%%%%%%%%%%%%%%%%%%%%%%%%4
\hfil
\begin{subfigure}[b]{0.23\textwidth}%%%fig 6c
\centering
\begin{tikzpicture}
\newcommand{\outrad}{1cm}
\newcommand{\inrad}{0.4cm}
\draw [very thick] (\outrad,0) arc [start angle=0, end angle=180, radius=\outrad];
\draw [name path=lower] (\outrad,0) arc [start angle=0, end angle=-180, radius=\outrad]; % lower semicircle
\draw [dashed] (0,0) circle (\inrad); % inner circle
\node [above] at (0,\outrad) {\scriptsize $2$};
\node [left] at (-\outrad,0) {\scriptsize $2$};
\node [right] at (\outrad,0) {\scriptsize $2$};
\node [below] at (-90:0.3) {\scriptsize $\partial_1$};
\node [below left] at (-100:0.8) {\scriptsize $\partial_0$};
\path [name path=con] (-1,-0.3) -- (1,-0.3); %invisible, for calculating intersection
\draw[thick, dotted,  name intersections={of=lower and con}] (intersection-1) to [out=110, in=0]  (0,0.6) to [out=180, in=70] (intersection-2); %curved dotted line
\node at (-0.7,0) {\scriptsize$\lambda$};
\end{tikzpicture}
\caption{}
\end{subfigure}
%
%~
%and
%~
%\hfil
%%%%%%%%%%%%%%%%%%%%%%%%%%%%5
\\[4mm]\begin{subfigure}[b]{0.23\textwidth}%%%fig 6d
\centering
\begin{tikzpicture}
\newcommand{\outrad}{1cm}
\newcommand{\inrad}{0.4cm}
\draw [very thick] (\outrad,0) arc [start angle=0, end angle=180, radius=\outrad];
\draw [dashed]  (\outrad,0) arc [start angle=0, end angle=-180, radius=\outrad]; %lower semicircle
\draw (0,0) circle (\inrad);
\node [above] at (0,\outrad) {\scriptsize $2$};
\node [left] at (-\outrad,0) {\scriptsize $2$};
\node [right] at (\outrad,0) {\scriptsize $2$};
\draw [thick, dotted] (90:\inrad) -- node [right]  {\scriptsize$\lambda$}  (90:\outrad);
\node [below left] at (180:0.2) {\scriptsize $\partial_0$};
\node [below left] at (-160:0.8) {\scriptsize $\partial_1$};
\end{tikzpicture}
\caption{}
\end{subfigure}
%%%%%%%%%%%%%%%%%%%%%%%%%%%%6
\hfil
\begin{subfigure}[b]{0.23\textwidth}%%%fig 8c
\centering
\begin{tikzpicture}
\newcommand{\outrad}{1cm}
\newcommand{\inrad}{0.4cm}
\draw [name path=bottom,very thick, rotate=0] (-126:\outrad) to [out=20, in=160] node [below] {\scriptsize 2} (-54:\outrad);
\draw [rotate=72,name path=bright] (-126:\outrad) to [out=20, in=160] node [right=-1pt]{\scriptsize$ \partial_0$} (-54:\outrad); % bot right
\draw [very thick, rotate=144] (-126:\outrad) to [out=20, in=160] node [above right ] {\scriptsize 2} (-54:\outrad);
\draw [name path= left,very thick, rotate=-144] (-126:\outrad) to [out=20, in=160] node [above left ] {\scriptsize 2} (-54:\outrad);
\draw [rotate=-72, dashed] (-126:\outrad) to [out=20, in=160] node [left=-1pt]{\scriptsize$ \partial_1$} (-54:\outrad);
\node [above] at (0,\outrad) {\scriptsize $2p$};
\path [name path=con] (126:0.8) -- (-18:0.8); %invisible, for calculating intersection
\path [red, name intersections={of=left and con, by={A}}] (A) circle (2pt);%invisible, for calculating intersection
\path [red, name intersections={of=bright and con, by={B}}] (B) circle (2pt);%invisible, for calculating intersection
\draw [thick, dotted] (A) -- node [below left=-2pt]  {\scriptsize$\lambda$} (B);
\node [right=-1pt] at (18:\outrad) {\scriptsize $2$};
\node [left=-1pt] at (162:\outrad) {\scriptsize $2$};
\node [below right=-2pt] at (-54:\outrad) {\scriptsize $2$};
\node [below left=-2pt] at (-126:\outrad) {\scriptsize $2$};
\end{tikzpicture}
\caption{}
\end{subfigure}
%%%%%%%%%%%%%%%%%%%%%%%%%%%%7
\hfil
\begin{subfigure}[b]{0.23\textwidth}%%%fig 9b
\centering
\begin{tikzpicture}
\newcommand{\outrad}{1cm}
\draw [rotate=0, dashed] (-120:\outrad) to  (-60:\outrad); %bottom
\draw [rotate=60, very thick] (-120:\outrad) to  (-60:\outrad);
\draw [rotate=120, name path=right] (-120:\outrad) to  (-60:\outrad);
\draw [rotate=180, very thick] (-120:\outrad) to  (-60:\outrad);
\draw [rotate=-60, very thick] (-120:\outrad) to  (-60:\outrad);
\draw [rotate=-120, name path=left] (-120:\outrad) to  (-60:\outrad);
\draw [thick, dotted] (150:0.866) -- node [below]  {\scriptsize$\lambda$} (30:0.866);
\node [below right=-2pt] at (-60:\outrad) {\scriptsize$2$};
\node [below left=-2pt] at (-120:\outrad) {\scriptsize$2$};
\node [right=-1pt] at (0:\outrad) {\scriptsize$2$};
\node [above right=-2pt] at (60:\outrad) {\scriptsize$2$};
\node [above left=-2pt] at (120:\outrad) {\scriptsize$2$};
\node [left=-1pt] at (180:\outrad) {\scriptsize$2$};
\node [below right=-2pt] at (-30:0.866\outrad) {\scriptsize$2$};
\node [above=-1pt] at (90:0.866\outrad) {\scriptsize$2$};
\node [below left=-2pt] at (-150:0.866\outrad) {\scriptsize$2$};
\node [above right=-2pt] at (30:0.866\outrad) {\scriptsize$\partial_0$};
\node [above left=-2pt] at (150:0.866\outrad) {\scriptsize$\partial_0$};
\node [below=-1pt] at (-90:0.866\outrad) {\scriptsize$\partial_1$};
\end{tikzpicture}
\caption{}
\end{subfigure}
\hfil
%~
%and
%~
%%%%%%%%%%%%%%%%%%%%%%%%%%%%8
\begin{subfigure}[b]{0.23\textwidth}%%%fig 9c
\centering
\begin{tikzpicture}
\newcommand{\outrad}{1cm}
\draw [rotate=0, dashed] (-120:\outrad) to  (-60:\outrad); %bottom
\draw [rotate=60, very thick] (-120:\outrad) to  (-60:\outrad);
\draw [rotate=120, name path=right, dashed] (-120:\outrad) to  (-60:\outrad);
\draw [rotate=180, very thick] (-120:\outrad) to  (-60:\outrad);
\draw [rotate=-60, very thick] (-120:\outrad) to  (-60:\outrad);
\draw [rotate=-120, name path=left] (-120:\outrad) to  (-60:\outrad);
%\path [name path=con] (-1, 0.5) -- (1,0.5); %invisible, for calculating intersection
%\path [red, name intersections={of=right and con, by={B}}] (B) circle (2pt);%invisible
\draw [thick, dotted] (150:0.866) -- node [below]  {\scriptsize$\lambda$} (-30:0.866);
\node [below right=-2pt] at (-60:\outrad) {\scriptsize$2$};
\node [below left=-2pt] at (-120:\outrad) {\scriptsize$2$};
\node [right=-1pt] at (0:\outrad) {\scriptsize$2$};
\node [above right=-2pt] at (60:\outrad) {\scriptsize$2$};
\node [above left=-2pt] at (120:\outrad) {\scriptsize$2$};
\node [left=-1pt] at (180:\outrad) {\scriptsize$2$};
\node [below right=-2pt] at (-30:0.866\outrad) {\scriptsize$2$};
\node [above=-1pt] at (90:0.866\outrad) {\scriptsize$2$};
\node [below left=-2pt] at (-150:0.866\outrad) {\scriptsize$2$};
\node [above right=-2pt] at (30:0.866\outrad) {\scriptsize$\partial_1$};
\node [above left=-2pt] at (150:0.866\outrad) {\scriptsize$\partial_0$};
\node [below=-1pt] at (-90:0.866\outrad) {\scriptsize$\partial_1$};
\end{tikzpicture}
\caption{}
\end{subfigure}
%main caption
\caption{Cases with $\partial_{1}X\neq\varnothing$}%
\label{fignew4}%
\end{figure}

Thus when $\chi(X)<0$, we have fourteen orbifolds with an isolated essential
simple arc, of which the six shown in Figure \ref{fignew3} have $\partial
_{1}X$ empty. In these six cases, the group $G$ (in Theorem \ref{thm5.8}) is
$VPCn$--by--$\pi_{1}^{orb}(X)$.

Finally we can show that the family $\mathcal{S}_{n,n+1}(G)$ has a regular
neighbourhood which is a refinement $\Sigma_{n,n+1}(G)$ of $\Gamma_{n,n+1}%
(G)$. Every element of $\mathcal{S}_{n,n+1}(G)$ determines a simple closed
curve or simple arc in the base $2$--orbifold of one of the $V_{0}$--vertices
of $\Gamma_{n,n+1}(G)$. Now a connected compact $2$--orbifold is filled by
simple closed curves and simple arcs, unless it is one of the exceptional
cases listed above. Thus cutting the base $2$--orbifold along the exceptional
arc yields $2$--orbifolds which contain no isolated essential simple arc.
However, in several cases the $2$--orbifolds obtained by cutting along an
isolated arc contain non-isolated essential simple arcs. Now splitting an
exceptional $V_{0}$--vertex $v$ along the exceptional annulus yields a vertex
or vertices with base orbifold obtained by cutting $X_{v}$ along the isolated
essential arc. These new vertices enclose elements of $\mathcal{S}_{n,n+1}(G)$
which correspond to simple arcs in the new base orbifolds. Thus these new
vertices enclose elements of $\mathcal{S}_{n,n+1}(G)$ other than edge
splittings of $\Gamma_{n,n+1}(G)$ if and only if the new base orbifold
contains essential simple arcs.

Using the above notation, we can now describe the regular neighbourhood
$\Sigma_{n,n+1}(G)$ of $\mathcal{S}_{n,n+1}(G)$. It is a refinement of
$\Gamma_{n,n+1}(G)$ which can be obtained essentially by splitting each
exceptional $V_{0}$--vertex of $\Gamma_{n,n+1}(G)$ along the exceptional
annulus it contains. Each non-exceptional $V_{0}$--vertex of $\Gamma
_{n,n+1}(G)$ yields unchanged a $V_{0}$--vertex of $\Sigma_{n,n+1}(G)$, and
each $V_{1}$--vertex of $\Gamma_{n,n+1}(G)$ yields unchanged a $V_{1}$--vertex
of $\Sigma_{n,n+1}(G)$. If $v$ is an exceptional $V_{0}$--vertex of
$\Gamma_{n,n+1}(G)$, which contains a separating exceptional annulus, then $v$
is split into two new vertices. If $v$ is an exceptional $V_{0}$--vertex of
$\Gamma_{n,n+1}(G)$, which contains a non-separating exceptional annulus, then
$v$ is split into a single new vertex. If a new vertex encloses elements of
$\mathcal{S}_{n,n+1}(G)$ other than edge splittings of $\Gamma_{n,n+1}(G)$ we
label it as a $V_{0}$--vertex. Otherwise, we label it as a $V_{1}$--vertex.
This yields a refinement of $\Gamma_{n,n+1}(G)$, but it may not be bipartite.
By adding an isolated $V_{0}$--vertex between adjacent $V_{1}$--vertices, and
an isolated $V_{1}$--vertex between adjacent $V_{0}$--vertices, and then
reducing if needed, we can create a bipartite graph of groups which will be
the regular neighbourhood $\Sigma_{n,n+1}(G)$ of $\mathcal{S}_{n,n+1}(G)$. We
have shown the following result.

\begin{theorem}
\label{thm5.8} Let $(G,\partial G)$ be an orientable $PD(n+2)$ pair such that
$G$ is not $VPC$, and let $\mathcal{S}_{n,n+1}(G)$ denote the family of all
a.i. subsets of $G$ which are dual to splittings of $G$ over annuli or tori in
$(G,\partial G)$. Then the regular neighbourhood $\Sigma_{n,n+1}(G)$ of
$\mathcal{S}_{n,n+1}(G)$ in $G$ exists and is obtained from $\Gamma
_{n,n+1}(G)$ by splitting each exceptional $V_{0}$--vertex along the
exceptional annulus it contains, as described above.
\end{theorem}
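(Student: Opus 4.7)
The plan is to assemble the preceding lemmas into a construction of $\Sigma_{n,n+1}(G)$ and then verify that what we have built really does enjoy the defining properties of a regular neighbourhood of $\mathcal{S}_{n,n+1}(G)$. First I would recall, via Lemma~\ref{alphaisenclosedbycommensurisertypevertex} and Lemma~\ref{bijectionbetweencurvesandtori}, that each element of $\mathcal{S}_{n,n+1}(G)$ which is not an edge splitting of $\Gamma_{n,n+1}(G)$ is enclosed by a unique $V_0$--vertex $v$ of commensuriser type, and corresponds bijectively either to an essential ``simple closed curve'' in $X_v$, or to an essential ``simple arc'' in $(X_v,\partial_0 X_v)$. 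Two such elements cross if and only if the corresponding curves/arcs cross in $X_v$. In particular the isolated elements of $\mathcal{S}_{n,n+1}(G)$ other than edge splittings of $\Gamma_{n,n+1}(G)$ correspond to isolated essential simple arcs, and the preceding classification (Figures \ref{fig1}, \ref{fignew3}, \ref{fignew4}) shows that only finitely many exceptional $V_0$--vertices exist and each carries at most one such arc up to isotopy.

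Next I would perform the splitting construction explicitly. For each exceptional $V_0$--vertex $v$ of $\Gamma_{n,n+1}(G)$, let $\lambda_v$ denote the unique isolated essential arc in $(X_v,\partial_0 X_v)$, and let $A_v$ be the corresponding exceptional annulus dual to an edge splitting of $G(v)$ (extended trivially to a splitting of $G$). Split $v$ along $A_v$: this replaces $v$ by one or two new vertices (according as $\lambda_v$ is non-separating or separating in $X_v$), whose base orbifolds are the components of $X_v\setminus\lambda_v$. Label each new vertex as $V_0$ if its base orbifold still contains an essential simple arc or simple closed curve, and as $V_1$ otherwise; the classification of Figures \ref{fignew3}--\ref{fignew4} tells us exactly when each case occurs. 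Then insert isolated vertices between adjacent same-type vertices to restore the bipartite structure, and collapse any edges that would violate reducedness, following the convention already used for $\Gamma_{n,n+1}^c$.

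The main verification is that this graph of groups $\Sigma_{n,n+1}(G)$ really is a regular neighbourhood of $\mathcal{S}_{n,n+1}(G)$. For enclosure: any element $\alpha\in\mathcal{S}_{n,n+1}(G)$ is enclosed by some $V_0$--vertex of $\Gamma_{n,n+1}(G)$; if the enclosing vertex is non-exceptional, $\alpha$ remains enclosed by the corresponding unaltered $V_0$--vertex of $\Sigma_{n,n+1}(G)$, and if it is exceptional then the arc or curve representing $\alpha$ in $X_v$ either is $\lambda_v$ itself, in which case $\alpha$ becomes an edge splitting, or it is disjoint from $\lambda_v$, in which case it descends to an essential arc/curve in one of the components of $X_v\setminus\lambda_v$ and is enclosed by the corresponding new $V_0$--vertex. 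For the edge-splitting property: each edge splitting of $\Sigma_{n,n+1}(G)$ is either an edge splitting of $\Gamma_{n,n+1}(G)$ or is dual to an exceptional annulus $A_v$, and in both cases it is isolated in $\mathcal{S}_{n,n+1}(G)$ by Lemma~\ref{alphaisenclosedbycommensurisertypevertex} combined with the observation that $\lambda_v$ by definition crosses no essential arc or closed curve in $X_v$.

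The main obstacle I anticipate is the bookkeeping in the last step: one must check that after cutting $X_v$ along $\lambda_v$, an element of $\mathcal{S}_{n,n+1}(G)$ originally represented by a curve or arc in $X_v$ meeting $\lambda_v$ transversely cannot occur, which follows from the isolation of $\lambda_v$, but one must still verify that the new vertices correctly separate the enclosed elements and that the labelling $V_0$/$V_1$ is forced by the orbifold classification above (so that in each final $V_0$--vertex of $\Sigma_{n,n+1}(G)$ there is a genuine simple arc or curve to enclose, and in each $V_1$--vertex there is not). The rest, including the fact that $\Sigma_{n,n+1}(G)$ is again bipartite, reduced, and compatible with $\Gamma_{n,n+1}(G)$ as a refinement, follows formally from the construction and from the algebraic regular neighbourhood theory of \cite{SS03}.
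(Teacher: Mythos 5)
Your proposal is correct and follows essentially the same route as the paper: invoking Lemmas~\ref{alphaisenclosedbycommensurisertypevertex} and \ref{bijectionbetweencurvesandtori} to translate elements of $\mathcal{S}_{n,n+1}(G)$ into curves and arcs in the base $2$--orbifolds, using the classification of isolated arcs to identify the exceptional $V_0$--vertices, splitting each such vertex along its exceptional annulus, relabelling the resulting vertices as $V_0$ or $V_1$ according to whether they still enclose non-edge elements of $\mathcal{S}_{n,n+1}(G)$, and finally inserting isolated vertices and reducing to restore a bipartite, reduced graph. You make the verification of the enclosing and edge-isolation properties somewhat more explicit than the paper (which largely leaves them to follow from the preceding lemmas and the regular neighbourhood theory of \cite{SS03}), but the underlying argument is the same.
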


\begin{remark}
It follows from this theorem that if $\partial G$ is empty, so that $G$ is an
orientable $PD(n+2)$ group, then the regular neighbourhood $\Sigma_{n,n+1}(G)$
of $\mathcal{S}_{n,n+1}(G)$ in $G$ exists and is equal to $\Gamma
_{n,n+1}(G)=\Gamma_{n+1}(G)$.
\end{remark}

In Figures \ref{fig1}, \ref{fignew3} and \ref{fignew4}, we drew only the
essential isolated arcs. In Figure \ref{fignew5} we draw some examples with
other possible essential arcs to illustrate that they do not lead to isolated
arcs. We point out the corresponding figures from the text, but omit the labels.

\begin{figure}[ptb]
%fig new5
\centering
%%%%%%%%%%%%%%%%%%%%%%%%%%%%%%%%%1
\begin{subfigure}[b]{0.23\textwidth}%%%fig 10a
\centering
\begin{tikzpicture}
\newcommand{\outrad}{1}
\newcommand{\inrad}{0.2}
\draw (0,0) circle (\inrad);
\draw (0,0) circle (\outrad);
%\node [above ] at (45:\outrad) {\scriptsize$\partial_0$};
%\node [below ] at (80:\inrad) {\scriptsize$\partial_0$};
\draw[fill] (-0.7\outrad,0) circle (1pt);
%\node [below] at (-0.7\outrad,0) {\scriptsize $p$};
\draw [thick, dotted] (90:\inrad) -- node [right=-1pt] {\scriptsize$\lambda$} (90:\outrad);
\draw [thick, dotted] (140:\outrad) to [out=-50, in=50] (-140:\outrad);
\draw [thick, dotted] (135:\inrad) to [out=135, in=-135, looseness=12](-135:\inrad);
\end{tikzpicture}
\caption{For Figure \ref{fignew3}b)}
\end{subfigure}
%%%%%%%%%%%%%%%%%%%%%%%%%%%%%%%%%2
\hfil
\begin{subfigure}[b]{0.23\textwidth}%%%fig 10b
\centering
\begin{tikzpicture}
\newcommand{\outrad}{1}
\newcommand{\inrad}{0.2}
\draw [name path=ellipse] (0,0) circle [x radius=1.5\outrad, y radius=\outrad];
%\node at (45:1.4\outrad) {\scriptsize $\partial$};
\draw (-0.6,0) circle (\inrad); % left inner circle
%\node [below right] at (-0.6,-\inrad) {\scriptsize $\partial$};
\draw (0.6,0) circle (\inrad); % right inner circle
%\node [above left] at (0.6,\inrad) {\scriptsize $\partial$};
\draw [thick, dotted] (90:\outrad) -- (-90:\outrad);
\draw [thick, dotted] (90:\outrad) -- (-90:\outrad);
\path [name path=con] (-0.6,0) -- (-0.6,\outrad); %contruction path
\draw [thick, dotted,  name intersections={of=ellipse and con}] (-0.6,\inrad) --  (intersection-1); %vertical dotted line
\draw  [thick, dotted] (-0.6,0) + (45:\inrad) to [out=45,in=90, looseness=1.5] (1,0); %top halve of loop
\draw  [thick, dotted] (-0.6,0) + (-45:\inrad) to [out=-45,in=-90, looseness=1.5] (1,0); %bottom half of loop
\draw  [thick, dotted, rotate=180] (-0.6,0) + (45:\inrad) to [out=45,in=90, looseness=1.5] (1,0); %
\draw  [thick, dotted, rotate=180] (-0.6,0) + (-45:\inrad) to [out=-45,in=-90, looseness=1.5] (1,0); %
\draw  [thick, dotted] (-0.6,0) + (0:\inrad) -- ($(0.6,0) + (180:\inrad)$);
\end{tikzpicture}
\caption{For Figure \ref{fignew2}c)}
\end{subfigure}
%%%%%%%%%%%%%%%%%%%%%%%%%%%%%%%%%3
\hfil
\begin{subfigure}[b]{0.23\textwidth}%%%fig 10c
\centering
\begin{tikzpicture}
\newcommand{\outrad}{1}
\newcommand{\inrad}{0.2}
\draw [name path=ellipse] (0,0) circle [x radius=1.5\outrad, y radius=\outrad];
%\node at (45:1.4\outrad) {\scriptsize $\partial$};
\draw (-0.6,0) circle (\inrad); % left inner circle
%\node [below right] at (-0.6,-\inrad) {\scriptsize $\partial$};
\draw [dashed] (0.6,0) circle (\inrad); % right inner circle
%\node [above left] at (0.6,\inrad) {\scriptsize $\partial$};
\draw [thick, dotted] (90:\outrad) -- (-90:\outrad);
\path [name path=con] (-0.6,0) -- (-0.6,\outrad); %contruction path
\draw [thick, dotted,  name intersections={of=ellipse and con}] (-0.6,\inrad) --  node [left=-1pt] {\scriptsize$\lambda$} (intersection-1); %vertical dotted line
\draw  [thick, dotted] (-0.6,0) + (45:\inrad) to [out=45,in=90, looseness=1.5] (1,0); %top halve of loop
\draw  [thick, dotted] (-0.6,0) + (-45:\inrad) to [out=-45,in=-90, looseness=1.5] (1,0); %bottom half of loop
\end{tikzpicture}
\caption{For Figure \ref{fignew4}b)}
\end{subfigure}
%%%%%%%%%%%%%%%%%%%%%%%%%%%%%%%%%4
\hfil
\begin{subfigure}[b]{0.23\textwidth}%%%fig 10d
\centering
\begin{tikzpicture}
\newcommand{\outrad}{1}
\newcommand{\inrad}{0.2}
\draw [very thick] (\outrad,0) arc [start angle=0, end angle=180, radius=\outrad];
\draw [name path=lower] (\outrad,0) arc [start angle=0, end angle=-180, radius=\outrad]; % lower semicircle
\draw [] (0,0) circle (\inrad); % inner circle
\path [name path=con] (-1,-0.3) -- (1,-0.3); %invisible, for calculating intersection
\draw[thick, dotted,  name intersections={of=lower and con}] (intersection-1) to [out=110, in=0]  (0,0.6) to [out=180, in=70] (intersection-2); %curved dotted line
\draw[thick, dotted] (0,\inrad) -- (0,\outrad);
\draw[thick, dotted] (0,-\inrad) -- node [left=-1pt] {\scriptsize$\lambda$} (0,-\outrad);
\end{tikzpicture}
\caption{For Figure \ref{fignew3}d)}
\end{subfigure}
\\[4mm]
%% <------- was a bit cramped without this hardwired
%%%%%%%%%%%%%%%%%%%%%%%%%%%%%%%%%5
\hfil
\begin{subfigure}[b]{0.23\textwidth}%%%fig 10e
\centering
\begin{tikzpicture}[rotate=-72] % pentagon
\newcommand{\outrad}{1}
\newcommand{\inrad}{0.4}
\begin{scope}[rotate=0]  %  side with no dotted line
\draw [name path=bottom, rotate=0] (-126:\outrad) to [out=20, in=160]  (-54:\outrad);
\end{scope}
\begin{scope}[rotate=72]  %  side with parallel dotted line
\draw [name path=bottom, rotate=0, very thick] (-126:\outrad) to [out=20, in=160]  (-54:\outrad);
\path [rotate=-72, name path=left] (-126:\outrad) to [out=20, in=160] (-54:\outrad);%invisible, for calculating intersection
\path [rotate=72, name path=right] (-126:\outrad) to [out=20, in=160] (-54:\outrad);%invisible, for calculating intersection
\path [name path=con, red] (-160:\outrad)  -- (-20:\outrad); %invisible, for calculating intersection
\path [red, name intersections={of=left and con, by={A}}] (A) circle (2pt);%invisible, for calculating intersection
\path [red, name intersections={of=right and con, by={B}}] (B) circle (2pt);%invisible, for calculating intersection
\draw [thick, dotted] (A) to [out=20, in=160] node [below=-1pt] {\scriptsize$\lambda$}  (B); % dotted
\end{scope}
\begin{scope}[rotate=-72]  %  side with parallel dotted line
\draw [name path=bottom, rotate=0, very thick] (-126:\outrad) to [out=20, in=160]  (-54:\outrad);
\path [rotate=-72, name path=left] (-126:\outrad) to [out=20, in=160] (-54:\outrad);%invisible, for calculating intersection
\path [rotate=72, name path=right] (-126:\outrad) to [out=20, in=160] (-54:\outrad);%invisible, for calculating intersection
\path [name path=con, red] (-160:\outrad)  -- (-20:\outrad); %invisible, for calculating intersection
\path [red, name intersections={of=left and con, by={A}}] (A) circle (2pt);%invisible, for calculating intersection
\path [red, name intersections={of=right and con, by={B}}] (B) circle (2pt);%invisible, for calculating intersection
\draw [thick, dotted] (A) to [out=20, in=160]  (B); % dotted
\end{scope}
\begin{scope}[rotate=-144]  %  side with parallel dotted line
\draw [name path=bottom, rotate=0, very thick] (-126:\outrad) to [out=20, in=160]  (-54:\outrad);
\path [rotate=-72, name path=left] (-126:\outrad) to [out=20, in=160] (-54:\outrad);%invisible, for calculating intersection
\path [rotate=72, name path=right] (-126:\outrad) to [out=20, in=160] (-54:\outrad);%invisible, for calculating intersection
\path [name path=con, red] (-160:\outrad)  -- (-20:\outrad); %invisible, for calculating intersection
\path [red, name intersections={of=left and con, by={A}}] (A) circle (2pt);%invisible, for calculating intersection
\path [red, name intersections={of=right and con, by={B}}] (B) circle (2pt);%invisible, for calculating intersection
\draw [thick, dotted] (A) to [out=20, in=160]  (B); % dotted
\end{scope}
\begin{scope}[rotate=144]  %  side with no dotted line
\draw [name path=bottom, rotate=0] (-126:\outrad) to [out=20, in=160]  (-54:\outrad);
\end{scope}
\end{tikzpicture}
\caption{For Figure \ref{fignew2}g)}
\end{subfigure}
%%%%%%%%%%%%%%%%%%%%%%%%%%%%%%%%%
\hfil
\begin{subfigure}[b]{0.23\textwidth}%%%fig 10f
\centering
\begin{tikzpicture} % hexagon
\newcommand{\outrad}{1}
\newcommand{\inrad}{0.4}
\begin{scope}[rotate=60]  %  side with parallel dotted line
\draw [name path=bottom, rotate=0, very thick] (-120:\outrad) to   (-60:\outrad);
\path [rotate=-60, name path=left] (-120:\outrad) to  (-60:\outrad);%invisible, for calculating intersection
\path [rotate=60, name path=right] (-120:\outrad) to  (-60:\outrad);%invisible, for calculating intersection
\path [name path=con, red] (-140:\outrad)  -- (-40:\outrad); %invisible, for calculating intersection
\path [red, name intersections={of=left and con, by={A}}] (A) circle (2pt);%invisible, for calculating intersection
\path [red, name intersections={of=right and con, by={B}}] (B) circle (2pt);%invisible, for calculating intersection
\draw [thick, dotted] (A) to   (B); % dotted
\end{scope}
\begin{scope}[rotate=-60]  %  side with parallel dotted line
\draw [name path=bottom, rotate=0, very thick] (-120:\outrad) to   (-60:\outrad);
\path [rotate=-60, name path=left] (-120:\outrad) to  (-60:\outrad);%invisible, for calculating intersection
\path [rotate=60, name path=right] (-120:\outrad) to  (-60:\outrad);%invisible, for calculating intersection
\path [name path=con, red] (-140:\outrad)  -- (-40:\outrad); %invisible, for calculating intersection
\path [red, name intersections={of=left and con, by={A}}] (A) circle (2pt);%invisible, for calculating intersection
\path [red, name intersections={of=right and con, by={B}}] (B) circle (2pt);%invisible, for calculating intersection
\draw [thick, dotted] (A) to   (B); % dotted
\end{scope}
\begin{scope}[rotate=180]  %  side with parallel dotted line
\draw [name path=bottom, rotate=0, very thick] (-120:\outrad) to   (-60:\outrad);
\path [rotate=-60, name path=left] (-120:\outrad) to  (-60:\outrad);%invisible, for calculating intersection
\path [rotate=60, name path=right] (-120:\outrad) to  (-60:\outrad);%invisible, for calculating intersection
\path [name path=con, red] (-140:\outrad)  -- (-40:\outrad); %invisible, for calculating intersection
\path [red, name intersections={of=left and con, by={A}}] (A) circle (2pt);%invisible, for calculating intersection
\path [red, name intersections={of=right and con, by={B}}] (B) circle (2pt);%invisible, for calculating intersection
\draw [thick, dotted] (A) to   (B); % dotted
\end{scope}
\begin{scope}[rotate=0]  %  side
\draw [name path=bottom, rotate=0] (-120:\outrad) to   (-60:\outrad);
\end{scope}
\begin{scope}[rotate=120]  %  side
\draw [name path=bottom, rotate=0] (-120:\outrad) to   (-60:\outrad);
\end{scope}
\begin{scope}[rotate=-120]  %  side
\draw [name path=bottom, rotate=0] (-120:\outrad) to   (-60:\outrad);
\end{scope}
\draw [thick, dotted] (90:0.866\outrad) -- (-90:0.866\outrad);
\draw [thick, dotted, rotate=60] (90:0.866\outrad) -- (-90:0.866\outrad);
\draw [thick, dotted, rotate=-60] (90:0.866\outrad) -- (-90:0.866\outrad);
\end{tikzpicture}
\caption{For Figure \ref{fignew2}h)}
\end{subfigure}
%%%%%%%%%%%%%%%%%%%%%%%%%%%%%%%%%
\hfil
\begin{subfigure}[b]{0.23\textwidth}%%%fig 10g
\centering
\begin{tikzpicture} % hexagon
\newcommand{\outrad}{1}
\newcommand{\inrad}{0.4}
\begin{scope}[rotate=60]  %  side with parallel dotted line
\draw [name path=bottom, rotate=0, very thick] (-120:\outrad) to   (-60:\outrad);
\end{scope}
\begin{scope}[rotate=-60]  %  side
\draw [name path=bottom, rotate=0, very thick] (-120:\outrad) to   (-60:\outrad);
\end{scope}
\begin{scope}[rotate=180]  %  side with parallel dotted line
\draw [name path=bottom, rotate=0, very thick] (-120:\outrad) to   (-60:\outrad);
\path [rotate=-60, name path=left] (-120:\outrad) to  (-60:\outrad);%invisible, for calculating intersection
\path [rotate=60, name path=right] (-120:\outrad) to  (-60:\outrad);%invisible, for calculating intersection
\path [name path=con, red] (-140:\outrad)  -- (-40:\outrad); %invisible, for calculating intersection
\path [red, name intersections={of=left and con, by={A}}] (A) circle (2pt);%invisible, for calculating intersection
\path [red, name intersections={of=right and con, by={B}}] (B) circle (2pt);%invisible, for calculating intersection
\draw [thick, dotted] (A) to node [below=-2pt] {\scriptsize $\lambda $}  (B); % dotted
\end{scope}
\begin{scope}[rotate=0]  %  side
\draw [name path=bottom, rotate=0,dashed] (-120:\outrad) to   (-60:\outrad);
\end{scope}
\begin{scope}[rotate=120]  %  side
\draw [name path=bottom, rotate=0] (-120:\outrad) to   (-60:\outrad);
\end{scope}
\begin{scope}[rotate=-120]  %  side
\draw [name path=bottom, rotate=0] (-120:\outrad) to   (-60:\outrad);
\end{scope}
%
%\draw [thick, dotted] (90:0.866\outrad) -- (-90:0.866\outrad);
\draw [thick, dotted, rotate=60] (90:0.866\outrad) -- (-90:0.866\outrad);
\draw [thick, dotted, rotate=-60] (90:0.866\outrad) -- (-90:0.866\outrad);
\end{tikzpicture}
\caption{For Figure \ref{fignew4}g)}
\end{subfigure}
\caption{Note that $\partial_{1}=\emptyset$ in all cases except (c) and (g)}%
\label{fignew5}%
\end{figure}

\section{Results in dimension $3\label{section:dimension3}$}

In this section, we consider the special case of $PD3$ pairs and compare our
results in this case with the results of Neumann and Swarup in \cite{NS}. In
the previous section, a key role was played by the classification of compact
$2$--orbifolds with certain properties. In the case when $n=1$, so that we are
considering $PD3$ pairs, the following result greatly reduces the number of
cases which need considering.

\begin{lemma}
\label{nomirrors}Let $(G,\partial G)$ be an orientable $PD3$ pair, let $v$ be
a $V_{0}$--vertex of $\Gamma_{1,2}(G)$ which is of Seifert type or of
commensuriser type, and let $X$ denote the base $2$--orbifold of $v$. Then $X$
has no mirrors.
\end{lemma}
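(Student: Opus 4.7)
The plan is to argue by contradiction. Suppose $X$ has a mirror. Then $Q:=\pi_{1}^{orb}(X)$ contains a reflection $r$ of order two, and $r$ reverses the orientation of $X$. In the $n=1$ case, the fact that $v$ is of Seifert type or of commensuriser type means that $G(v)$ fits into a short exact sequence $1\to K\to G(v)\to Q\to 1$ with $K\cong\mathbb{Z}$ (the fibre). Since $K$ is abelian and normal, conjugation in $G(v)$ gives a homomorphism $\omega: Q\to \mathrm{Aut}(K)=\{\pm 1\}$. The strategy is to show $\omega(r)=-1$ and then note that the preimage $E$ of $\langle r\rangle$ in $G(v)$ must be the infinite dihedral group, contradicting torsion-freeness of $G$.

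The first (and main) step is the identification $\omega=w_{X}\circ p$, where $p:G(v)\to Q$ is the projection and $w_{X}:Q\to\{\pm 1\}$ is the orientation character of the $2$-orbifold $X$. This is the algebraic version of the standard fact that, for a Seifert fibration, orientability of the total space equals the product of the orientation characters of the base and of the fibre. Concretely, the orientation character $w$ of the ambient $PD3$ pair $(G,\partial G)$, restricted to $G(v)$, decomposes as the product $\omega\cdot(w_{X}\circ p)$ (the "tangent bundle" of $G(v)$ splits as fibre-tangent tensored with pullback of base-tangent). Since $G$ is orientable, $w\equiv 1$, so $\omega=w_{X}\circ p$. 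As $r$ is a reflection, $w_{X}(r)=-1$, and hence $\omega(r)=-1$.

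The remaining step is routine. The preimage $E$ of $\langle r\rangle$ in $G(v)$ fits in an extension
\[
1\to\mathbb{Z}\to E\to\mathbb{Z}/2\to 1
\]
in which $\mathbb{Z}/2$ acts on $\mathbb{Z}$ by inversion. Such extensions are classified by $H^{2}(\mathbb{Z}/2;\mathbb{Z}_{-})=0$, so there is only one: the split semidirect product $\mathbb{Z}\rtimes\mathbb{Z}/2$, i.e.\ the infinite dihedral group $\mathbb{Z}/2\ast\mathbb{Z}/2$. Any lift of $r$ therefore has order $2$, giving torsion in $G(v)\subset G$. But $G$ is a $PD3$ group (or $PD3$ pair), hence torsion-free, a contradiction. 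Therefore $X$ has no mirrors.

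The main obstacle is justifying the splitting formula $w|_{G(v)}=\omega\cdot(w_{X}\circ p)$ purely in the algebraic setting of $PD$-pairs. This can be done either by invoking a $K(\pi,1)$-realization of $G(v)$ as an orientable Seifert fibred $3$-manifold (using that a $VPC1$-by-Fuchsian group admits such a realization) and then quoting the classical topological fact, or intrinsically from the Lyndon--Hochschild--Serre spectral sequence applied to $1\to K\to G(v)\to Q\to 1$ together with the structure theory of $VPC1$-by-$\pi_{1}^{orb}(X)$ vertex groups already developed in \cite{SS05}; everything else in the argument is formal.
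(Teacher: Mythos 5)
Your proposal takes a genuinely different route from the paper, reducing the whole lemma to the single formula $w_{G}|_{G(v)}=\omega\cdot(w_{X}\circ p)$, and you correctly flag that formula as the main obstacle --- but that obstacle \emph{is} the content of the lemma and is not overcome. In the algebraic $PD$--pair setting the formula is really a statement about the orientation structure of the Poincar\'{e} \emph{triad} $(G(v);\partial_{0}v,\partial_{1}v)$, and the paper points out at the end of Section~2 that this theory has not been worked out for groups. Neither of your two suggested fixes closes the gap as written: the $3$--manifold--realization route needs the realization of $G(v)$ (or of $DG(v)$) to be orientable \emph{and} to carry a Seifert fibration matching the given algebraic extension $1\to K\to G(v)\to Q\to1$, which is non--trivial and close to circular, since orientability of the realization is tightly linked to what you want to prove; the LHS--spectral--sequence route is not carried out at all.

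This is not a cosmetic gap. In the circle--mirror case there is a perfectly consistent local extension $1\to K\to S\to\mathbb{Z}\times\mathbb{Z}_{2}\to1$ with $S\cong\pi_{1}(\text{Klein bottle})=\langle a,b:bab^{-1}=a^{-1}\rangle$, $K=\langle a^{2}\rangle$, and the reflection $r$ the image of $a$; here $\omega(r)=+1$, the preimage of $\langle r\rangle$ is $\langle a\rangle\cong\mathbb{Z}$, torsion--free, and your argument produces no contradiction. The paper rules out precisely this configuration by a different mechanism: the preimage $R$ of $\pi_{1}(C)$ is a torus and is maximal among torus subgroups by Lemma~2.2 of \cite{KR2}, forcing $S\cong\pi_{1}(\text{Klein bottle})$; then $R$ must be the unique index--$2$ torus subgroup $\langle a,b^{2}\rangle$, giving $R/K\cong\mathbb{Z}\times\mathbb{Z}_{2}\neq\mathbb{Z}\cong\pi_{1}(C)$, a contradiction. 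More generally the paper's proof is an elementary three--case analysis (mirror meeting $\partial X$; corner reflector; circle mirror) that uses only the classification of torsion--free $VPC1$ and $VPC2$ groups, the maximal--torus lemma, and the fact that a splitting of $G$ adapted to $\partial G$ is over an orientable group, with no appeal to the unproved orientation--character formula. To rescue your unified approach you would have to actually prove the formula; it is instructive that the paper's circle--mirror case is essentially a by--hand verification of its consequence in the only case where it does real work.
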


\begin{remark}
Note that we are allowing $G$ to be $VPC$ in the above statement. This result
means that when $n=1$, only Figures \ref{fig1}a), \ref{fig1}d)-f),
\ref{fignew3}a),b) and \ref{fignew4}a)-c) are relevant to the results in this
section. It also means that each $V_{0}$--vertex of $\Gamma_{1,2}(G)$ can be
regarded as a Seifert fibre space or an $I$--bundle.
\end{remark}

\begin{proof}
Recall that $G(v)$ is $VPC1$--by--$\pi_{1}^{orb}(X)$, and that $G$ is torsion
free. Now a $VPC1$ group is a finite extension of $\mathbb{Z}$, so a torsion
free $VPC1$ group is $PD1$ and must also be isomorphic to $\mathbb{Z}$. And a
$VPC2$ group is a finite extension of $\mathbb{Z}\times\mathbb{Z}$, so a
torsion free $VPC2$ group is $PD2$ and must be isomorphic to $\mathbb{Z}%
\times\mathbb{Z}$ or to $\pi_{1}(K)$, where $K$ denotes the Klein bottle. In
particular, a torus in a $PD3$ pair must be isomorphic to $\mathbb{Z}%
\times\mathbb{Z}$.

If $X$ contains a mirror, there are three possibilities. The mirror must be a
circle, or meet $\partial X$, or meet another mirror (or itself) in a corner
reflector point. We will show that each of these cases is impossible, which
implies that $X$ has no mirrors, as required.

First we consider a component $C$ of $\partial X$, which must be a circle or
the quotient $Q$ of a circle by a reflection. We know that $\pi_{1}^{orb}(C)$
is the image of a torus in $\partial M_{v}$. As $\pi_{1}^{orb}(Q)\cong%
\mathbb{Z}_{2}\ast\mathbb{Z}_{2}$, which is not abelian, $\pi_{1}(Q)$ cannot
be a quotient of the abelian group $\mathbb{Z}\times\mathbb{Z}$. It follows
that all components of $\partial X$ are circles. Hence no mirror of $X$ can
meet $\partial X$, as required.

Next suppose that $X$ has a corner reflector. This yields a finite dihedral
subgroup $D$ of $\pi_{1}^{orb}(X)$, where the term dihedral group includes the
group $\mathbb{Z}_{2}\times\mathbb{Z}_{2}$. In particular $D$ is not cyclic.
But the pre-image of $D$ in $G(v)$ is a torsion free $VPC1$ group and so is
isomorphic to $\mathbb{Z}$, which implies that $D$ must be cyclic. This
contradiction show that $X$ cannot have corner reflectors, as required.

Finally suppose that $X$ has a mirror $m$ which is a circle. Then $m$ has a
neighbourhood orbifold $Y$ in $X$ with underlying space an annulus, such that
$\partial Y$ is equal to one boundary component $C$ of the annulus and the
other boundary component is $m$. We have $\pi_{1}^{orb}(C)=\pi_{1}%
(C)\cong\mathbb{Z}$, and $\pi_{1}^{orb}(m)=\pi_{1}(C)\times\mathbb{Z}_{2}$,
and we let $R$ and $S$ denote the pre-images in $G(v)$ of $\pi_{1}(C)$ and
$\pi_{1}^{orb}(m)$ respectively. Thus $R$ is a subgroup of $S$ of index $2$.
Each of $R$ and $S$ is a torsion free $VPC2$ group. As $C$ determines a
splitting of $\pi_{1}^{orb}(X)$ over $\pi_{1}(C)$ which is adapted to
$\partial X$, this yields a splitting of $G(v)$ over $R$ which is adapted to
$\partial_{1}v$, and hence determines a splitting of $G$ over $R$ which is
adapted to $\partial G$. As the pair $(G,\partial G)$ is orientable, it
follows that $R$ is orientable, and so is a torus in $G$. Also the splitting
of $G\ $over $R$ yields one or two $PD3$ pairs with $R$ as a boundary group.
Now Lemma 2.2 of \cite{KR2} implies that $R$ is maximal among torus subgroups
of $G$, so that $S$ cannot be a torus. Thus $S$ is isomorphic to $\pi_{1}(K)$.
Now consider the presentation $<a,b:bab^{-1}=a^{-1}>$ of $\pi_{1}(K)$. The
kernel of the map $S\cong\pi_{1}(K)\rightarrow\pi_{1}^{orb}(m)\cong%
\mathbb{Z}\times\mathbb{Z}_{2}$ must be the subgroup $A$ generated by $a^{2}$,
and this must also be the kernel of the map $R\rightarrow\pi_{1}%
(C)\cong\mathbb{Z}$. As $R$ has index $2$ in $\pi_{1}(K)$, it must be the
orientation subgroup generated by $a$ and $b^{2}$. But then the quotient of
$R$ by $A$ is isomorphic to $\mathbb{Z}\times\mathbb{Z}_{2}$, which is a contradiction.

This completes the proof that $X$ has no mirrors.
\end{proof}

Now we can compare our results from section \ref{section:comparisons}, with
those of Neumann and Swarup in \cite{NS}. Recall that in section
\ref{section:comparisons}, we considered an orientable $PD(n+2)$ pair
$(G,\partial G)$ such that $G$ is not $VPC$ and described the possible
exceptional annuli. These are splittings of $G$ dual to an annulus which are
not edge splittings of $\Gamma_{n,n+1}^{c}(G)$. Each exceptional annulus is
enclosed by some $V_{0}$--vertex $v$ of $\Gamma_{n,n+1}^{c}(G)$ of
commensuriser type, and corresponds to an isolated arc $\lambda$ in the base
$2$--orbifold $X_{v}$ of $v$. In Figures \ref{fig1}a)-c), \ref{fignew3} and
\ref{fignew4} we showed all possible such arcs and orbifolds. We will be
interested in the special case when $n=1$, so that $(G,\partial G)$ is an
orientable $PD3$ pair. Lemma \ref{nomirrors} tells us that those figures in
which the orbifold $X$ has a mirror, are not relevant in this case. This
substantially reduces the number of possibilities. We only need to consider
the six isolated arcs shown in Figures \ref{fig1}a), \ref{fignew3}a),b) and
\ref{fignew4}a)-c).

Recall from the beginning of section \ref{section:prelim} that an annulus in a
$PD(n+2)$ pair is a certain type of orientable $PD(n+1)$ pair, whose
fundamental group is $VPCn$. When $n=1$, a torsion free $VPC1$ group must be
isomorphic to $\mathbb{Z}$, and this is an orientable $PD1$ group. Thus
twisted annuli do not appear when considering $PD3$ pairs, and an untwisted
annulus in our generalized sense is exactly the same as the ordinary annulus
$S^{1}\times I$. Further if our $PD3$ pair $(G,\partial G)$ comes from a
compact orientable $3$--manifold $M$, then there is a precise correspondence
between $\Gamma_{1,2}^{c}(G)$ and the JSJ decomposition of $M$. Also any
exceptional annuli in $(G,\partial G)$ correspond to embedded annuli in $M$
which cross no other embedded essential annulus in $M$ and are not splitting
annuli of the JSJ decomposition of $M$. In \cite{NS}, such annuli are called
matched annuli, and the possibilities are listed in Lemma 3.4 of \cite{NS}. We
would expect this list to be the same as our list of six possible isolated
arcs in Figures \ref{fig1}a), \ref{fignew3}a),b) and \ref{fignew4}a)-c), but
there are some differences. The four isolated arcs shown in Figures
\ref{fig1}a), \ref{fignew3}a), \ref{fignew4}a) and \ref{fignew4}c) yield the
examples of matched annuli shown in Figure 5 of \cite{NS}, but the two
isolated arcs shown in \ref{fignew3}b) and \ref{fignew4}b) do not correspond
to matched annuli shown in Figure 5 of \cite{NS}. In \ref{fignew3}b),
$\partial_{1}X$ is empty which implies that $G=G(v)$, and that $M$ is a
Seifert fibre space, so this case is not of much interest. But in Figure
\ref{fignew4}b), $\partial_{1}X$ is non-empty, so the isolated arc $\lambda$
in this figure corresponds to an interesting matched annulus in $M$. This
seems to be an omission in \cite{NS}. Figure \ref{fignew5}c) shows that the
orbifold $X$ in Figure \ref{fignew4}b) contains two essential arcs other than
$\lambda$, but they cross, so neither is isolated. Cutting along the vertical
one of the two crossing arcs in Figure \ref{fignew5}c) expresses $X$ as the
union of two orbifolds glued along a boundary arc. These are the first and
second orbifolds shown in Figure 1 of \cite{NS}. This is the unique case where
gluing two of the orbifolds shown in Figure 1 of \cite{NS} yields an orbifold
with no isolated essential arc. This possibility was omitted in the discussion
in the second paragraph on page 35 of \cite{NS}. Specifically the last
sentence of that paragraph is incorrect.

The result of Lemma \ref{nomirrors} fails in higher dimensions. Mirrors of all
three types discussed in the proof of Lemma \ref{nomirrors} can exist in all
dimensions greater than $3$. We discuss some examples in dimension $4$. Again
higher dimensional examples can be obtained by taking the product with circles.

Our starting point is that the orientable $3$--manifold $W$ which is a twisted
$I$--bundle over the Klein bottle $K$ is an example of a twisted
$3$--dimensional annulus, and the double $DW$ of $W$ is a $3$--dimensional
torus. Thus the orientable $4$--manifold $DW\times I$ is the underlying space
of a $PD4$ pair $(G,\partial G)$, and $\Gamma_{2,3}(G)$ consists of a single
$V_{0}$--vertex $v$, so $G=G(v)$, and $v$ is of $VPC2$--by--Fuchsian type with
base orbifold $Q\times I$, where $Q$ is the quotient of the circle by a
reflection. This orbifold has two mirrors each meeting the orbifold boundary
in reflector points. If instead one considers the manifold $DW\times S^{1}$,
one will have two mirrors each homeomorphic to a circle.

Finally, one can also give examples with corner reflectors as follows. A
useful way to think about $W$ is as the $I$--bundle over $K$ associated to the
$\partial I=S^{0}$--bundle given by the double covering map $T\rightarrow K$.
Note that this map is determined by a surjective homomorphism $\pi
_{1}(K)\rightarrow\mathbb{Z}_{2}$. One way to construct $W$ is to start with
the product $T\times I$ of the $2$--torus with the unit interval, and consider
the involution $(\tau,\sigma)$ on $T\times I$, where $\tau$ is the free
involution of $T\ $associated to the double covering map $T\rightarrow K$, and
$\sigma$ is the reflection of $I$. As $\tau$ is free, so is $(\tau,\sigma)$,
and the quotient of $T\times I$ by $(\tau,\sigma)$ is clearly $W$.

We will perform a similar construction starting with the product $T\times
I\times I$, and using the natural homomorphism $\varphi:\pi_{1}(K)\rightarrow
H_{1}(K;\mathbb{Z}_{2})\cong\mathbb{Z}_{2}\times\mathbb{Z}_{2}$. Only one of
the three surjections $\pi_{1}(K)\rightarrow\mathbb{Z}_{2}$ yields an
orientable double cover, and we will choose the basis of $H_{1}(K;\mathbb{Z}%
_{2})$ so that projection onto each factor yields non-orientable double covers
$K^{\prime}$ and $K^{\prime\prime}$. Let $T$ denote the torus which is the
$4$--fold cover of $K$ corresponding to the kernel of $\varphi$. Thus
$\mathbb{Z}_{2}\times\mathbb{Z}_{2}$ acts freely on $T$ with quotient $K$. It
also acts on $I\times I$ as the group generated by reflections in each factor,
and we let $X$ denote the quotient $2$--orbifold of this action. The
underlying space of $X$ is a disc $D$, whose boundary is divided into two
mirrors and an arc of $\partial X$. The product action on $T\times I\times I$
is free and orientation preserving, so that the quotient of $T\times I\times
I$ by this action is an aspherical orientable $4$--manifold $Z$, and $Z$ has a
natural projection to $X$. The pre-image in $Z$ of each interior point of $X$
is $T$. The pre-image of the corner reflector of $X$ is $K$, and the pre-image
of all other points of one mirror is $K^{\prime}$ and of the other mirror is
$K^{\prime\prime}$. Finally the pre-image of all other points of $\partial X$
is $T$. Further the pre-image of $\partial X$, which is equal to $\partial Z$,
consists of the union of the twisted $I$--bundle over $K^{\prime}$ with
boundary $T$ and the twisted $I$--bundle over $K^{\prime\prime}$ with boundary
$T$, glued along $T$. The pre-image of one mirror is a twisted $I$--bundle
over $K$ with boundary $K^{\prime}$, and the pre-image of the other mirror is
a twisted $I$--bundle over $K$ with boundary $K^{\prime\prime}$. Thus
$(Z,\partial Z)$ is the underlying space of a $PD4$ pair $(G,\partial G)$, and
$\Gamma_{2,3}(G)$ consists of a single $V_{0}$--vertex $v$, so $G=G(v)$, and
$v$ is of $VPC2$--by--Fuchsian type with base orbifold $X$.

\section{Some related questions\label{section:relatedquestions}}

An unsatisfactory part of our work is that there is no algebraic treatment of
the triple $(G(v),\partial_{0}v,\partial_{1}v)$ we discussed.

\begin{problem}
Construct a theory of Poincar\'e triads for groups.
\end{problem}

There is a discussion by Wall in the case of complexes \cite{Wall1}.

In Johannson's Deformation Theorem, he considers a homotopy equivalence
$F:M\rightarrow M^{\prime}$ between two Haken $3$--manifolds with
incompressible boundary. He shows that there is a bijection between the pieces
of the JSJ\ decomposition of $M$ and those of $M^{\prime}$, and that $F$ can
be homotoped to send the pieces of $M$ to the pieces of $M^{\prime}$. In
particular, the splitting annuli and tori of $M$ are sent to splitting annuli
and tori of $M^{\prime}$. For the non-characteristic pieces of $M$, he shows
one can further homotop $F$ to arrange that the intersection with the boundary
of $M$ is mapped to the boundary of the corresponding piece of $M^{\prime}$.
Finally one can arrange that the restriction of $F$ to each non-characteristic
piece is a homeomorphism to the corresponding piece of $M^{\prime}$. It is
natural to ask whether there is an algebraic analogue of this. The natural
analogue would be when one considers two $PD(n+2)$ pairs $(G,\partial G)$ and
$(G^{\prime},\partial G^{\prime})$ with an isomorphism between $G\ $and
$G^{\prime}$. There is a bijection between the underlying graphs of
$\Gamma_{n,n+1}^{c}(G)$ and $\Gamma_{n,n+1}^{c}(G^{\prime})$, and one would
like to know that for a $V_{1}$--vertex $v$ of $\Gamma_{n,n+1}^{c}(G)$, the
part $\partial_{0}v$ of $\partial v$ coming from $\partial G$ can be deformed
into $\partial_{0}v^{\prime}$ of the corresponding $V_{1}$--vertex of
$\Gamma_{n,n+1}^{c}(G^{\prime})$. It seems reasonable this should hold when
$n=1$, but this seems far from clear when $n>1$. The reason is that the proof
of Johannson's Deformation Theorem depends on the non-existence of certain
types of essential annulus in the non-characteristic pieces of $M$. In higher
dimensions the analogous fact would be the non-existence of essential higher
dimensional annuli, but that may not exclude the existence of essential maps
of the $2$--dimensional annulus.

\begin{problem}
In the case of orientable $PD3$ pairs $(G,\partial G)$ and $(G^{\prime
},\partial^{\prime}G)$, with $G$ and $G^{\prime}$ isomorphic, for any $V_{1}%
$--vertex $v$ of $\Gamma_{n,n+1}^{c}(G)$, and the corresponding $V_{1}%
$--vertex $v^{\prime}$ of $\Gamma_{n,n+1}^{c}(G^{\prime})$, show that
$\partial_{0}v$ can be deformed into $\partial_{0}v^{\prime}$.
\end{problem}

In Theorem 8.1 of \cite{BE01}, Bieri and Eckmann proved a result which we have
used several times. Namely that if a $PDn$ pair is split along a $PD(n-1)$
subgroup relative to the boundary, then we again get $PDn$ pairs.

\begin{problem}
Is there an analogue of the Bieri-Eckmann Theorem when a $PDn$ pair is split
along a $PD(n-1)$ pair?
\end{problem}

Examples in dimension $3$ show that if one splits a $3$--manifold with
incompressible boundary along a surface with non-empty boundary, the resulting
manifold may have compressible boundary. Thus if one splits a $PD3$ pair along
a $PD2$ pair, the resulting object need not be a $PD3$ pair. This again seems
to need a theory of $PD$ triples for groups. However, Gitik \cite{Gitik} has
proven an analogue of the Bieri-Eckmann Theorem in the special case when
splitting a $PDn$ pair along a $PD(n-1)$ pair does yield a $PDn$ pair.

A related natural question is:

\begin{problem}
Is there a theory of $PD$ pairs when the maps from the boundary groups are not injective?
\end{problem}

\clearpage

\end{document}